\documentclass[letter,11pt]{amsart}  
\usepackage[T1]{fontenc}
\usepackage[usenames,dvipsnames]{color}
\usepackage{amssymb,amsmath,amsthm,mathrsfs}
\usepackage{graphicx}
\usepackage{mathpazo}
\usepackage{stmaryrd} 
\usepackage{hhline} 
\usepackage{lscape}

\usepackage[all]{xy}

\usepackage[colorlinks=true,linkcolor=Cerulean,pagebackref,citecolor=Mahogany]{hyperref}
\usepackage{enumerate}

\parindent 0mm

\addtolength{\textwidth}{2.4cm}
\addtolength{\textheight}{2cm}
\addtolength{\topmargin}{-2cm}
\topmargin -1.3cm
\addtolength{\evensidemargin}{-1.2cm}
\addtolength{\oddsidemargin}{-1.2cm}
\setlength{\parindent}{0cm}
\addtolength{\parskip}{0.15cm}

 \usepackage{tikz}
\usetikzlibrary{arrows,decorations.pathmorphing,decorations.pathreplacing,positioning,shapes.geometric,shapes.misc,decorations.markings,decorations.fractals,calc,patterns,backgrounds}
\usepackage{tikz-cd}

\tikzset{>=stealth',
     cvertex/.style={circle,draw=black,inner sep=1pt,outer sep=3pt},
     vertex/.style={circle,fill=black,inner sep=1pt,outer sep=3pt},
     star/.style={circle,fill=yellow,inner sep=0.75pt,outer sep=0.75pt},
     tvertex/.style={inner sep=1pt,font=\criptsize},
     gap/.style={inner sep=0.5pt,fill=white}}

\newcommand{\arrowrl}[3][20]
{
\hspace{-5pt}
\begin{tikzpicture}
\node (A) at (0,0) {};
\node (B) at (1,0) {};
\draw[->] ($(A)+(0,0.2)$) -- node [above] {$\scriptstyle f^*$} ($(B)+(0,0.2)$);
\draw [->] ($(B)+(0,0.2)$) -- node [below] {$\scriptstyle f_*$} ($(A)+(0,0.2)$);
\end{tikzpicture}
\hspace{-5pt}
}
\newcommand{\adj}[2][20]{\arrowrl}

\newcommand*\atwo{ \tikz[baseline=(char.base),->,>=stealth',shorten >=1pt,auto,node distance=1.2cm, thick,main node/.style={circle,draw}]{
		\node[shape=circle,draw,inner sep=2pt] (char) {1};
		\node[shape=circle,draw,inner sep=2pt,right of=char] (2) {2};
		\path
		(char)	edge (2);}}

\newcommand*\aTwo{ \tikz[baseline=(char.base),->,>=stealth',shorten >=1pt,auto,node distance=1.2cm, thick,main node/.style={circle,draw}]{
		\node[shape=circle,draw,inner sep=2pt] (char) {1};
		\node[shape=circle,draw,inner sep=2pt,right of=char] (2) {2};
		\path
		(2)	edge (char);}}

\newcommand{\hoehe}{\textcolor{white}{\bigg|}}

\newcommand{\RR}{\ensuremath{\mathbb{R}}}
\newcommand{\ZZ}{\ensuremath{\mathbb{Z}}}
\newcommand{\CC}{\ensuremath{\mathbb{C}}}
\newcommand{\KK}{\ensuremath{K}} 
\newcommand{\QQ}{\ensuremath{\mathbb{Q}}}

\newcommand{\PP}{\ensuremath{\mathbb{P}}}
\newcommand{\TT}{\ensuremath{\mathbb{T}}}



\DeclareMathOperator{\Hom}{Hom}
\DeclareMathOperator{\Ext}{Ext}
\DeclareMathOperator{\End}{End}
\DeclareMathOperator{\add}{add}

\newcommand{\cc}{\mathbb{X}}

\newcommand{\mc}[1]{\ensuremath{\mathcal{#1}}}   

\newcommand{\cA}{\mathcal{A}}
\newcommand{\cF}{\mathbb{F}}

\newcommand{\cQ}{Q}

\newcommand{\cX}{\mathcal{X}}

\newcommand{\p}{\boldsymbol{p}} 
\newcommand{\s}{\boldsymbol{s}} 
\renewcommand{\t}{\boldsymbol{t}} 
\newcommand{\x}{\boldsymbol{x}} 
\newcommand{\y}{\boldsymbol{y}}

\newcommand{\CM}{\operatorname{{MCM}}}

\newcommand{\mmod}[1]{\operatorname{mod}(#1)}

\newcommand{\rot}[1]{{\color{red} #1}}
\newcommand{\grau}[1]{{\color{lightgray} #1}}



\theoremstyle{theorem}
\newtheorem{Thm}{Theorem}[section]         
\newtheorem{Lem}[Thm]{Lemma}

\newtheorem{Prop}[Thm]{Proposition}   

\newtheorem{Qu}[Thm]{Question}

\theoremstyle{remark}
\newtheorem{Rmk}[Thm]{Remark}
\newtheorem{Ex}[Thm]{Example}

\theoremstyle{definition}
\newtheorem{defi}[Thm]{Definition} 

\newtheorem{fact}[Thm]{Facts}

\setcounter{section}{0}
\setcounter{tocdepth}{2}

\title{Frieze patterns in representation theory}

\author{Eleonore Faber}

\address{Institut f\"ur Mathematik und Wissenschaftliches Rechnen,
Universit\"at Graz,
Heinrichstr.~36,
A-8010 Graz, Austria and School of Mathematics, University of Leeds, LS2 9JT Leeds, UK}

\email{eleonore.faber@uni-graz.at, e.m.faber@leeds.ac.uk}

\date{\today}
\thanks{ Work of E.F.~was supported by EPSRC grant EP/W007509/1 and by NAWI Graz.
 } 
 
\keywords{frieze patterns, triangulations of polygons, Grassmannian cluster categories, infinite cluster structure}
\begin{document}

\subjclass[2020]{05E10 
13F60, 
13C14 
14M15 
 16G20, 
18G99 
} 

\maketitle

\begin{abstract}
Friezes patterns are infinite arrays of numbers, in which every four neighbouring vertices arranged in a diamond satisfy the same arithmetic rule. Introduced in the late 1960s by Coxeter, and further studied by Conway and Coxeter in their remarkable papers from $1973$, this topic has been nearly forgotten for over thirty years. But since the discovery of connections to cluster algebras and categories of type $A$, interest in friezes has exploded, several generalizations have been studied, and links to geometry and combinatorics have been explored.

In this article we will review  some of the most striking results connecting the purely combinatorially defined friezes with triangulations of polygons, Grassmannian cluster  algebras and (Grassmannian) cluster categories. Then we will focus  on Grassmannian cluster categories and some recent  results linking them to friezes.

\end{abstract}


\section{Introduction}

The goal of this article is to survey results on frieze patterns\footnote{Sometimes these are also denoted as \emph{friezes}. We will use frieze patterns and friezes interchangeably.} from the point of view of representation theory, based on my talk from the International Conference on Representations of Algebras (ICRA) 2024 in Shanghai. The main idea that I wanted to get across is that frieze patterns appear quite naturally in connection with (Grassmannian) cluster categories and that this yields interesting interactions with other fields, such as algebraic geometry and commutative algebra. \\
While  we will review some of the ``classical'' correspondence between triangulations of polygons, cluster algebras and categories of type $A$ and Conway--Coxeter frieze patterns, the focus is on Grassmannian cluster categories. In particular, we show how to obtain the Conway--Coxeter friezes from them  (including the boundary rows), and then survey recent results on the construction of tame integral $SL_k$-friezes and infinite friezes from Grassmannian cluster categories of infinite rank. 

What is in this survey: In Section \ref{Sec:friezes} we will introduce the main object of our studies, mathematical frieze patterns. Therefore, we roughly follow the historical trajectory, starting with Coxeter's and Conway--Coxeter's seminal work on combinatorics of frieze patterns from the 1970s. The topic then long lay dormant, until the introduction of cluster algebras and connections to cluster categories in the 2000s. Since then research on friezes has exploded. \\
We start  by describing some types of frieze patterns purely combinatorially and some of their properties that will be useful from a more representation theoretic point of view later: Conway--Coxeter friezes (see Section \ref{Sub:CCfrieze}), and their generalizations (see Section \ref{Sub:general}), i.e., tame $SL_k$-friezes, friezes with coefficients, and infinite periodic friezes. And we also give an outlook on friezes obtained by varying the defining diamond rule. 

In Section  \ref{Sec:cluster-algs-cats}, we provide the basics on cluster algebras and categories leading to the cluster character, which is the link to frieze patterns. Let us stress here that we focus on the category side of things\footnote{as customary for a talk at ICRA} and less on the algebra properties. So we define cluster algebras only very generally (although explaining the example of the homogeneous coordinate ring of Grassmannians), but then go in more detail about categories: first we recall the classical construction of the cluster category associated to acyclic quivers, following Buan, Marsh, Reineke, Reiten, and Todorov, dating back to 2006\footnote{this is a triangulated category, can be thought of as ``without coefficients''.}. Then we consider Jensen, King, and Su's additive categorification of Grassmannian cluster algebras from 2016\footnote{this is a Frobenius category, can be thought of ``with coefficients''.}.

In the last part, Section \ref{Sec:friezes-cat} we explain how to obtain some of the combinatorial frieze types discussed in Section \ref{Sec:friezes} from Grassmannian cluster categories: we define the cluster character and directly prove that it yields a $1-1$-correspondence between Conway--Coxeter friezes and cluster tilting objects in the Grassmannian cluster category $\mc{C}(2,n)$, see Prop.~\ref{Prop:frieze-from-CC}. Combining this with the combinatorial model discussed in Section \ref{Sec:friezes}, we obtain the correspondences summarized in the table in Fig.~\ref{Fig:Correspondence}. \\
In Section \ref{Sec:Pluecker} we survey another application of Grassmannian cluster categories: tame integral $SL_k$-friezes obtained from the so-called Pl\"ucker frieze and categorical interpretations of these. In the following Section \ref{Sub:infinite} we give an overview over infinite Grassmannian cluster structures and draw an analogous table in Fig.~\ref{Fig:Correspondence-infinite}. However, the frieze equivalent in this correspondence table is still missing, and thus at the end we will discuss recent joint work with Esentepe about infinite frieze structures coming from Grassmannian cluster categories of infinite rank. Both sections end with some questions that  might be motivation for further reaching research. \\
Finally, in Section \ref{Sub:other-repthy} there is an outlook to even more constructions of frieze patterns coming from representation theory with  further references.

Before we dive into frieze patterns, a note of warning, that this survey is obviously very biased, reflecting the author's subjective taste! Nonetheless we tried to include as many references and research directions as we are aware of (and time constraints allowed)\footnote{ The intended audience for the talk was mainly people working on (finite dimensional) representations of algebras. However, we hope that the present paper appeals to a wider audience and may serve as an invitation to venture into this area.}. \\
There have been several surveys on friezes more or less recently, all focussing on different aspects of the theory: Morier-Genoud's paper \cite{Crossroads} excellently captures the state of the art from 2015, Pressland's lecture notes give a categorical introduction to Conway--Coxeter-friezes \cite{Pressland-survey}, and Baur \cite{BaurIsfahan} an introduction to Grassmannian cluster categories. Moreover, there is an online resource maintained by Felikson \cite{Felikson-frieze-portal} for material on friezes.

\section{Number friezes} \label{Sec:friezes}

Frieze patterns have not only appeared in art as \emph{friezes} (i.e., broad decorative bands) but also in the study of plane symmetries. There are $7$ \emph{frieze groups}, where a frieze group is defined as a  symmetry of the plane whose subgroup of translations is isomorphic to $\ZZ$, see e.g. \cite[Section 3.7]{Coxeter-Geometry}.

\subsection{Conway--Coxeter friezes} \label{Sub:CCfrieze}

Numeric frieze patterns first seem to appear in Coxeter's paper \cite{Coxeter} in connection with Gauss' pentagramma mirificum. Already in that paper, the main properties of friezes are proven (periodicity in the horizontal direction, the coordinate system, and connections to continued fractions). It is worth to point out that there is a fascinating account about friezes and their connection to three-dimensional polytopes in \cite{Coxeterbook}, where also what is now called \emph{friezes with coefficients} are considered. 

Following Coxeter, the easiest way to introduce friezes, is to write one down, for example, see Fig.~\ref{Fig:glide-coords}.
\begin{figure}[h!]
{\small
 \xymatrix@=0.4em{ 
 &\ldots && 0 && 0 && 0 && 0 && 0 && 0 && 0 && 0 && 0 && 0 && 0 && 0 && 0 & \ldots &\\
   &\ldots & \rot{1} && \rot{1}&& \rot{1} && \rot{1} && \rot{1} && 1 && 1 && 1 && 1 && 1 && 1 && 1 && 1 & \ldots &\\
  &\ldots && \rot{1} && \rot{2} && \rot{2} && \rot{2} &&  1  && 4 &&  1 && 2 && 2 && 2 &&  1  && 4 && 1 & \ldots& \\
  &\ldots & 3 && \rot{1} && \rot{3} &&\rot{3} &&  1  && 3 &&  3 && 1&&  3 && 3 && 1 && 3 && 3    & \ldots& \\
 &\ldots && 2 && \rot{1} && \rot{4} && 1 &&  2  && 2 && 2 && 1 && 4 && 1 &&  2  && 2 && 2 &   \ldots& \\
   &\ldots & 1 && 1 && \rot{1} && 1 && 1 && 1 && 1 &&  1 && 1 && 1 && 1 && 1 && 1  & \ldots & \\
    &\ldots && 0 && 0 && 0 && 0 && 0 && 0 && 0 && 0 && 0 && 0 && 0 && 0 && 0 & \ldots &\\
  }}
  \caption{A Conway--Coxeter frieze, where the fundamental domain (see Fact \ref{Fact:CC-frieze} (2)) is coloured in red.} \label{Fig:glide-coords}
\end{figure}

In Fig.~\ref{Fig:glide-coords} all the entries of this array are positive integers and upon closer inspection one sees that  any four numbers arranged in a diamond 
\begin{align} \label{Eq:diamond}
    \xymatrix@=0.3em{
        &b&\\
        a&&d\\
        &c&
    } 
\end{align}
satisfy 
\[ ad-bc=1 \ . \]

\begin{defi} \label{Def:frieze}
A \textit{finite frieze pattern} (sometimes a \emph{closed frieze}) is a grid of numbers $m_{ij} \neq 0$ in a commutative ring $R$ with $1$  consisting of finitely many rows as follows: the top and the bottom rows are infinite rows of zeroes and the second to top and second to bottom rows are infinite rows of ones. All the other entries $m_{ij}$ are nonzero and satisfy the \textit{frieze rule} or \textit{unimodular rule} \eqref{Eq:diamond}. Thus, a closed frieze looks like this:
\begin{align} \label{Eq:CCfrieze}
    \xymatrix@=0.3em{
    \ldots && 0 && 0 && 0 && 0 &&   \ldots \\
    \ldots & 1 && 1 && 1 && 1 && 1 & \ldots \\
    \ldots && m_{0,2} && m_{1,3} && m_{2,4} && m_{3,5} && \ldots \\
    \ldots & m_{-1,2} && m_{0,3} && m_{1,4} && m_{2,5} && m_{3,6} & \ldots \\
    \ldots && \ddots && \ddots && \ddots && \ddots &&   \ldots \\
    \ldots & m_{-2,w-1} && m_{-1,w}&& m_{0,w+1} && m_{1,w+2} && m_{2,w+3} & \ldots \\
    \ldots && 1 && 1 && 1 && 1 &&   \ldots \\
    \ldots & 0 && 0 && 0 && 0 && 0 & \ldots
    }
\end{align}

The number $w$ of nontrivial rows is called the \emph{width} of the frieze. The first nontrivial row, that is, the row of $m_{i,j}$ with $j-i = 2$ above, is called the \textit{quiddity sequence}. \\ 
If all entries $m_{ij}$ for $i,j \in \ZZ$ are positive integers, then we call the frieze pattern \eqref{Eq:CCfrieze} a \emph{closed integral frieze} or \emph{Conway--Coxeter (=CC) frieze}. Further, if $w= \infty$, that is, there is no lower boundary, then we speak of \emph{infinite friezes}.
\end{defi}

Let us note the following 
\begin{fact} \label{Fact:CC-frieze}
\begin{enumerate}
\item A CC-frieze of width $w < \infty$ is periodic in the horizontal direction with period $n=w+3$, see \cite[Section 6]{Coxeter}. The quiddity sequence $\{ a_i\}_{i=1}^n$ uniquely determines the frieze. Alternatively, a diagonal $m_{i,i+j}$ for $j=2, \ldots, w+1=n-2$ in the frieze determines the remaining entries. One can show that in terms of the quiddity sequence, also already the first $n-2$ entries $a_1, \ldots, a_{w+1}$ determine the frieze uniquely, see e.g. \cite[Lemma 4.1]{FSLotus}.
\item A CC-frieze has a glide symmetry, as can be seen in Fig.~\ref{Fig:glide-coords}: it is invariant under a glide reflection along the horizontal median line of the pattern, see \cite{Coxeter}. Thus friezes consist of a fundamental domain, which can be chosen triangular as the colored entries in the Figure. 
\item  The entries of the frieze can be written as Pl\"ucker coordinates $p_{ij}$ of the Grassmannian $Gr(2,n)$ (see more on this in Example \ref{Ex:Gr(2,n)})\footnote{Coxeter developed the ``coordinate system'' $(i,j)$ for the entries of a frieze in \cite{Coxeter}. In order to save space and to be suggestive for later, we will stay with $m_{ij}$.}. Here note that the same rules as for Pl\"ucker coordinates hold, i.e., we may always assume that $i <j $ and further set $m_{ii}=0$, $m_{i,i+1}=1$.

\end{enumerate}
\end{fact}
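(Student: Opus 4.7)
The plan is to convert the local diamond rule into a continuant recurrence and exploit its matricial form. In the coordinates of \eqref{Eq:CCfrieze}, the rule reads $m_{i,j}\, m_{i+1,j+1} - m_{i+1,j}\, m_{i,j+1} = 1$. First I would establish by induction on $k\ge 1$ the continuant formula $m_{i,\,i+k} = K(a_i,\ldots,a_{i+k-2})$, where $K$ is the Euler continuant defined by $K()=1$, $K(a)=a$, and $K(a_1,\ldots,a_r)=a_r K(a_1,\ldots,a_{r-1})-K(a_1,\ldots,a_{r-2})$. The inductive step rearranges the diamond rule so that the division is by $m_{i,i+1}=1$ rather than a variable entry. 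This formula settles the first two determination claims in (1): the quiddity sequence visibly determines every entry, and reading it as a three-term recurrence in $k$ also shows that any full diagonal $m_{i,i+j}$, $j=2,\ldots,w+1$, suffices.

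For horizontal periodicity, I would package the continuants into the matrix product
\begin{equation*}
M(a_1,\ldots,a_r) \;=\; \prod_{s=1}^{r}\begin{pmatrix} a_s & -1 \\ 1 & 0 \end{pmatrix} \;=\; \begin{pmatrix} K(a_1,\ldots,a_r) & -K(a_1,\ldots,a_{r-1}) \\ K(a_2,\ldots,a_r) & -K(a_2,\ldots,a_{r-1}) \end{pmatrix}.
\end{equation*}
The boundary conditions $m_{i,i+w+2}=1$ and $m_{i,i+w+3}=0$, together with their row-shifted versions, translate into $M(a_i,\ldots,a_{i+n-1})=-I$ with $n=w+3$. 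Applying this closing identity for two overlapping windows $[i,i+n-1]$ and $[i+1,i+n]$ and cancelling the leading factor forces $a_{i+n}=a_i$, and the continuant formula then propagates this to $m_{i+n,j+n}=m_{i,j}$. That only the first $n-2$ quiddity entries are needed follows because the closing equations are linear in each $a$ with nonzero leading coefficient, uniquely pinning down $a_{w+2}$ and $a_{w+3}$ from $a_1,\ldots,a_{w+1}$. The most delicate step, in my view, is the sign normalisation giving $M=-I$ exactly (rather than $\pm I$), which is classical \cite{Coxeter}.

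For part (2), the glide symmetry follows from the palindromic identity $K(a_1,\ldots,a_r)=K(a_r,\ldots,a_1)$ combined with the periodicity just established, and yields $m_{i,j}=m_{n-j,n-i}$ after the natural shift. To identify the entries with Pl\"ucker coordinates of $Gr(2,n)$, I would build a $2\times n$ matrix whose $k$-th column is $v_k=M(a_1,\ldots,a_{k-1})(1,0)^{t}$; the block form of $M$ gives $\det(v_i\,|\,v_j)=K(a_i,\ldots,a_{j-2})=m_{ij}$, so the $2\times 2$ minors of this matrix, which automatically satisfy all three-term Pl\"ucker relations and the normalisation $p_{i,i+1}=1$, coincide with the frieze entries. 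The diamond rule $m_{i,j}m_{i+1,j+1}-m_{i+1,j}m_{i,j+1}=1$ is then recovered as the Pl\"ucker relation on four consecutive indices with $p_{i,i+1}=1$.
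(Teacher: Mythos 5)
Your part (1) is sound, and it is essentially the classical argument behind the references the paper cites (\cite{Coxeter}, and \cite{FSLotus} for the last claim): continuants along diagonals, the $2\times 2$ matrix product, the closing relation $M(a_i,\ldots,a_{i+n-1})=-I$, overlapping windows forcing $a_{i+n}=a_i$, and the two closing equations determining $a_{w+2},a_{w+3}$ from $a_1,\ldots,a_{w+1}$ because the relevant leading coefficients are themselves frieze entries equal to $1$. (The sign is less delicate than you suggest: once the two boundary conditions on a diagonal give continuant values $1$ and $0$, the recurrence forces the top-left entry of the full window to be $a\cdot 0-1=-1$, so $-I$ comes out automatically.) The Pl\"ucker identification at the end is also correct: with $v_k=M(a_1,\ldots,a_{k-1})(1,0)^t$ one gets $\det(v_i\,|\,v_j)=m_{i,j}$, consecutive minors equal to $1$, and the diamond rule as a three-term Pl\"ucker relation.

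The genuine gap is the glide symmetry. The palindromic identity $K(a_1,\ldots,a_r)=K(a_r,\ldots,a_1)$ combined with periodicity does not yield any symmetry of a single frieze: it compares a frieze with the different frieze whose quiddity cycle is reversed. Moreover, the identity you assert, $m_{i,j}=m_{n-j,n-i}$, preserves the row index $j-i$, so it is a reflection in a vertical axis, not a glide reflection across the horizontal median, and it is false in general: for the pentagon triangulation with quiddity $(3,1,2,2,1)$ one has $m_{0,2}=a_1=3$ but $m_{3,5}=a_4=2$. The correct statement is $m_{i,j}=m_{j,i+n}$ (reflect across the median line and shift by half a period), and it follows from ingredients already in your write-up: since $M(a_1,\ldots,a_n)=-I$, your vectors satisfy $v_{k+n}=-v_k$, hence $m_{j,i+n}=\det(v_j\,|\,v_{i+n})=-\det(v_j\,|\,v_i)=\det(v_i\,|\,v_j)=m_{i,j}$. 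This identity is also exactly what makes the labelling of frieze entries by unordered pairs $\{i,j\}$ modulo $n$, i.e.\ by Pl\"ucker coordinates $p_{ij}$ of $Gr(2,n)$ with $m_{ii}=0$, $m_{i,i+1}=1$, well defined, so it should be proved before (or together with) the Pl\"ucker identification rather than deduced from palindromicity.
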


Conway--Coxeter friezes were studied by Conway and Coxeter in their remarkable papers \cite{CoCo1, CoCo2}\footnote{These are written in an intriguing way: the first article is a sequence of 34 questions, and the second one consists of the answers of these questions. For a more traditional account of their work see e.g. \cite{Soizic-Henry}.}. They prove a one-to-one correspondence between finite integral friezes and triangulations of finite polygons:
\begin{Thm}[Conway--Coxeter \cite{CoCo1, CoCo2}]
	\label{Thm:CoCo}
There is a bijection between triangulated polygons with $m$ vertices and Conway--Coxeter frieze patterns of width $w=m-3$: 
\begin{enumerate}
\item Let $\mathcal{F}$ be a Conway--Coxeter frieze of width $w$ with the entries $m_{ij}$ as in \eqref{Eq:CCfrieze}. Then the entries $a_i:=m_{i-1,i+1}$ for $ i \in \{ 1, \ldots, m \} $
of $\mathcal{F}$ give a triangulation of an $(w+3)$-gon $P$ with vertices $1, \ldots, w+3$: the integer $a_i$ denotes the number of triangles of the triangulation incident to vertex $i$.
\item Conversely, let $P$ be a triangulated $w+3$-gon. Define the sequence $\{a_i\}_{i=1}^{w+3}$ with $a_i$ as the number of triangles incident  to the vertex $i$. Then the frieze of width $w$ corresponding to $P$ is the one with quiddity sequence $ \{ a_i \}_{i=1}^{w+3} $.
The remaining entries in the frieze can be calculated by the frieze rule and the horizontal periodicity.
\end{enumerate}
\end{Thm}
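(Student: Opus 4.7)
The plan is to prove the bijection by induction on $n := w+3$, using the combinatorial ``ear'' operation on triangulations and an analogous reduction on friezes. Recall that an \emph{ear} of a triangulated polygon is a vertex $v$ with exactly one incident triangle; a simple argument on the tree structure of the dual graph shows that every triangulation of an $n$-gon with $n \geq 4$ has at least two ears. Under the assignment $a_v = $ (number of triangles at $v$), ears correspond precisely to vertices with $a_v = 1$, so the strategy is to match ear-removal on the triangulation side with a ``column collapse'' on the frieze side.

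For direction (2), I would proceed by induction on $n$. The base $n = 3$ is trivial: the unique triangulation of a triangle has quiddity sequence $(1,1,1)$ and yields a frieze of width $0$ consisting only of boundary rows. For the inductive step, pick an ear $v$ of a triangulated $n$-gon $P$, with polygon-neighbours $u, w$; removing $v$ yields a triangulated $(n-1)$-gon $P'$ whose quiddity sequence is obtained from $P$'s by deleting $a_v = 1$ and decreasing both $a_u$ and $a_w$ by one. By the inductive hypothesis $P'$ yields a valid CC-frieze $\mc{F}'$ of width $w-1$. I would then construct the frieze $\mc{F}$ of width $w$ for $P$ by inserting a new column into $\mc{F}'$ and propagating via the frieze rule $ad - bc = 1$, checking both that the resulting entries are positive integers (the crucial case amounts to an arithmetic identity using $a_u, a_w$ from the old frieze) and that the row of $1$'s closes up at the correct depth $w+1$.

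For direction (1), the key lemma is that every CC-frieze of finite width contains at least one entry equal to $1$ in its quiddity sequence. Granted this, if $a_i = 1$, then deleting the $i$-th column of the frieze and decreasing $a_{i-1}, a_{i+1}$ by one produces, after checking via the frieze rule that all entries remain positive integers, a CC-frieze of width $w - 1$. By induction this corresponds to a triangulated $(n-1)$-gon, and gluing an ear back at position $i$ gives a triangulation of the $n$-gon whose quiddity sequence reproduces the original.

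The main obstacle is proving this key lemma on the existence of a $1$ in any CC-quiddity sequence. The cleanest route uses the matrix identity equivalent to the closing of a frieze,
\[
M(a_1) M(a_2) \cdots M(a_n) = -I \quad\text{in } \mathrm{SL}_2(\ZZ), \qquad M(a) := \begin{pmatrix} a & -1 \\ 1 & 0 \end{pmatrix},
\]
which follows from the two-term recursion on diagonals together with the bottom row of $1$'s closing at the correct depth. If all $a_i \geq 2$, one shows via the continued-fraction interpretation (or by comparing entries and traces of partial products) that no such identity in $\mathrm{SL}_2(\ZZ)$ can hold, forcing some $a_i = 1$. With the lemma in hand, the two inductive constructions combine into the claimed bijection, and the width statement $w = n-3$ is automatic from the fact that each ear-removal step decreases both the polygon size and the frieze width by exactly one.
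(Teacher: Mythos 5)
The paper does not prove this theorem at all: it is quoted as a classical result of Conway and Coxeter \cite{CoCo1, CoCo2}, so there is no in-paper argument to compare against. Your proposal is the standard inductive ``ear-removal'' proof and is correct in outline. Two remarks on the places you leave as checks. First, your key lemma admits a more elementary proof than the $\mathrm{SL}_2(\ZZ)$ identity: the frieze rule gives the diagonal recursion $m_{i,j+1}=a_j\,m_{i,j}-m_{i,j-1}$, so if every $a_j\geq 2$ then $m_{i,j+1}-m_{i,j}\geq m_{i,j}-m_{i,j-1}\geq\cdots\geq a_{i+1}-1\geq 1$, i.e.\ each diagonal is strictly increasing and can never return to the bottom row of $1$'s; this avoids the continued-fraction/trace argument entirely (and your matrix route, while workable, needs the non-obvious fact that a product of $M(a_i)$ with all $a_i\geq 2$ is never $\pm I$, which itself is essentially the same monotonicity statement about continuants). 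Second, the collapse step in direction (1) is where the real content sits: you should note that two adjacent quiddity entries cannot both be $1$ when $w\geq 1$ (else the entry below them would be $0$), so the decremented neighbours stay positive, and the cleanest way to see that the collapsed array is again a frieze is the Ptolemy-type relation $m_{k,i}=m_{k,i-1}+m_{k,i+1}$ valid whenever $a_i=m_{i-1,i+1}=1$, which also gives positivity for free in the insertion step of direction (2) (the new diagonal consists of sums of old entries, so no separate arithmetic identity is needed). With those two points filled in, your induction closes correctly, and the width bookkeeping $w=n-3$ is as you say automatic.
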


\begin{Rmk}  \label{Rmk:1-in-frieze}
It was noted by Conway and Coxeter \cite[Question 32]{CoCo1} that the entries $m_{ij}=1$ in the frieze correspond precisely to the diagonals $(i,j)$ in the triangulation.
\end{Rmk}

\begin{Ex} Let us give an example for the Conway--Coxeter bijection: let the number of vertices of the polygon be $m=6$, that is, we consider a triangulated hexagon. Then the theorem says that any triangulation of a hexagon is in $1-1$ correspondence with a CC-frieze of width $3$. There are precisely $14$ triangulations of a hexagon (the number of triangulations of an $m$-gon is given by $C_{m-2}=\frac{1}{m-1} { 2m-4 \choose m-2 }$, the $(m-2)$-nd Catalan number. We write down two of the friezes and their corresponding triangulations:
\begin{flushleft}
\begin{minipage}{0.2 \textwidth}\centering
\begin{tikzpicture}[scale=1.5]

\foreach \i in {1,...,6}{
  \coordinate (A\i) at (60*\i:1);
}

\draw[thick] (A1)--(A2)--(A3)--(A4)--(A5)--(A6)--cycle;

\draw[thick,color=red] (A1)--(A3);
\draw[thick, color=red] (A3)--(A5);
\draw[thick, color=red] (A3)--(A6);

        \fill (A2) circle (1pt) node[shift={(300:-0.3)}] {$1$};
\fill (A6) circle (1pt) node[shift={(0:0.25)}] {$2$};
    \fill (A1) circle (1pt) node[shift={(60:0.25)}] {$2$};
     \fill (A5) circle (1pt) node[shift={(120:-0.25)}] {$2$};
   \fill (A4) circle (1pt) node[shift={(180:0.3)}] {$1$};
   \fill (A3) circle (1pt) node[shift={(240:0.3)}] {$4$};

\end{tikzpicture}
\end{minipage}%
\begin{minipage}{0.2 \textwidth}\centering
$\phantom{aaaa}\xrightarrow{ \phantom{aa}}$
\end{minipage}%
\begin{minipage}{0.5 \textwidth} \centering
{\tiny
 \xymatrix@=0.5em{ 
 &\ldots && 0 && 0 && 0 && 0 && 0 && 0 && \ldots &\\
   &\ldots & 1 && 1 && 1 && 1 && 1 && 1 && 1 &  \ldots &\\
  &\ldots && 1 && 2 && 2 && 2 &&  1  && 4 &&  \ldots& \\
  &\ldots & 3 && 1 && 3 && 3 &&  1  && 3 && 3 & \ldots& \\
 &\ldots && 2 && 1 && 4 && 1 &&  2  && 2 &&  \ldots& \\
   &\ldots & 1 && 1 && 1 && 1 && 1 && 1 && 1 &   \ldots & \\
    &\ldots && 0 && 0 && 0 && 0 && 0 && 0 && \ldots &\\
  }
}
\end{minipage}
\end{flushleft}

\begin{flushleft}
\begin{minipage}{0.2 \textwidth}\centering
\begin{tikzpicture}[scale=1.5]

\foreach \i in {1,...,6}{
  \coordinate (A\i) at (60*\i:1);
}

\draw[thick] (A1)--(A2)--(A3)--(A4)--(A5)--(A6)--cycle;

\draw[thick,color=red] (A1)--(A3);
\draw[thick, color=red] (A3)--(A5);
\draw[thick, color=red] (A5)--(A1);

        \fill (A2) circle (1pt) node[shift={(300:-0.3)}] {$1$};
\fill (A6) circle (1pt) node[shift={(0:0.25)}] {$1$};
    \fill (A1) circle (1pt) node[shift={(60:0.25)}] {$3$};
     \fill (A5) circle (1pt) node[shift={(120:-0.25)}] {$3$};
   \fill (A4) circle (1pt) node[shift={(180:0.3)}] {$1$};
   \fill (A3) circle (1pt) node[shift={(240:0.3)}] {$3$};

\end{tikzpicture}
\end{minipage}%
\begin{minipage}{0.2 \textwidth}\centering
$\phantom{aaaa}\xrightarrow{ \phantom{aa}}$
\end{minipage}%
\begin{minipage}{0.5 \textwidth} \centering
{\tiny
 \xymatrix@=0.5em{ 
 &\ldots && 0 && 0 && 0 && 0 && 0 && 0 && \ldots &\\
   &\ldots & 1 && 1 && 1 && 1 && 1 && 1 && 1 &  \ldots &\\
  &\ldots && 1 && 3 && 1 && 3 &&  1  && 3 &&  \ldots& \\
   &\ldots & 2 && 2 && 2 && 2  && 2 && 2 && 2 &  \ldots &\\
 &\ldots && 3 && 1 && 3 && 1 &&  3  && 1 &&  \ldots& \\
   &\ldots & 1 && 1 && 1 && 1 && 1 && 1 && 1 &   \ldots & \\
    &\ldots && 0 && 0 && 0 && 0 && 0 && 0 && \ldots &\\
  }
}
\end{minipage}
\end{flushleft}
\end{Ex}

\subsection{Some generalizations}  \label{Sub:general}
There are various natural generalizations of closed friezes as defined in Def.~\ref{Def:frieze}. We list here a few which will later be considered from a representation theoretic point of view. This list is not exhaustive but gives a first flavour of different frieze constructions.

\subsubsection*{Infinite periodic friezes} The first generalization has already appeared in \cite{CoCo1} in Question 16: an integral frieze of infinite width. In this example, the quiddity sequence is still periodic. There have been some studies about periodic integral friezes, see \cite{KarinSurvey} for a recent account: Tschabold has shown that any periodic infinite frieze of period $n$ comes from a triangulation of a punctured disc with $n$ marked points on the boundary \cite[Theorem 3.6]{Tschabold} or alternatively, it comes from a triangulation of the annulus \cite[Theorem 4.6]{BPT}. Let us note that for infinite friezes one can also define a \emph{growth coefficient} of the entries, and there are some results on the growth of infinite friezes, e.g. in \cite{BaurFellnerParsonsTschabold}. \\ 
 In Section \ref{Sub:infinite} we consider infinite friezes with non-periodic quiddity sequences.

\subsubsection*{Friezes with coefficients} This generalization has also already been considered by Coxeter \cite[Section 5.1]{Coxeterbook} but also was featured prominently in Propp's rich article \cite{Propp}. The idea is the following: from the combinatorial model for a CC-frieze, i.e., the entries of the frieze correspond to matching numbers in a triangulated polygon, we have that all the boundary edges of said polygon have value (or weight) $1$. In the coordinate system \eqref{Eq:CCfrieze} this means that $m_{i,i+1}=1$ for any $i$. But one can also vary the weights of the boundary edges (cf.~\cite[Def.~2.1]{CuntzHolmJorgensen}): 
\begin{defi} \label{Def:frieze-coeffs}
Let $R$ be an integral domain 
and let $w \in \ZZ_{\geq 0}$. A \emph{frieze with coefficients} $\{c_{i,i+1}\}_{i \in \ZZ}$ of width $w$ is an array that is infinite in both horizontal directions
\begin{align} 
    \xymatrix@=0.3em{
    \ldots && m_{1,1} && m_{2,2} && m_{3,3} && m_{4,4} &&   \ldots \\
    \ldots & m_{0,1} && m_{1,2} && m_{2,3} && m_{3,4} && m_{4,5} & \ldots \\
    \ldots && m_{0,2} && m_{1,3} && m_{2,4} && m_{3,5} && \ldots \\
    \ldots & m_{-1,2} && m_{0,3} && m_{1,4} && m_{2,5} && m_{3,6} & \ldots \\
    \ldots && \ddots && \ddots && \ddots && \ddots &&   \ldots \\
    \ldots & m_{-2,w-1} && m_{-1,w}&& m_{0,w+1} && m_{1,w+2} && m_{2,w+3} & \ldots \\
    \ldots && m_{-2,w} && m_{-1,w+1} && m_{0,w+2} && m_{1,w+3} &&   \ldots \\
    \ldots & m_{-3,w} && m_{-2,w+1} && m_{-1,w+2} && m_{0,w+3} && m_{1,w+4} & \ldots
    }
\end{align}
such that all the conditions
\begin{align} \label{Eq:coeff1} m_{i,i}=m_{i,i+w+3}=0 \ ,  && m_{i,i+1}=c_{i,i+1}\neq 0  \in R \ , \\
\label{Eq:coeff2}  m_{i,j}  \in R \text{ for } i<j<i+w+4\  , &&  \\
\label{Eq:frieze-coeff}   m_{i,j}m_{i+1,j+1}-m_{i+1,j}m_{i,j+1}=m_{i,i+1}m_{j+1,w+j+4}\  , &&
\end{align}
are satisfied. \end{defi}
Note that conditions \eqref{Eq:coeff1}, \eqref{Eq:coeff2}, and \eqref{Eq:frieze-coeff} imply (similar as in \cite[Remark 2.2 (2)]{CuntzHolmJorgensen}) that
$$c_{i,i+1}=m_{i,i+1}=m_{i+1,w+i+4} \ , $$
that is, the frieze is periodic in the horizontal direction and the frieze rule \eqref{Eq:frieze-coeff} can also be written as
$$m_{i,j}m_{i+1,j+1}-m_{i+1,j}m_{i,j+1}=c_{i,i+1}c_{j,j+1} \ . $$

Note that in the paper by Cuntz, Holm, and J{\o}rgensen these friezes are studied from a combinatorial point of view. Similar ideas have also been considered by Fontaine for friezes with integral coefficients \cite{Fontaine}.

\subsubsection*{$SL_k$-friezes and $SL_k$-tilings} These have first appeared $1972$ in work of Cordes and Roselle \cite{CordesRoselle} (with extra conditions on the minors of the first $k \times k$-diamonds) and seem to not have been further researched until the introduction of cluster algebras. In particular, $SL_3$-friezes were discussed (in the guise of $2$-friezes) in \cite[Section 6]{Propp}, in \cite{BergeronReutenauer} $SL_k$-tilings were considered, and  $SL_k$-friezes are studied more systematically in \cite{MGOT12, mg12, mgost14, Cuntz}. Here for the first time we need an extra condition on $(k+1) \times (k+1)$ determinants in the patterns to make sure that the patterns have a controllable behaviour.

\begin{defi}
An \emph{$SL_k$-frieze} over an integral domain $R$ is an array of numbers with finitely many rows:  
\begin{equation} \label{Eq:SLkfrieze}
\xymatrix@C=.001em@R=.05em{
\dots & &  0 && 0 && 0 && 0 && 0 && \dots  \\ 
&\ddots & & \ddots  &   & \ddots  & & \ddots && \ddots && \ddots && \\ 
\dots&  0 && 0 && 0 && 0 && 0 && 0 &\dots  \\
\dots & & 1 && 1 && 1 && 1 && 1 &&  \dots \\
\dots &   && m_{-1,-1} &&m_{00} && m_{11} &&  m_{22} && &\dots \\
 & &   & \ddots& \ddots && \ddots && \ddots \\
  \dots  && m_{-2,w-4}  && m_{-1,w-3} && m_{0,w-2} && m_{1,w-1} && m_{2,w} && \dots \\
   &   && m_{-2,w-3} && m_{-1,w-2} && m_{0,w-1} && m_{1,w}  &&  \dots\\
   \dots && 1 && 1 && 1 && 1 && 1 && \cdots && \\ 
\dots &   0 && 0 && 0 && 0 && 0 && 0 & \dots&   \\
&&  \ddots &   & \ddots  && \ddots && \ddots &&   \\
\dots && 0 && 0 & &  0 &&  0 && 0 && \dots &&  &&  \\
}
\end{equation}
The first $k-1$ rows consist of $0$'s followed by a row of $1$'s from top and inversely at the bottom,
and of $w \geq 1$ rows of elements $m_{ij}\in R$ in between, such that every $k\times k$-diamond of entries of the frieze has determinant $1$. The integer $w$ is again called the \emph{width} of the frieze.\footnote{Similar to \cite[Section 3]{Crossroads}, we chose a slightly different labeling convention than in Definition \ref{Def:frieze}: the entries form a grid as before, but here they are not directly indexed by Pl\"ucker coordinates of some $Gr(2,n)$. } \\
The $SL_k$-frieze is called \emph{tame} if all $(k+1)\times (k+1)$-diamonds have determinant $0$. In general, we define an $l \times l$-diamond of the frieze, where $1 \leq l \leq k+1$, as the matrix having $l$ successive entries of the frieze on its diagonals and all other entries above and below the diagonal accordingly. Finally, if all non-trivial entries $m_{ij}$ with $0 \leq i,j \leq w$ are positive integers, we call the pattern \eqref{Eq:SLkfrieze} an \emph{integral} 
$SL_k$-frieze. 
\end{defi}
We will see in Section \ref{Sec:Pluecker} how these friezes can be interpreted as evaluations of so-called Pl\"ucker friezes, whose entries are Pl\"ucker coordinates in the Grassmannian $Gr(k, n)$, where $n=w+k+1$.

\subsubsection*{Other types of friezes} \label{Subsub:other}

There are also generalizations in a different direction, where the determinant condition is changed. For example, additive friezes have been considered by Coxeter and Rigby, also by Shephard \cite{coxeterrigby, Shephard} in the $1970$s - there the diamond rule \eqref{Eq:diamond} is altered to 
$$a+d=c+d+1 \ . $$
For these friezes a representation theoretic  description would be  interesting. There is also the variant of tropical friezes, which have been studied from a representation theoretic point of view see \cite{RingelTropical, GuoIMRN}. For even more elaborate rules, let us mention symplectic friezes \cite{MG19}, Heronian friezes \cite{FominSetiabrata}, and most recently noncommutative friezes \cite{CHJ25}.

\section{Cluster algebras and cluster categories} \label{Sec:cluster-algs-cats}

In this section we will explain how to get from cluster algebras and categories to friezes. The basic observation for this is again due to Coxeter: every entry in a closed frieze is a continuant polynomial in elements of the quiddity sequence, see Prop.~\ref{Prop:quiddityFrieze}. Continuants appear as exchange polynomials in cluster algebras of type $A$, which will also be defined below. Further, we will look at one prominent example of algebras with a cluster structure: the homogeneous coordinate ring of the Grassmannian $Gr(k,n)$. Then we explain the relation to cluster categories, in particular to Grassmannian cluster categories, which were introduced by Jensen, King, and Su \cite{JKS16}. 

\begin{defi} A \emph{continuant} of order $n$ is the determinant of a tri-diagonal matrix of the form 
$$\begin{pmatrix} y_1 & b_1 & 0 & \cdots & \cdots & \cdots \\
c_1 & y_2 & b_2 & 0 & \cdots & \cdots  \\
0 & c_2 & y_3 & b_3 & 0 & \cdots \\
\vdots & \ddots &  \ddots & \ddots &  \ddots & \ddots \\
0 & \cdots & 0  &  c_{n-2} & y_{n-1} & b_{n-1} \\
0 & \cdots & \cdots  & 0 & c_{n-1} & y_n \end{pmatrix} \ . $$
If $b_i=c_i=1$ for all 
$ i \in \{ 1, \ldots , n-1 \} $, then denote this continuant by $P_n(y_1, \ldots, y_n)$. We set $P_0:=1$. 
\end{defi}

\begin{Prop}[Section (6.6) of \cite{Coxeter}] \label{Prop:quiddityFrieze}
Let $\mc{F}$ be a closed frieze of the form \eqref{Eq:CCfrieze} and denote the quiddity sequence by $a_i:=m_{i-1,i+1}$ for $i \in \ZZ$. Then 
$$m_{i,j}=P_{j-i-1}(a_{i+1}, \ldots, a_{j-1}) \ .$$
This means that any such $\mc{F}$ starts as follows
{\small
\[
 \xymatrix@=0.3em{ 
 &\ldots && 0 && 0 && 0 && 0 &&   \ldots &\\
   &\ldots & 1 && 1 && 1 && 1 && 1    &  \ldots &\\
  &\ldots && a_1 && a_2 && a_3 && a_4 &&      \ldots& \\
  &\ldots & \ldots  && P_2(a_1,a_2) && P_2(a_2,a_3) && P_2(a_3,a_4) &&        & \ldots& \\
 &\ldots && \ldots && P_3(a_1,a_2,a_3) && P_3(a_2,a_3,a_4) && \ldots  &&     \ldots& \\
   &\ldots & \ddots && \ddots && \ddots  &&  \ddots &&  \ddots&   \ldots & 
}
  \]
  }
\end{Prop}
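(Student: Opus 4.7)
The plan is to reduce the claim to a three-term recurrence for entries along a fixed southeast diagonal of the frieze, and then match it against the cofactor expansion of the tridiagonal determinant. Concretely, the goal is to show
$$m_{i,j} = a_{j-1}\, m_{i,j-1} - m_{i,j-2}$$
for every valid pair $(i,j)$. Continuants satisfy the analogous recurrence $P_n(y_1,\ldots,y_n) = y_n P_{n-1}(y_1,\ldots,y_{n-1}) - P_{n-2}(y_1,\ldots,y_{n-2})$ (expansion of the tridiagonal determinant along its last row). Since $m_{i,i+1}=1=P_0$ and $m_{i,i+2}=a_{i+1}=P_1(a_{i+1})$ by definition of the quiddity sequence, a straightforward induction on $j-i$ then yields $m_{i,j} = P_{j-i-1}(a_{i+1},\ldots,a_{j-1})$.

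Establishing the three-term recurrence is the heart of the argument, and I carry it out by descending induction on $i$ with $j$ fixed. The base case $i=j-2$ is immediate: using $m_{j-2,j-2}=0$ (top row of zeroes), $m_{j-2,j-1}=1$ (row of ones), and $m_{j-2,j}=a_{j-1}$ (definition of quiddity), the desired identity reduces to the tautology $a_{j-1}=a_{j-1}\cdot 1 - 0$.

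For the inductive step, suppose the recurrence holds at level $i_0+1$. Applying the diamond rule \eqref{Eq:diamond} to the two adjacent diamonds whose tops are $m_{i_0+1,j-2}$ and $m_{i_0+1,j-1}$ yields
\begin{align*}
m_{i_0,j-2}\, m_{i_0+1,j-1} &= m_{i_0+1,j-2}\, m_{i_0,j-1} + 1,\\
m_{i_0,j}\, m_{i_0+1,j-1} &= m_{i_0,j-1}\, m_{i_0+1,j} - 1.
\end{align*}
Adding these and invoking the inductive hypothesis in the form $m_{i_0+1,j-2}+m_{i_0+1,j}=a_{j-1}\,m_{i_0+1,j-1}$ produces
$$(m_{i_0,j-2}+m_{i_0,j})\, m_{i_0+1,j-1} = a_{j-1}\, m_{i_0,j-1}\, m_{i_0+1,j-1}.$$
All nontrivial frieze entries are nonzero by Definition~\ref{Def:frieze}, so I may cancel $m_{i_0+1,j-1}$ and obtain $m_{i_0,j} = a_{j-1}\, m_{i_0,j-1} - m_{i_0,j-2}$, completing the induction on $i$.

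The main obstacle is really the careful double-indexed bookkeeping, together with making sure that the intermediate entries needed for cancellation are nonzero. Once the indexing of the two adjacent diamonds is set up correctly, the algebra is routine and the full identification $m_{i,j}=P_{j-i-1}(a_{i+1},\ldots,a_{j-1})$ falls out. One could alternatively derive the three-term recurrence from the tameness of closed friezes (vanishing of $3\times 3$ diamond minors), but the route above has the virtue of invoking only the defining diamond rule \eqref{Eq:diamond} together with non-vanishing of entries.
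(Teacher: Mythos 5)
Your argument is correct, and in fact the paper offers no proof of this proposition at all -- it is stated with a citation to Section (6.6) of Coxeter's paper -- so there is nothing to clash with; your route (derive the diagonal three-term recurrence $m_{i,j}=a_{j-1}m_{i,j-1}-m_{i,j-2}$ from two adjacent diamonds, then match it against the last-row expansion of the tridiagonal determinant with the initial values $m_{i,i+1}=1=P_0$, $m_{i,i+2}=a_{i+1}=P_1$) is exactly the classical Coxeter argument. Your index bookkeeping checks out against the convention of \eqref{Eq:CCfrieze} (a diamond with top $m_{i+1,j}$ has left $m_{i,j}$, right $m_{i+1,j+1}$, bottom $m_{i,j+1}$), the base case correctly uses the conventions $m_{ii}=0$, $m_{i,i+1}=1$ recorded in the paper's Facts, and in the extreme step $j-i_0=3$ the first ``diamond'' has its top in the row of zeroes, where the asserted relation is just $1\cdot 1-0\cdot a_{i_0+1}=1$ and so holds by those same conventions. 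The only caveat is the cancellation of $m_{i_0+1,j-1}$: Definition~\ref{Def:frieze} allows entries in an arbitrary commutative ring, where ``nonzero'' does not imply ``cancellable'', so strictly speaking your step needs $m_{i_0+1,j-1}$ to be a non-zero-divisor (automatic over an integral domain, and in particular in the Conway--Coxeter setting of positive integers, which is the context intended here and in Coxeter's original statement); if you want the statement over a general ring you should either add that hypothesis or note it explicitly.
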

Note here that because of the horizontal periodicity of the frieze, see Fact \ref{Fact:CC-frieze} (1), we have that $a_i=a_{i+w+3}$ and thus one only has to compute the continuants for $a_i$, $1 \leq i \leq w+3$ in order to obtain all the entries $m_{ij}$ in the frieze.
Continuant polynomials are a classical topic in the study of determinants and have also appeared in connection with continued fractions \cite{Muir,FareyBoat}.

\subsection{Cluster algebras}  \label{Sec:ClusterAlgs}

Cluster algebras were introduced by Fomin and Zelevinsky in the early 2000s, see \cite{FZ2002}, for more details and the general theory see e.g~\cite{FWZ2016,FWZ2017,FWZ2020,NakanishiBook}. 

In this paper we will mostly be interested in Grassmannian cluster algebras and categories, that means in particular that we will define cluster algebras of geometric type with coefficients, mostly following \cite{FZ2007}.

Let $ (\PP, \oplus, \cdot ) $ be a semifield, i.e., it fulfills the same axioms as a field with the possible exception for the existence of neutral element with respect to $ \oplus $ and for the existence of an inverse element with respect to $ \oplus $. 
 Fix a positive integer $N \in \ZZ_+ $ and a field $K$\footnote{In order to avoid possible pathologies, we will assume that $K$ is algebraically closed throughout. Mostly we will take $K=\CC$.}. 
The {\em ambient field} for a cluster algebra $ \cA $ of rank $ N $ is a field $ \cF$ isomorphic to the field of rational functions in $ N $ independent variables with coefficients in $ \KK \PP $. 
\\

Further recall the notion of a tropical semifield \cite[Definition~2.2]{FZ2007}: Let $ ( \PP, \cdot ) $ be a multiplicative free abelian group with a finite free set of generators 
	$ (p_1, \ldots, p_\ell) $.
	We endow $ \PP $ additionally with the auxiliary addition $ \oplus $ given by
	\[
	\prod_{i=1}^\ell p_i^{a_i} \oplus \prod_{i=1}^\ell p_i^{b_i} := \prod_{i=1}^\ell p_i^{\min\{ a_i, b_i\}}.
	\]
	Using this addition, $ (\PP, \oplus, \cdot ) $ is a semifield,
	which is usually denoted by $ \operatorname{Trop}(p_1, \ldots , p_\ell ) $, the {\em tropical semifield}. 

An $ N $-tuple $ \x = (x_1, \ldots, x_N) $ of elements in $ \cF $ is called a {\em free generating set} if 
$ x_1, \ldots, x_N $ are algebraically independent over $ \KK\PP $ and $ \cF= \KK\PP(x_1, \ldots, x_N) $. 
Moreover, recall that an $ N \times N$ integer matrix $ B = ( b_{ij} ) $ is called  {\em skew-symmetrizable}
if there exist positive integers $ d_1, \ldots, d_N $ such that, for all $ i, j \in \{ 1, \ldots, N \} $, we have
\[
	d_i b_{ij} = - d_j b_{ji} \ . 
\] 
In particular, a skew-symmetrizable matrix is sign-skew-symmetric. 

We will define cluster algebras via extended exchange matrices. Therefore let
	\begin{itemize}
		\item 
		$ \y = (y_1, \ldots, y_N) $ be an $N $-tuple of elements in $ \PP $, called the {\em coefficient tuple},
		
		\item 
		$ B = (b_{ij}) $ be a skew-symmetrizable $ N \times N $ integer matrix,
		called the {\em exchange matrix},
		
		\item 
		$ \x = (x_1, \ldots, x_N) $ be an $ N $-tuple of elements in $ \cF $ forming a free generating set, called the {\em cluster}. 
	\end{itemize}
	
	We are renaming the generators $ \p = (p_1, \ldots, p_\ell) $ of the tropical semifield of $ \PP $ by calling them $ x_{N+1}, \ldots, x_{M} $, where $ M := N + \ell $,
	and set 
	\[
	\widetilde{\x} := (x_1, \ldots, x_N, \ldots, x_M). 
	\]
	Since the coefficients $ y_1, \ldots, y_N $ are Laurent monomials in $ x_{N+1}, \ldots, x_M $, we may define the integers 
	\[
	b_{ij} \in \ZZ,
	\ \ \
	\mbox{for } j \in \{ 1, \ldots, N \} 
	\mbox{ and } i \in \{ N+1, \ldots, M \}
	\]
	via 
	\begin{equation}
		\label{eq:y_j}
		y_j = \prod_{i=N+1}^M  x_i^{b_{ij}} \ . 
	\end{equation}
	
	In other words,
	we are extending the $ N \times N $ exchange matrix $ B $ to an {\em extended exchange matrix $ \widetilde B $} of size $ M \times N $,
	where 
	\[
	\widetilde B := ( b_{ij})_{ i \in \{ 1, \ldots, M \}, j \in \{1, \ldots, N\}} \ .
	\]
	Thus, the new entries below $ B $ (i.e., those $ b_{ij} $ with $ i \geq N + 1 $) are determined by the exponents of $ x_i $  in \eqref{eq:y_j}. 
	The pair $ \Sigma = (\widetilde{\x},\widetilde{B}) $ is called a {\em labeled seed}.

	\begin{defi} \label{Def:seed_mutation}
		Let $ \Sigma = (\widetilde \x, \widetilde B) $ be a labeled seed and $ k \in \{ 1, \ldots, N \} $.  
		The {\em seed mutation} $ \mu_k $ in direction $ k $ transforms $ (\widetilde \x, \widetilde B) $ into the labeled seed $ \mu_k (\widetilde \x, \widetilde B)  := (\widetilde \x', \widetilde B') $,
		where:
		\begin{enumerate}
			\item 
			a new cluster $ \x' = (x_1', \ldots, x_N') $ is introduced via $ x_j' := x_j $ if $ j \neq k $ 
			and $ x_k' \in \cF $ is defined by the {\em exchange relation}:
			\begin{equation}
				\label{eq:exchange_TildeB_1}
				x_k x_k'  
				= 
				\prod\limits_{\substack{i=1 \\ b_{ik} > 0 }}^M x_i^{b_{ik}} + \prod\limits_{\substack{i=1 \\ b_{ik} < 0 }}^M x_i^{-b_{ik}} 				\ .
			\end{equation}
			Notice that the appearing exponents are the entries of the $ k $-th column of $ \widetilde{B} $. The polynomial $x_k x_k'  
				- 
				\prod\limits_{\substack{i=1 \\ b_{ik} > 0 }}^M x_i^{b_{ik}} - \prod\limits_{\substack{i=1 \\ b_{ik} < 0 }}^M x_i^{-b_{ik}}$ is sometimes called \emph{exchange polynomial}.
			
			\item 
			the new exchange matrix 
			$ \widetilde B' = (b_{ij}') $ is of the same size as $ \widetilde B $ and its entries are determined by
			\[
			b_{ij}' := 
			\begin{cases}
				- b_{ij} & \mbox{if } i = k \mbox{ or } j = k \ ,
				\\
				b_{ij} + \operatorname{sgn}(b_{ik}) [ b_{ik} b_{kj} ]_+
				& \mbox{otherwise} \ .
			\end{cases}
			\]	
			Here $[a]_+:=\max(a,0)$ for any real integer $a$.
		\end{enumerate}
		Since we are only allowed to mutate in directions $ \{ 1, \ldots, N \} $,
		the variables $ (x_1, \ldots, x_N) $ are sometimes called the {\em mutable variables} of $ (\widetilde{\x}, \widetilde{B}) $,
		while $ (x_{N+1}, \ldots, x_M) $ are called the {\em frozen variables} or {\em coefficient variables} of $ (\widetilde{\x}, \widetilde{B}) $, cf.~e.g.~\cite[Definition~3.1.1]{FWZ2016}.
	\end{defi}

Note that seed mutation is well-defined, i.e., $ \mu_k (\Sigma) $ is again a labeled seed in $ \cF $.
Furthermore, $ \mu_k $ is an involution, that is, we have $ \mu_k ( \mu_k (\Sigma) ) =  \Sigma $.

In order to finally define cluster algebras, we further need \emph{cluster patterns}: Therefore, recall that the {\em $N$-regular tree} $ \TT_N $ is the infinite simply laced tree such that every vertex $ v $ has $ N $ edges and the latter are labeled with the numbers $ 1, \ldots, N $. 
Then a {\em cluster pattern} is an assignment of a labeled seed $ \Sigma_t = ( \widetilde \x_t, \widetilde B_t) $ to every vertex $ t \in \TT_N $ such that the following property holds:
	\\
	For every pair of vertices $ t, t' \in \TT_N$ which are joined by an edge, say with label $ k $, we have 
	$ \mu_k (\Sigma_t) = \Sigma_{t'} $. 
Note that we also have $ \mu_k (\Sigma_{t'}) = \Sigma_{t} $ in the above situation since $ \mu_k $ is an involution.

\begin{defi}[{\cite[Definition~2.11]{FZ2007}}] 
	\label{Def:ClusterAlgebra}
	For a cluster pattern determined by the seeds $ \Sigma_t = ( \widetilde \x_t, \widetilde B_t)$, $ t \in \TT_N $, 
	we set
	\[
		\cX := \bigcup_{t \in \TT_N } \x_t = \{ x_{i,t} \mid t \in \TT_N, i \in \{ 1 , \ldots, N \} \},
	\] 
	where $ \x_t = (x_{1,t}, \ldots, x_{N,t} ) $.
	We call $ x_{i,t} \in \cX $ the {\em cluster variables}.
	\\
	The {\em cluster algebra} $ \cA $ associated to the given cluster pattern is the $ \KK\PP $-subalgebra of the ambient field $ \cF $ generated by all cluster variables,
	\[
		\cA = \KK\PP [\cX] \ . 	
	\]
	For a labeled seed $ \Sigma =  (\widetilde \x , \widetilde B) = ( \widetilde \x_t, \widetilde B_t) $ with $ t \in \TT_n $ a fixed value,
	we also use the notation
	\[
		\cA = \cA (\Sigma) = \cA(\widetilde \x, \widetilde B) \ . 
	\]
\end{defi}

\medskip 

By the last part, we may define a cluster algebra also by giving a labeled seed $ \Sigma $ in $ \cF $ and to create then a cluster pattern by repeatedly mutating in all possible directions.

\begin{Rmk}[Quiver perspective in the case of skew-symmetric exchange matrices]
	\label{Rk:Quiver_view}
	Let $ \Sigma $ be a labeled seed. 
	If the exchange matrix $ B $ is skew-symmetric, 
	there is an interpretation of $ \Sigma $ in terms of quivers\footnote{This will always be the case for the cluster algebras occurring in the present article.}. 
	\\
	The quiver $ \cQ(\widetilde{B}) $ associated to $ \widetilde{B} = (b_{ij} ) $ is the finite directed graph 
	with $M$ vertices, labeled by $ 1, \ldots, M $
	and such that,
	for $ i \geq j $ there are $ |b_{ij}| \in \ZZ_{\geq 0} $ many pairwise different arrows going from the vertex $ i $ to the vertex $ j $, if $ b_{ij} \geq 0 $, 
	respectively, going from $ j $ to $ i $, if $ b_{ij} < 0 $.
	In order to distinguish mutable and frozen variables in $\cQ(\widetilde{B})$, the first are marked with circles, while the latter are marked with squares.
	\\
For example, if the extended exchange matrix is
	\[
	\widetilde{B} = 
	\begin{pmatrix}
	0 & 1  \\
	-1 & 0 \\
	-1 & 1 \\
	\end{pmatrix}
	\] 
	the following quiver $ \cQ(\widetilde{B}) $: 
	\begin{center} 
		\begin{tikzpicture}[->,>=stealth',shorten >=1pt,auto,node distance=1.75cm, thick,main node/.style={circle,draw, minimum size=8mm},frozen node/.style={rectangle,draw=black, minimum size=8mm}
		]

		\node[main node] (1) at (0,0) {$ 1 $};
		\node[main node] (2) at (3,0) {$ 2 $};
		\node[frozen node] (3) at (1.5,2) {$ 3 $};
		
		\draw[->] (0.4,0.1) -- (2.62,0.1);
		
		\path 
		(1) edge (3);

		\path 
		(3) edge (2);

		\end{tikzpicture}
		\ .
	\end{center}
	
	The exchange relation \eqref{eq:exchange_TildeB_1} then translates to	
	\[ 
	x_k x_k' = 	
	\prod\limits_{i \to k } x_i^{b_{ik}} + \prod\limits_{k \to i} x_i^{-b_{ik}} 
	\ .
	\]
	Furthermore, there is also a mutation rule for the quiver $ \cQ(\widetilde{B}) $, which is compatible with the mutation of the matrix described in Definition~\ref{Def:seed_mutation}(2). We will not further discuss the notion of quiver mutation in this survey and refer to \cite[Chapter 2]{FWZ2016} for more details.
\end{Rmk}


\begin{Ex} 
Consider the simplest non-trivial case without  coefficients, that is the seed $ (\x, B) $,
where $ \x = (x_1, x_2 ) $, $B=\begin{pmatrix} 0 & 1 \\ -1 & 0 \end{pmatrix}$, and the corresponding $ Q = Q(B) $ is an $A_2$-quiver,  
\begin{center} 
	\begin{tikzpicture}[->,>=stealth',shorten >=1pt,auto,node distance=2cm, thick,main node/.style={circle,draw}]
	
	\node[main node] (1) {$1$};
	\node[main node] (2) [right of=1] {$2$};
	
	\path
	(1) edge (2);
	\end{tikzpicture}.
\end{center}
Note here that $N=2$ and $M=0$.
The following table (cf.~\cite[Example 2.7]{BFMS}) describes the behaviour of $ (\x, Q ) $ along repeated mutation:
\[
\begin{array}{|r|c|c|}
\hline 
\mbox{quiver} \hoehe \hspace{1.5cm} & \mbox{cluster} & \mbox{expression in terms of initial cluster $ \x = (x_1, x_2) $}
\\
\hhline {|=|=|=|}
Q = \atwo 
\hoehe 
& 
(x_1, x_2) 
& -
\\
\hline 
\mu_1 (Q) = \aTwo 
\hoehe 
& 
(x_1^{(1)}, x_2^{(1)})
&
x_1^{(1)} = \dfrac{1+ x_2}{x_1},
\hspace{15pt}
x_2^{(1)} = x_2
\\
\hline 
\mu_2 (Q) = \aTwo 
\hoehe 
& 
(x_1^{(2)}, x_2^{(2)})
&
x_1^{(2)} = x_1, 
\hspace{15pt} 
x_2^{(2)} = \dfrac{x_1 + 1}{x_2}
\\
\hline 
\mu_1(\mu_1 (Q)) = \atwo 
\hoehe 
& 
(x_1^{(11)}, x_2^{(11)})
&
x_1^{(11)} = x_1, 
\hspace{15pt} 
x_2^{(11)} = x_2
\\
\hline 
\mu_2(\mu_2 (Q)) = \atwo 
\hoehe 
& 
(x_1^{(22)}, x_2^{(22)})
&
x_1^{(22)} = x_1, 
\hspace{15pt} 
x_2^{(22)} = x_2
\\
\hline 
\mu_1(\mu_2 (Q)) = \atwo 
\hoehe 
& 
(x_1^{(12)}, x_2^{(12)})
&
x_1^{(12)} = \dfrac{1+x_1+x_2}{x_1 x_2}, 
\hspace{15pt} 
x_2^{(12)} = \dfrac{x_1 + 1}{x_2}
\\
\hline 
\mu_2(\mu_1 (Q)) = \atwo 
\hoehe 
& 
(x_1^{(21)}, x_2^{(21)})
&
x_1^{(21)} = \dfrac{1 + x_2}{x_1}, 
\hspace{15pt} 
x_2^{(21)} = \dfrac{1+x_1+x_2}{x_1 x_2}
\\ 
\hline 
\end{array} 
\] 
Here we have
\[
\cA:= 
\cA (\x, Q) 
= K \left[x_1, x_2, \frac{1+x_1}{x_2}, \frac{1+x_2}{x_1}, \frac{1+x_1+ x_2}{x_1 x_2} \right]
=
K \left[x_1, \frac{1+x_1}{x_2}, \frac{1+x_2}{x_1} \right]
\cong 
\]\[	\cong
K[u,v,w]/ \langle uvw - u - v - 1 \rangle
 \ .  
\] 
This is a hypersurface singularity, whose defining equation is the continuant $P_{3}(u,v,w)-1$.

\end{Ex}

\begin{Ex} Let more generally $Q$ be the quiver of type $A_{n-3}$ with linear orientation:
	\begin{center} 
	\begin{tikzpicture}[->,>=stealth',shorten >=1pt,auto,node distance=2.7cm, thick,
  main node/.style={circle,draw,minimum size=10mm,inner sep=0pt}]
		
  \node[main node] (1) {$1$};
  \node[main node] (2) [right of=1] {$2$};
  \node[main node] (n-4) [right of=2] {$n-4$};
  \node[main node] (n-3) [right of=n-4] {$n-3$};
		
  \path
    (1) edge (2);
  \path[dashed]
    (2) edge (n-4);
  \path 
    (n-4) edge (n-3);
\end{tikzpicture}
	\end{center}
 Since any quiver of type $A_{n-3}$ is mutation equivalent to $Q$, any cluster algebra of type $A_{n-3}$ is isomorphic to $\mc{A}(\x, Q)$ where $\x=(x_1, \ldots, x_{n-3})$ and we do not have frozen variables. Since $Q$ is acyclic, the cluster algebra $\mc{A}(\x,Q)$ is generated by the initial seed $\x$ together with $n-3$ new cluster variables obtained by mutating $Q$ in each direction once:
 \begin{align*}
 \mc{A}(\x,Q) & =K[x_1, \ldots, x_{n-3},z_1, \ldots, z_{n-3}]/\langle x_1z_1-x_2-1, x_{n-3}z_{n-3}-x_{n-4}-1, \\
  & \phantom{=K[x_1, \ldots, x_{n-3},z_1, \ldots, z_{n-3}]/\langle l}x_iz_i-x_{i-1} - x_{i+1}, 2 \leq i \leq n-4\rangle \\
 & \cong K[u_1,\ldots, u_{n-2}]/(P_{n-2}(u_1, \ldots, u_{n-2})-1) \ .
 \end{align*}
Note that this is again a hypersurface ring\footnote{In \cite{BFMS} the singularities of all cluster algebras of finite cluster type and with trivial coefficients were classified.}.
\end{Ex}

\begin{Ex} \label{Ex:Gr(2,n)}
The  homogeneous coordinate ring of the Grassmannian $Gr(2,n)$ of $2$-subspaces in $\CC^n$ can be described with Pl\"ucker coordinates, as
$$\mc{A}(2,n):=\CC[Gr(2,n)]=\CC[p_{ij}: 1 \leq i < j \leq n]/\langle \text{Pl\"ucker relations}\rangle \ ,$$
where the Pl\"ucker relations are given as
\begin{equation} \label{Eq:pluecker-rel}
p_{ab}p_{cd} -p_{ac}p_{bd} - p_{ad} p_{bc} 
\end{equation}
for any $a < c < b <d $ in the cyclic ordering (that is, consider the vertices cyclically ordered, say clockwise, in an $n$-gon $P_n$ as in the graphics below): 
\begin{center}
\begin{tikzpicture}[scale=2]

\foreach \i in {1,...,6}{
  \coordinate (A\i) at (60*\i:1);
}

\fill[lightgray] (A1)--(A3)--(A4)--(A5)--cycle;
\draw[thick] (A1)--(A2)--(A3)--(A4)--(A5)--(A6)--cycle;

\draw[thick] (A1)--(A3);
\draw[thick] (A1)--(A4);
\draw[thick] (A1)--(A5);
\draw[thick] (A3)--(A5);

    \fill (A1) circle (1pt) node[shift={(60:0.25)}] {$a$};
     \fill (A5) circle (1pt) node[shift={(120:-0.25)}] {$c$};
   \fill (A4) circle (1pt) node[shift={(180:0.3)}] {$b$};
   \fill (A3) circle (1pt) node[shift={(240:0.3)}] {$d$};

\end{tikzpicture}
\end{center}
In this example the Pl\"ucker coordinates can be seen as arcs in an $n$-gon, where the $p_{i,i+1}$ are the boundary arcs. The ring $\cA(2,n)$ has the structure of a cluster algebra with the clusters corresponding to triangulations (=maximal sets of non-crossing diagonals) of $P_n$.  The boundary edges are the frozen variables and the diagonals are the mutable variables. Mutation in the polygon is given by flipping an arc $(a,b)$ to an arc $(c,d)$ as in Fig.~\ref{Fig:flip-diag}, which corresponds to the Pl\"ucker relation \eqref{Eq:pluecker-rel}. More precisely: if $(a,b)$ is an arc in a fixed triangulation $T$ of $P_n$, then it is a diagonal in a unique quadrilateral (gray in the picture) with vertices $a,c,b,d$ in clockwise order. Mutation means to switch the diagonal $(a,b)$ with $(c,d)$ and to obtain the new triangulation $\mu_{(a,b)}(T)=(T \backslash \{(a,b)\} ) \cup \{ (c,d) \}$.
\begin{figure}[h!] 
\begin{minipage}{0.4 \textwidth}\centering
\begin{tikzpicture}[scale=2]

\foreach \i in {1,...,6}{
  \coordinate (A\i) at (60*\i:1);
}

\fill[lightgray] (A1)--(A3)--(A4)--(A5)--cycle;
\draw[thick] (A1)--(A2)--(A3)--(A4)--(A5)--(A6)--cycle;

\draw[thick] (A1)--(A3);
\draw[thick,color=red] (A1)--(A4);
\draw[thick] (A1)--(A5);

    \fill (A1) circle (1pt) node[shift={(60:0.25)}] {$a$};
     \fill (A5) circle (1pt) node[shift={(120:-0.25)}] {$c$};
   \fill (A4) circle (1pt) node[shift={(180:0.3)}] {$b$};
   \fill (A3) circle (1pt) node[shift={(240:0.3)}] {$d$};

\end{tikzpicture}
\end{minipage}
\begin{minipage}{0.4 \textwidth} \centering
\begin{tikzpicture}[scale=2]

\foreach \i in {1,...,6}{
  \coordinate (A\i) at (60*\i:1);
}

\fill[lightgray] (A1)--(A3)--(A4)--(A5)--cycle;
\draw[thick] (A1)--(A2)--(A3)--(A4)--(A5)--(A6)--cycle;

\draw[thick] (A1)--(A3);
\draw[thick] (A1)--(A5);
\draw[thick,color=red] (A3)--(A5);

    \fill (A1) circle (1pt) node[shift={(60:0.25)}] {$a$};
     \fill (A5) circle (1pt) node[shift={(120:-0.25)}] {$c$};
   \fill (A4) circle (1pt) node[shift={(180:0.3)}] {$b$};
   \fill (A3) circle (1pt) node[shift={(240:0.3)}] {$d$};

\end{tikzpicture}
\end{minipage}
\caption{Mutation in polygon: flip of diagonal.} \label{Fig:flip-diag}
\end{figure}
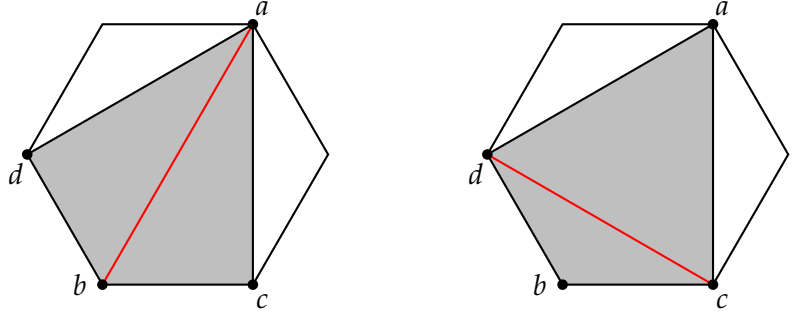 
Note here that we will often speak of Pl\"ucker coordinates geometrically: $p_{ab}$ and $p_{cd}$ are \emph{crossing} if either 
\[a < c < b < d \text{ or } c < a < d < b \]
in the cyclic ordering. If  $p_{ab}$ and $p_{cd}$ are not crossing, then we call them \emph{non-crossing} (or \emph{compatible}).

This example is described in detail in \cite[Section 12.2]{FZ2002}, as well as in \cite[Section 1.2]{BaurIsfahan}. In \cite{Scott06} it is generalized to $\CC[Gr(k,n)]$, also see Example \ref{Ex:Grassmannian-kn} below. 
\end{Ex}

Let  us note here that any cluster in $\cA (2,n)$ corresponds to a triangulation of $P_n$ and conversely, any triangulation of $P_n$ yields a cluster in $\CC[Gr(2,n)]$. One can also associate a quiver to a triangulation, this is explained (without coefficients) in Section \ref{Sub:triangulation-quiver}. For the quivers including frozen variables, see \cite{Scott06}.
Setting $p_{ii}=0$ and mapping $p_{ij}$ to $m_{ij}$ we obtain a frieze with coefficients with entries Pl\"ucker variables, see Def.~\ref{Def:frieze-coeffs}, this way: the frieze relations are satisfied, as they are just the Pl\"ucker relations for quadrilaterals with vertices $i, i+1, j, j+1$. The obtained frieze is also the specialized Pl\"ucker frieze $s\mc{P}_{2,n}$, cf.~Def.~\ref{Def:Pluecker-frieze}. \\
Thus any positive integer point on the Grassmannian will yield a closed integral frieze with coefficients by plugging in the values of the $p_{ij}$.

Conversely, from any closed integral frieze with coefficients we obtain an integer point in the Grassmannian: the frieze relations are satisfied, and indeed, one can show that this already implies that all the Pl\"ucker relations are satisfied, see e.g. \cite[Theorem 3.3]{CuntzHolmJorgensen}.

\begin{Ex}[Grassmannian cluster algebras $Gr(k,n)$] \label{Ex:Grassmannian-kn}
The previous example can be generalized to $\cA(k,n):=\CC[Gr(k,n)]$ the homogeneous coordinate ring of the Grassmannian of $k$-subspaces of $\CC^n$, where we always assume that $k \leq \frac{n}{2}$. This is due to Scott \cite{Scott06}. We will only define the notions used later, for definitions and details about the non-explained terminology here, see \cite{Scott06,BaurIsfahan,Marsh13}. \\
In $\cA(k,n)$ one has Pl\"ucker coordinates $p_I$ indexed by $k$-subsets $I=\{i_1, \ldots, i_k\}$ of $[n]=\{1, \ldots, n\}$, where we assume $i_1 < \ldots <i_k$. The \emph{Pl\"ucker relations} for $Gr(k,n)$ are given by
\[ \sum_{r=0}^k (-1)^r p_{i_1, \ldots, i_{k-1}, j_r} p_{j_0, \ldots, \hat{j_r}, \ldots, j_k} \ , \]
where the sum is taken over all tuples $(i_1, \ldots, i_{k-1})$, $(j_0, \ldots, j_k)$ satisfying $1 \leq i_1< \cdots < i_{k-1} \leq n$ and $1 \leq j_0 < \cdots < j_k \leq n$.
Clusters consisting of Pl\"ucker coordinates are given by \emph{Postnikov diagrams}\footnote{But for $k \geq  3$ there also exist clusters containing higher degree cluster variables.}. One can also associate a quiver $Q_D$ containing frozen variables  to a Postnikov diagram $D$ and gets an isomorphism $\mc{A}(Q_D)\cong \cA(k,n)$ for each Postnikov diagram. \\
Further, the Pl\"ucker coordinates in a given Postnikov diagram satisfy the analogous properties as for a triangulation in the $Gr(2,n)$-case: they are a maximal set of \emph{non-crossing} Pl\"ucker coordinates. Here two Pl\"ucker coordinates $p_I$ and $p_J$ in $\cA(k,n)$, where $I$ and $J$ are $k$-subsets of $[n]$,  are called \emph{crossing} if there exist $i_1, i_2 \in I \backslash J$ and $j_1, j_2 \in J \backslash I$ with
$$i_1 < j_1 < i_2 < j_2 \text{ or } j_1 < i_1 < j_2 < i_2 \ . $$
Then $p_I$ and $p_J$ are non-crossing if they are not crossing. The size of a cluster is $(k-1)(n-k-1)$.
\end{Ex}

\subsection{Cluster categories - without coefficients} \label{Sub:cluster-cat-triang}

Now assume that our cluster algebra $\cA$ is of the form $\cA(\x , Q)$ (cf.~Remark \ref{Rk:Quiver_view}) and the  quiver $Q$ is associated to a simply laced Dynkin quiver \footnote{That means that the underlying graph of $Q$ is a Dynkin diagram of type $A_n, D_n, E_6, E_7$, or $E_8$. These quivers give in particular rise to representation finite algebras $KQ$, see e.g.~\cite{ARS-Book} for more details.}. This means that $\cA$ is a cluster algebra of finite cluster type, i.e., there are only finitely many cluster variables in $\cA$, see \cite{FZ2003II}.
If there are no frozen variables in $\cA$, that is $N=M$, then there is a categorification due to Buan, Marsh, Reineke, Reiten, and Todorov \cite{BMRRT06} via the path algebra of the quiver $Q$. Let us point out that this construction is surveyed in more detail in Pressland's lecture notes \cite{Pressland-survey}, also with connection to friezes. Moreover, see Keller's survey \cite[Section 6]{Keller-Iran}.

\begin{defi} \label{Def:cluster-cat}
Let $Q$ be a simply laced Dynkin quiver with any orientation. Denote by $KQ$ the path algebra of $Q$ (with coefficients in an algebraically closed field $K$) and let $D^b(Q):=D^b(\mmod{KQ})$ its derived module category. Then the \emph{cluster category} $\mc{C}_Q$ is defined to be the orbit category 
$$\mc{C}_Q=D^b(Q)/\tau^{-1}\! \circ \! [1] \ , $$
where $\tau$ is the Auslander--Reiten translation on $D^b(Q)$ and $[1]$ is the shift of the derived category. \\
An object $T \in \mc{C}_Q$ is \emph{rigid} if $\Hom(T,T[1])=0$ and $T$ is called \emph{maximally rigid} if $T=\sum_{i=1}^nT_i$ is rigid, the $T_i$ are indecomposable and pairwise non-isomorphic, and $n=|Q_0|$. An object $T$ is called \emph{cluster tilting} if 
\begin{align*}\add(T) & =\{ X \in \mc{C}_Q: \Ext^1_{\mc{C}_Q}(X,T')=0 \text{ for all } T' \in \add(T) \} \\
 & =\{ X \in \mc{C}_Q: \Ext^1_{\mc{C}_Q}(T',X)=0 \text{ for all } T' \in \add(T) \} \ .
  \end{align*}
Further, if $T$ is a cluster tilting object in $\mc{C}_Q$ then $B_T:=\End_{\mc{C}}(T)$ is called a \emph{cluster tilted algebra}. 
\end{defi}
Note here that $D^b(Q)$ can be viewed as $\bigcup_{i \in \ZZ}\mmod{KQ}[i]$ with connecting homomorphisms for any acyclic quiver $Q$ (that is, all objects in $D^b(Q)$ are modules or shifts of modules, see \cite[Section 4]{Happel87}). 

\begin{Rmk}
Since we will see in Section \ref{Sub:triangulation-quiver} that the quivers coming from triangulations of polygons may sometimes only be mutation equivalent to quivers of type $A$, one can also associate a cluster category in that case:
If $Q'$ is a quiver that is mutation equivalent to a Dynkin quiver, then one has to consider the generalized cluster category $\mc{C}_{Q',W}$, where $W$ is a potential, introduced by \cite{Amiot}. This category is derived equivalent to $\mc{C}_{Q}$.
\end{Rmk}

Let now $\mc{C}=\mc{C}_Q$ denote a cluster category as above. The cluster category $\mc{C}$ is triangulated and $2$-Calabi--Yau as shown in \cite{BMRRT06}. For more triangulated properties and details we refer to Keller's excellent survey \cite{Keller-quiver-triangulated}. In order to describe such a category most appropriately, one generally computes the \emph{Auslander--Reiten (=AR) quiver}. This is a quiver, whose vertices correspond to indecomposable objects in $\mc{C}$ and arrows correspond to irreducible maps between them. Moreover, the Auslander--Reiten translation is denoted by a dashed arrow. For more on AR-quivers, in particular in this context, see \cite{SchifflerBook}, or more generally in \cite{ASS, ARS-Book, Yoshino}.

In order to describe the AR-quiver, we use a derived equivalence between a quotient of $\mc{C}_Q$ and the module category of a cluster tilted algebra $B_T$, where $T$ is a cluster tilting object in $\mc{C}_Q$ (see Def.~\ref{Def:cluster-cat}). 

One can show that the module category $\mmod{B_T}$ is derived  equivalent to the quotient category $\mc{C} / \langle \add (T) [1]\rangle$, see \cite{BuanMarshReiten-TAMS} and \cite{KoenigZhu} for a more general framework.  This means that any indecomposable in $\mc{C}$ that is not isomorphic to the shift of a direct summand of  $T$ is an indecomposable module over $B_T$. The shifts of the direct summands $T_i$ of $T$ correspond to the  suspension of projective modules in the generalized cluster category of $B_T$.  Moreover, the structure of the Auslander--Reiten quivers is preserved, so that one can write down AR-sequences in $B_T$. One may now explicitly compute the AR-quiver and write down the modules. All of them are string modules and conversely, one can also obtain the modules from the AR-quiver: if $\mc{C}$ is a generalized cluster category of type $A_n$, then this is explained in Construction 2.24 \cite[Section 2]{BFGST18}.

\begin{Ex}
Let $Q$ be the $A_2$ quiver with $Q = \atwo$. Then the AR-quiver of $D^b(Q)$ looks as follows:
\[
\begin{tikzpicture}[scale=0.6, transform shape]

	\node   (Z2) at (2,10) {$\begin{smallmatrix}1\\2\end{smallmatrix}[-2]$};
	\node (Z3) at (4,10) {$2[-1]$};
	\node   (Z4) at (6,10) {$1[-1]$};
	\node (Z5) at (8,10) {$\begin{smallmatrix}1\\2\end{smallmatrix}$};
	\node  (Z6) at (10,10) {$2[1]$};
	\node (Z7) at (12,10) {$1[1]$};
	\node (Z8) at (14,10) {$\begin{smallmatrix}1\\2\end{smallmatrix}[2]$};
	\node   (Z9) at (16,10) {$2[3]$};
	\node (Z10) at (18,10) {$1[3]$};
	\node   (Z11) at (20,10) {$\begin{smallmatrix}1\\2\end{smallmatrix}[4]$};
	\node (Z12) at (22,10) {$2[5]$};
	
	\node (Y1) at (1,9) {$\cdots$};
	\node (Y2) at (3,9) {$1[-2]$};
	\node (Y3) at (5,9) {$\begin{smallmatrix}1\\2\end{smallmatrix}[-1]$};
	\node (Y4) at (7,9) {$2$};
	\node (Y5) at (9,9) {$1$};
	\node  (Y6) at (11,9) {$\begin{smallmatrix}1\\2\end{smallmatrix}[1]$};
	\node (Y7) at (13,9) {$2[2]$};
	\node (Y8) at (15,9) {$1[2]$};
	\node (Y9) at (17,9) {$\begin{smallmatrix}1\\2\end{smallmatrix}[3]$}; 
	\node (Y10) at (19,9) {$2[4]$};
	\node (Y11) at (21,9) {$1[4]$};
	\node (Y12) at (23,9) {$\cdots$};

	\draw[->] (Z2) -- (Y2);
	\draw[->] (Z3) -- (Y3);
	\draw[->] (Z4) -- (Y4);
	\draw[->] (Z5) -- (Y5);
	\draw[->] (Z6) -- (Y6);
	\draw[->] (Z7) -- (Y7);
	\draw[->] (Z8) -- (Y8);
	\draw[->] (Z9) -- (Y9);
	\draw[->] (Z10) -- (Y10);
	\draw[->] (Z11) -- (Y11);
	\draw[->] (Z12) -- (Y12);

	\draw[->] (Y1) -- (Z2);
	\draw[->] (Y2) -- (Z3);
	\draw[->] (Y3) -- (Z4);
	\draw[->] (Y4) -- (Z5);
	\draw[->] (Y5) -- (Z6);
	\draw[->] (Y6) -- (Z7);
	\draw[->] (Y7) -- (Z8);
	\draw[->] (Y8) -- (Z9);
	\draw[->] (Y9) -- (Z10);
	\draw[->] (Y10) -- (Z11);
	\draw[->] (Y11) -- (Z12);

\end{tikzpicture}
\]
The AR-quiver of the quotient $\mc{C}_Q$ becomes periodic
\[
\begin{tikzpicture}[scale=0.6, transform shape]

	\node   (Z2) at (2,10) {$2$};
	\node (Z3) at (4,10) {$1$};
	\node   (Z4) at (6,10) {$\begin{smallmatrix}1\\2\end{smallmatrix}[1]$};
	\node (Z5) at (8,10) {$\begin{smallmatrix}1\\2\end{smallmatrix}$};
	\node  (Z6) at (10,10) {$2[1]$};
	\node (Z7) at (12,10) {$2$};
	\node (Z8) at (14,10) {$1$};
	\node   (Z9) at (16,10) {$\begin{smallmatrix}1\\2\end{smallmatrix}[1]$};
	\node (Z10) at (18,10) {$\begin{smallmatrix}1\\2\end{smallmatrix}$};
	\node   (Z11) at (20,10) {$2[1]$};
	\node (Z12) at (22,10) {$2$};
	
	\node (Y1) at (1,9) {$\cdots$};
	\node (Y2) at (3,9) {$\begin{smallmatrix}1\\2\end{smallmatrix}$};
	\node (Y3) at (5,9) {$2[1]$};
	\node (Y4) at (7,9) {$2$};
	\node (Y5) at (9,9) {$1$};
	\node  (Y6) at (11,9) {$\begin{smallmatrix}1\\2\end{smallmatrix}[1]$};
	\node (Y7) at (13,9) {$\begin{smallmatrix}1\\2\end{smallmatrix}$};
	\node (Y8) at (15,9) {$2[1]$};
	\node (Y9) at (17,9) {$2$}; 
	\node (Y10) at (19,9) {$1$};
	\node (Y11) at (21,9) {$\begin{smallmatrix}1\\2\end{smallmatrix}[1]$};
	\node (Y12) at (23,9) {$\cdots$};
	
	\draw[->] (Z2) -- (Y2);
	\draw[->] (Z3) -- (Y3);
	\draw[->] (Z4) -- (Y4);
	\draw[->] (Z5) -- (Y5);
	\draw[->] (Z6) -- (Y6);
	\draw[->] (Z7) -- (Y7);
	\draw[->] (Z8) -- (Y8);
	\draw[->] (Z9) -- (Y9);
	\draw[->] (Z10) -- (Y10);
	\draw[->] (Z11) -- (Y11);
	\draw[->] (Z12) -- (Y12);

	\draw[->] (Y1) -- (Z2);
	\draw[->] (Y2) -- (Z3);
	\draw[->] (Y3) -- (Z4);
	\draw[->] (Y4) -- (Z5);
	\draw[->] (Y5) -- (Z6);
	\draw[->] (Y6) -- (Z7);
	\draw[->] (Y7) -- (Z8);
	\draw[->] (Y8) -- (Z9);
	\draw[->] (Y9) -- (Z10);
	\draw[->] (Y10) -- (Z11);
	\draw[->] (Y11) -- (Z12);

\end{tikzpicture}
\]
Here a cluster tilting object would be $T=2 \oplus \begin{smallmatrix}1\\2\end{smallmatrix}$. 
\end{Ex}

In a cluster category $\mc{C}_Q$, where $Q$ is a Dynkin quiver, there is also a mutation procedure with respect to a chosen cluster tilting object $T$, which corresponds to mutation in a cluster algebra of the same Dynkin type as $Q$.  More precisely, mutation of a cluster tilting object in the cluster category $\mc{C}_Q$ corresponds to the mutation of the quiver of the endomorphism algebra of the corresponding cluster tilting object. In Section \ref{Sub:cluster-char} we will see that one can recover the cluster variables in the algebra $\mc{A}_Q$ (a cluster algebra of finite cluster type with trivial coefficients) from $\mc{C}_Q$.

\begin{defi}[cf.~\cite{BMRRT06}] Let $T=\bigoplus_{i=1}^nT_i$ be a cluster tilting object in a cluster category $\mc{C}_Q$. A \emph{mutation
of $T$ in direction $i$} is a new cluster tilting object $\mu_i(T)= T/T_i \oplus T_i'$ where $T_i'$ is defined as the pseudokernel of the right $\add(T/T_i)$-approximation of $T_i$ or pseudocokernel of the left $\add(T/T_i)$-approximation of $T_i$:
\[ \xrightarrow{}T_i' \xrightarrow{f_i} B \xrightarrow{} T_i \xrightarrow{} \text{ or } \xrightarrow{} T_i \xrightarrow{} B \xrightarrow{f_i} T_i' \xrightarrow{} \ .  \]
\end{defi}

\subsubsection{Triangulations and cluster categories of type $A$} \label{Sub:triangulation-quiver}
We conclude this section with giving an account how to move from a triangulation of an $n$-gon  $P_n$ to a cluster category $\mc{C}_Q$ of type $A_{n-3}$. Combining this with the results in Section \ref{Sec:ClusterAlgs} we get a correspondence between cluster algebras of type $A$, cluster categories of type $A$, triangulations of polygons, and CC-friezes. 
We follow the approach of \cite{CCS06}, also discussed in \cite[Section 3.1.3 and 3.4]{SchifflerBook}. We will discuss in Section \ref{Sub:cluster-char} how to construct a cluster character map directly from $\mc{C}_Q$ to $\mc{A}_Q$ and via specialization to the corresponding CC-frieze.

First assume that $P_n$ is a triangulated $n$-gon with triangulation $T$. Here we omit the boundary arcs, that is, $T$ consists of $n-3$ non-crossing diagonals in $P_n$. Number the elements of $T$ by $1, \ldots, n-3$ - these will be the vertices of the quiver $Q_T$ associated to $T$. There are two types of triangles in the triangulation: (i) either a triangle consists of two elements $i$, $j$ of $T$ and a boundary edge or (ii) a triangle consists of three elements $i,j,k$ of $T$. In case (i) we draw an arrow from $i$ to $j$ if the orientation is as in Fig.~\ref{Fig:triang-quiver-arrow}. In case (ii) we draw a $3$-cycle of arrows between $i$, $j$, and $k$ as in Fig.~\ref{Fig:triang-quiver-arrow}.  Thus we obtain the quiver $Q_T$.\\
\begin{figure}[h!]
\begin{minipage}{0.45 \textwidth}
\begin{tikzpicture}[scale=2]

\foreach \i in {1,...,6}{
  \coordinate (A\i) at (60*\i:1);
}

\fill[lightgray] (A1)--(A4)--(A5)--cycle;
\draw[thick] (A1)--(A2)--(A3)--(A4)--(A5)--(A6)--cycle;

\draw[thick] (A1)--(A3);
\draw[thick] (A1)--(A4);
\draw[thick] (A1)--(A5);

    \fill (A1) circle (1pt) node[shift={(60:0.25)}] {$a$};
     \fill (A5) circle (1pt) node[shift={(120:-0.25)}] {$c$};
   \fill (A4) circle (1pt) node[shift={(180:0.3)}] {$b$};
   \fill (A3) circle (1pt) node[shift={(240:0.3)}] {};

\node at ($(A1)!0.5!(A5)$) [right=1pt] {$i$};
\node at ($(A1)!0.5!(A4)$) [left=1pt] {$j$};

\coordinate (M_ac) at ($(A1)!0.5!(A5)$);
\coordinate (M_ab) at ($(A1)!0.5!(A4)$);
\draw[->, thick, red] (M_ac) -- (M_ab);

\end{tikzpicture}
\end{minipage}
\begin{minipage}{0.45 \textwidth}
\begin{tikzpicture}[scale=2]

\foreach \i in {1,...,6}{
  \coordinate (A\i) at (60*\i:1);
}

\fill[lightgray] (A1)--(A3)--(A5)--cycle;
\draw[thick] (A1)--(A2)--(A3)--(A4)--(A5)--(A6)--cycle;

\draw[thick] (A1)--(A3);
\draw[thick] (A1)--(A5);
\draw[thick] (A3)--(A5);

    \fill (A1) circle (1pt) node[shift={(60:0.25)}] {$a$};
     \fill (A5) circle (1pt) node[shift={(120:-0.25)}] {$c$};
   \fill (A4) circle (1pt) node[shift={(180:0.3)}] { };
   \fill (A3) circle (1pt) node[shift={(240:0.3)}] {$d$};

\node at ($(A1)!0.5!(A5)$) [right=1pt] {$i$};
\node at ($(A1)!0.5!(A3)$) [above left=-1pt] {$j$};
\node at ($(A3)!0.5!(A5)$) [below left=-1pt] {$k$};

\coordinate (M_ac) at ($(A1)!0.5!(A5)$);
\coordinate (M_ad) at ($(A1)!0.5!(A3)$);
\coordinate (M_cd) at ($(A3)!0.5!(A5)$);

\draw[->, thick, red] (M_ac) -- (M_ad);
\draw[->, thick, red] (M_ad) -- (M_cd);
\draw[->, thick, red] (M_cd) -- (M_ac);

\end{tikzpicture}
\end{minipage}
\caption{Arrows in the \rot{quiver}: case (i) left, case (ii) right.}
\label{Fig:triang-quiver-arrow}
\end{figure}
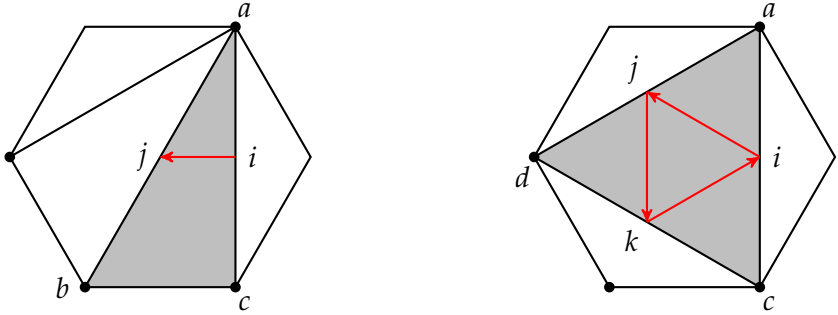 

Now the cluster tilted algebra $B_T=KQ_T/I$ is given as the path algebra of the quiver $Q_T$ modulo the relations coming from $3$-cycles in $Q_T$: whenever $i \xrightarrow{\alpha} j \xrightarrow{\beta} k \xrightarrow{\gamma} i$ is a $3$-cycle, then $\alpha \beta , \beta \gamma $, and $\gamma \alpha $ are in $I$. 
Note that $B_T$ is isomorphic as a ring to $\End_{\mc{C}}(T)$ for $T$ the cluster tilting object in $\mc{C}$ the cluster category corresponding to the triangulation $T$. The AR-quiver for $B_T$ can be constructed by the usual process of knitting, as e.g. explained in \cite[Section 3]{SchifflerBook}. One can also construct the AR-quiver of $B_T$ directly from the triangulation $T$, as explained in detail in \cite[Sections 3.1.3 and 3.4.1]{SchifflerBook}.

The procedure can also be reversed: if $\mc{C}$ is a cluster category of type $A$, then using the AR-quiver of $B_T$ one can reconstruct the quiver $Q$. From $Q$ it is easy to obtain the triangulated polygon.

\subsection{Grassmannian cluster categories}

We have already introduced Grassmannian with its cluster algebra $\cA(k,n)=\CC[Gr(k,n)]$ in Example \ref{Ex:Grassmannian-kn}. There is also a Frobenius categorification of $\cA(k,n)$ due to Jensen, King, and Su \cite{JKS16}, inspired by the categorification of an affine open cell in the Grassmannian by \cite{GLS-flag}. The JKS category is constructed from a plane curve singularity $\{x^k-y^{n-k}=0\} \subseteq \CC^2$, which will be the key idea to generalize to the infinite Grassmannian $Gr(k,\infty)$, see Section \ref{Sub:infinite}. 

Let $C_n=\langle \zeta \in \CC^*: \zeta^n=1\rangle$\footnote{This group is sometimes denoted by $\mu_n$ in the literature, but we use a different notation to avoid confusion with the mutation $\mu_n$.}, where $\zeta$ is a primitive root of unity, act on $\CC^2$ via the usual action
$$(x,y) \mapsto (\zeta x, \zeta^{-1} y) \ . $$
The element $f=x^k - y^{n-k}$ is a semi-invariant under the action of $C_n$, however, the $C_n$-action restricts to the plane curve singularity with coordinate ring $R_{k,n}=\CC\llbracket x,y \rrbracket / \langle x^k-y^{n-k}\rangle$. The invariant subring of the $C_n$-action is $Z=\CC\llbracket t \rrbracket$ with $t=xy$. \\
Then look at the category of $C_n$-equivariant finitely generated modules over $R_{k,n}$, namely $\textrm{mod}_{C_n}(R_{k,n})$, which is tautologically equivalent to the category of finitely generated modules over the skew-group ring $C_n * R_{k,n}$. The \emph{Grassmannian cluster category of type $(k,n)$} is defined to be
$$\mc{C}(k,n)=\CM (C_n * R_{k,n})=\CM_{C_n}(R_{k,n}) \ , $$
that is, the category of maximal Cohen--Macaulay modules over $C_n * R_{k,n}$. In particular, the  objects of $\mc{C}(k,n)$ are modules over the skew-group ring that are free over the invariant subring $Z$. 

\begin{Rmk}
Alternatively, one can describe the category using quivers: let $B=B_{k,n}$ be the completion of the path algebra 
$\CC Q(n)/\langle xy-yx,x^k-y^{n-k}\rangle$, 
where $Q(n)$ is a quiver with $n$ vertices $1, \ldots, n$ ordered in a circle and there are arrows $i \xrightarrow{x_{i+1}} i+1$, $i \xrightarrow{y_i} i-1$ for all $i$\footnote{This is the double quiver $\bar Q$ of $Q$ of extended Dynkin type $A_{n-1}$.}, $xy-yx$ stands for the $n$ relations $y_ix_i - x_{i+1}y_{i+1}$, $i=1,\dots,n$ and $x^k-y^{n-k}$ stands for the $n$ relations $x_{i+1} \cdots x_{i+k} -  y_iy_{i-1} \cdots  y_{i-n+k+1}$ 
(reducing indices modulo $n$). Then $\mc{C}(k,n)=\CM(B)$. We refer to \cite{JKS16, BaurIsfahan} for details on this construction.
\end{Rmk}

The category  $\mc{C}(k,n)$ is a Frobenius exact category, is Krull--Schmidt, and it is stably 2-CY. Note that the stable category $\underline{\mathcal C(k,n)}$ is triangulated, which follows from \cite[Theorem 4.4.1]{BuchweitzMCM}, since $B$ is Iwanaga--Gorenstein, or alternatively from \cite[I, Theorem 2.6]{Happel88}, since $\mc{C}(k,n)$ is a Frobenius category. The category $\mc{C}(k,n)$ is an additive categorification of the cluster algebra structure of $\mathcal A(k,n)$, see \cite{JKS16}. 
Moreover, $\mc{C}(k,n)$ has  AR-sequences \cite[Remark 3.3]{JKS16} and in particular the AR-quivers for the finite type cluster categories, that is $\mc{C}(k,n)$ for $k=2, n \geq 4$ or $k=3, n \in \{6,7,8\}$,  have been described in loc.~cit., also see \cite[Section 7]{FMP} for a detailed description of the maps for $\mc{C}(2,n)$.

In view of the relation with friezes, we also collect the following:

\begin{fact} \label{Fact:JKS-cat}
\begin{enumerate}[(1)]
\item As already mentioned above, $\mc{C}(k,n)$ is of \emph{finite type}, if it has finitely many isomorphism classes of indecomposable modules. This happens for $k \leq \frac{n}{2}$ if and only if 
either $k=2$ and $n$ arbitrary, or $k=3$ and $n\in \{6,7,8\}$. These categories are of Dynkin type $A_{n-3}$, $D_4$, $E_6$ and $E_8$ respectively. This means that one can find a cluster-tilting object in the stable category $\underline{\mc{C}(k,n)}$ whose quiver 
is of the corresponding Dynkin type.
\item The \emph{rank} of an object $X \in\mc{C}(k,n)$ is defined to be the length of $X \otimes_ZK$ for $K$ the field of fractions of the invariant subring $Z$, see \cite[Definition 3.5]{JKS16}. 
There is a bijection between the rank one indecomposable objects of $\mc{C}(k,n)$ and $k$-subsets of $[n]$ (here we denote by $[n]$ the interval of integers $1, 2, \ldots, n$). Thus any rank one module in $\mc{C}(k,n)$ is of the form $M_I$ where $I$ is a $k$-subset of $[n]$. In particular, the indecomposable projective-injective objects are indexed by the $n$ consecutive $k$-subsets of $[n]$ (where we consider cyclic consecutive $k$-subsets modulo $n$).
The rank one indecomposables of $\mc{C}(k,n)$ are thus in bijection with the cluster variables of 
$\mathcal A(k,n)$ which are Pl\"ucker coordinates, cf.~Example \ref{Ex:Grassmannian-kn}.  For $\mc{A}(2,n)$ this means that we have a bijection between Pl\"ucker coordinates and indecomposable objects in $\mc{C}(k,n)$.
\item The number of indecomposable non projective-injective summands in any cluster-tilting object of $\mc{C}(k,n)$ 
is $(k-1)(n-k-1)$. This is called the {\em rank} of the cluster category $\mc{C}(k,n)$. \\
For $k=2$, this means that a cluster tilting object $T$ in $\mc{C}(2,n)$ has in total $(n-3) + n=2n-3$ direct summands, which correspond to the diagonals in the triangulation of $P_n$ corresponding to $T$ plus the boundary edges $(i,i+1)$ of the polygon $P_n$.
\end{enumerate}
\end{fact}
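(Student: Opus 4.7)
The three parts admit rather different proofs, though all ultimately hinge on the explicit description of $\mc{C}(k,n)$ via $\CM(B_{k,n})$ and the cluster-theoretic properties inherited from $\cA(k,n)$.

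For (1), the plan is to combine the categorification theorem of Jensen--King--Su with Fomin--Zelevinsky's and Scott's classification of the cluster type of $\cA(k,n)$. Since $\mc{C}(k,n)$ additively categorifies $\cA(k,n)$, cluster-tilting objects correspond bijectively to clusters, and mutation of cluster-tilting objects matches mutation of seeds; so $\mc{C}(k,n)$ is of finite representation type if and only if $\cA(k,n)$ is of finite cluster type. One then quotes Scott: $\cA(2,n)$ is of type $A_{n-3}$ and $\cA(3,6), \cA(3,7), \cA(3,8)$ are of types $D_4, E_6, E_8$ respectively, with all remaining $\cA(k,n)$ (for $k \leq n/2$) being of infinite cluster type. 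Identifying the Dynkin type of a cluster-tilting object in the stable category then follows by computing its endomorphism algebra and comparing with the Dynkin quiver.

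For (2), I would construct the rank one indecomposables $M_I$ explicitly for each $k$-subset $I = \{i_1 < \cdots < i_k\} \subseteq [n]$, following the ``profile'' (or rim) description. Realising $\mc{C}(k,n) = \CM(B_{k,n})$, one assigns to $I$ a rank one submodule of the function field, prescribed at each vertex of $Q(n)$ by the appropriate monomial in $x,y$ dictated by the relative positions of the elements of $I$. The checks are: (a) $M_I$ is $Z$-free of rank one, hence MCM of rank one in the sense of the definition; (b) $M_I$ is indecomposable by a direct computation of $\End(M_I)$; (c) $M_I \cong M_J$ forces $I = J$ (comparing profiles vertex-by-vertex); and (d) any rank one MCM $\mu_n$-equivariant module is isomorphic to some $M_I$. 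Step (d) is the main obstacle: it requires classifying rank one $\mu_n$-equivariant submodules of the function field by showing that the lattice of such submodules is controlled by a $k$-subset datum, which uses the Iwanaga--Gorenstein structure and the specific shape of the relations $x^k = y^{n-k}$. That projective-injective objects correspond to consecutive $k$-subsets follows by checking which of the $M_I$ are also injective, which amounts to verifying that the associated profile closes up around $Q(n)$ without a jump.

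For (3), I would once again transport a cluster algebra formula through the categorification. The affine cone over $Gr(k,n)$ has dimension $k(n-k)+1$, so any cluster in $\cA(k,n)$ has exactly $k(n-k)+1$ variables; of these, exactly $n$ are frozen, namely the cyclically consecutive Pl\"ucker coordinates $p_{i,i+1,\ldots,i+k-1}$. Hence the number of mutable variables in any cluster is $k(n-k)+1-n = (k-1)(n-k-1)$. Under the JKS bijection, summands of a cluster-tilting object correspond to cluster variables, with projective-injective summands matching the frozen variables; thus the number of non-projective-injective summands is $(k-1)(n-k-1)$. The $k=2$ specialisation then recovers the $n-3$ diagonals of a triangulation of $P_n$, which together with the $n$ boundary edges give the $2n-3$ summands as claimed.
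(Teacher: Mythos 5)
Since this is a survey ``Facts'' environment, the paper does not prove these statements but cites them (mainly \cite{JKS16} and \cite{Scott06}). Your reconstructions of (2) and (3) follow essentially the standard arguments: the profile/rim construction of the rank one modules $M_I$ and their classification is exactly the content of \cite{JKS16} (your step (d) \emph{is} that classification, and it is indeed the hard part), and the count in (3) via $k(n-k)+1$ extended cluster variables minus the $n$ frozen consecutive Pl\"ucker coordinates, transported through the categorification, is the intended justification of $(k-1)(n-k-1)$; the $k=2$ specialisation is then immediate.

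In (1), however, there is a genuine gap. The equivalence ``$\mc{C}(k,n)$ of finite representation type $\Leftrightarrow$ $\cA(k,n)$ of finite cluster type'' does not follow from the categorification alone. The bijection between clusters and cluster-tilting objects gives you one direction (infinitely many cluster variables force infinitely many rigid, reachable indecomposables), but the converse fails as an argument: indecomposable objects of $\mc{C}(k,n)$ need not be rigid, of rank one, or reachable (there are higher-rank modules, and in infinite type most indecomposables are not summands of any cluster-tilting object), so finiteness of the set of cluster variables does not bound the number of isomorphism classes of indecomposables. The argument that actually works, and the one underlying \cite{JKS16}, is to use the identification of $\underline{\mc{C}(k,n)}$ with the stable category of ($\mu_n$-equivariant) maximal Cohen--Macaulay modules over the plane curve singularity $x^k-y^{n-k}$ and invoke the classification of hypersurface singularities of finite CM type \cite{BGS, Yoshino}: $x^2-y^{n-2}$ is $A_{n-3}$, $x^3-y^3$ is $D_4$, $x^3-y^4$ is $E_6$, $x^3-y^5$ is $E_8$, and no other $x^k-y^{n-k}$ with $k\le \frac{n}{2}$ is simple. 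This simultaneously yields the finite-type list \emph{and} the Dynkin types claimed (and, for $k=2$, the fact that every indecomposable has rank one, which your part (2) needs for the last sentence about $\mc{C}(2,n)$), without having to compute endomorphism quivers case by case; Scott's classification of the cluster types then matches up with this list rather than proving it.
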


\begin{Rmk}
The category $\mc{C}(k,n)$ can also be combinatorially studied with the help of dimer models, see \cite{BKM16}. There is also research on understanding the higher rank modules and infinite type categories and connections to root combinatorics, we refer to \cite{BBGE18} for more details. \end{Rmk}

\begin{Ex} 
Consider $\mc{C}(2,n)=$ $\CM_{C_n}(\CC\llbracket x,y \rrbracket / \langle x^2-y^{n-2} \rangle)$. This category consists of $n \choose 2$ indecomposable objects $M_{ab}$ with $1 \leq a < b \leq n$, where the projective-injectives are $M_{i,i+1}$. The AR-quiver can be visualized as follows in Fig.~\ref{Fig:ARquiver2n} (note here that the quiver continues in both horizontal directions with entries as indicated - in particular there are only finitely many non-isomorphic indecomposable objects):
\begin{figure}[h!]
\begin{tikzpicture}
\draw (0,6)  node (M1n)   {$M_{1,2}$};
\draw (2,6)  node (Mn-1n) {$M_{2,3}$};
\draw (4,6)  node (Mn-2n-1) {$M_{3,4}$};
\draw (10,6) node (M23)   {$M_{n-1,n}$};
\draw (12,6) node (M12r)  {$M_{1,n}$};
\draw (1,5)  node (M1n-1) {$M_{1,3}$};
\draw (3,5)  node (Mn-2n) {$M_{2,4}$};
\draw (9,5)  node (M24)   {$M_{n-2,n}$};
\draw (11,5) node (M13r)  {$M_{n-1,1}$};
\draw (2,4)  node (M1n-2) {$M_{1,4}$};
\draw (8,4)  node (M25)   {$M_{n-3,n}$};
\draw (10,4) node (M14r)  {$M_{n-2,1}$};
\draw (4,2)  node (M13)   {$M_{1,n-1}$};
\draw (6,2)  node (M2n)   {$M_{2,n}$};
\draw (8,2)  node (M1n-1r){$M_{1,3}$};
\draw (5,1)  node (M12)   {$M_{1,n}$};
\draw (7,1)  node (M1nr)  {$M_{1,2}$};

\foreach \s/\t in {1n/1n-1, 1n-1/n-1n, 1n-1/1n-2, n-1n/n-2n, 1n-2/n-2n, n-2n/n-2n-1,
                   23/13r, 13r/12r, 24/23, 24/14r, 14r/13r, 25/24, 13/12, 12/2n, 2n/1nr, 1nr/1n-1r}
{\path [->] (M\s) edge (M\t);}

\foreach \s/\t in {1n-1/n-2n, 24/13r, 25/14r, 13/2n, 2n/1n-1r}
{\path [dashed] (M\s) edge (M\t);}

\foreach \x/\y/\a/\b in {6/6/n-3/n-2, 8/6/3/4, 5/5/n-3/n-1, 7/5/3/5,
                         4/4/n-3/n, 3/3/1/n-3, 5/3/n-4/n, 7/3/2/6, 9/3/1/5}
{\draw (\x,\y) node (M\a\b) {};}

\foreach \s/\t in {n-2n-1/n-3n-1, n-2n/n-3n, 1n-2/1n-3, 34/24, 35/25, 26/25, 15/14r}
{\path [->] (M\s) edge (M\t);}

\foreach \s/\t in {1n-3/13, n-3n/n-4n, 2n/26, 1n-1r/15, n-2n-1/34, n-3n-1/35}
{\path [dotted] (M\s) edge (M\t);}
\end{tikzpicture}

\caption{The Auslander--Reiten quiver of $\mc{C}(2,n)$.}
\label{Fig:ARquiver2n}
\end{figure}
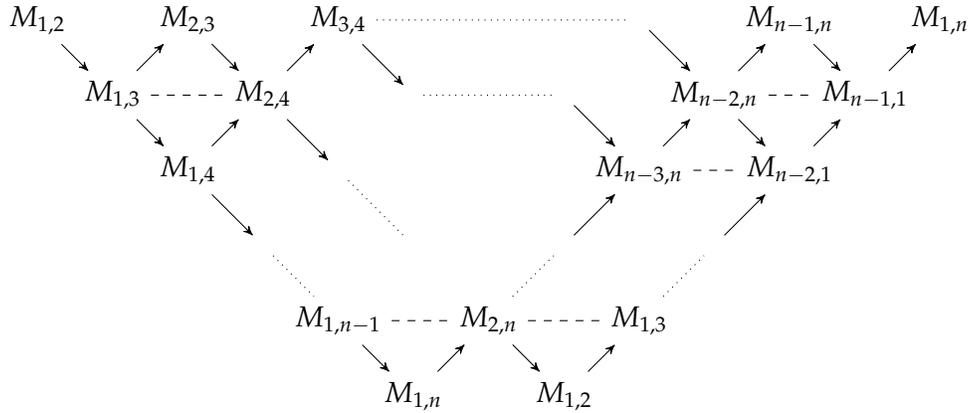
Here note that the AR-translation (dashed edges) is only defined for non-projective-injective objects. Now choose a triangulation $T$ of an $n$-gon $P_n$. Using the bijection between triangulations of $n$-gons and CC-friezes from Theorem \ref{Thm:CoCo} one  can easily see that setting each $M_{i,i+1}=1$ (cyclically modulo $n$) and each $M_{ab}$ for $(a,b) \in T$ equal to $1$ will yield a CC-frieze. In the next Section we will explain that this also follows from properties of the cluster character which associates to each indecomposable object of $\mc{C}(k,n)$ an element in $\mc{A}(k,n)$.
\end{Ex}

\begin{Ex}[AR-quivers for the other finite type $\mc{C}(k,n)$] The AR-quivers for $\mc{C}(3,n)$ for $n \in \{6,7,8 \}$ are of type $D_4, E_6, E_8$, respectively (plus some additional projectives) and are described in \cite{JKS16}. For infinite type Grassmannian cluster categories there are some partial results, see \cite{BaurIsfahan} for an overview.
\end{Ex}

\section{Friezes from representation theory} \label{Sec:friezes-cat}

In this section we first consider cluster characters and then show how to obtain several of the frieze types introduced in Section \ref{Sec:friezes} from them. In particular, we have a correspondence between the Frobenius categories $\mc{C}(2,n)$, the cluster structure of the Grassmannian $\mc{A}(2,n)$, triangulations of an $n$-gon $P_n$, and CC-friezes of width $n-3$, see the overview Table in Fig.~\ref{Fig:Correspondence}.

For Grassmannians of higher rank, i.e., $k \geq 3$, one still obtains tame $SL_k$-friezes, via Pl\"ucker friezes, see Section \ref{Sec:Pluecker}. Finally we will discuss Grassmannian cluster categories of infinite rank and frieze structures obtained from them in Section \ref{Sub:infinite}.

\subsection{Cluster character and friezes} \label{Sub:cluster-char}

In the previous sections it was described how to move between a triangulation of an $n$-gon, the cluster algebra $\mc{A}(2,n)$ (resp.~$\mc{A}_Q$, where $Q$ is a quiver mutation equivalent to a Dynkin quiver of type $A_{n-3}$), the cluster category $\mc{C}(2,n)$ (resp.~ $\mc{C}_Q$) and CC-friezes. Here we describe the map from the category to the frieze: after choosing a cluster tilting object $T$, the cluster character  associates to each indecomposable object in the category a Laurent polynomial in the initial cluster corresponding to $T$ in the cluster algebra and it satisfies various multiplication formulas. 

The \emph{Caldero--Chapoton map} (or \emph{cluster character}) was first considered by Caldero and Chapoton for cluster  categories \cite{CalderoChapoton}, in particular, they noted the connection to friezes (in the case where the associated quiver is a type $A_{n}$ Dynkin quiver). Then it has been extended to more general settings over the last two decades, notably by Palu \cite{Palu} for $2$-Calabi--Yau triangulated categories with cluster tilting objects, and Fu and Keller for Frobenius exact categories. A version for extriangulated categories is due to Wang, Wei, and Zhang \cite{WWZ2}. See \cite{Plamondon-CIMPA} for an introduction to cluster characters.

The definition of a cluster character is due to Palu \cite{Palu}, we need it for two cases: the cluster category or generalized cluster category $\mc{C}$ and the Frobenius exact category $\mc{C}(k,n)$. In order to keep the presentation compact we state the definition for extriangulated Frobenius categories, which encompasses both cases (cf.~\cite[Definition 4.1]{WWZ2}).

\begin{defi} \label{Def:CC}
Let $\mc{F}$ be a $2$-Calabi--Yau extriangulated category. A \emph{cluster character} on $\mc{F}$ with values in a commutative ring $R$ is a map $\cc: \rm{obj} (\mc{F}) \xrightarrow{}$ $R$ satisfying
\begin{enumerate}[(a)]
\item If $M \cong N$, then $\cc(M)=\cc(N)$;
\item\label{CC:Product} For any objects $M,N \in \mc{F}$ it holds that $\cc(M \oplus N)=\cc(M) \cdot \cc(N)$;
\item \label{Exchange-seq} For any objects $M,N \in \mc{F}$ if $\dim_K \mathbb{E}(N,M)=1$ and
\begin{equation} \label{Eq:exchange-seq}
 \begin{tikzcd}N \arrow{r}&  L \arrow{r} & M \arrow[dashed]{r} & \phantom{l} \end{tikzcd} \text{ and \ } \begin{tikzcd}M \arrow{r}&  L' \arrow{r} & N \arrow[dashed]{r} & \phantom{l} \end{tikzcd}
 \end{equation}
are nonsplit $\mathbb{E}$-triangles, then $\cc(M) \cdot \cc(N)=\cc(L) + \cc(L')$.
\end{enumerate}
\end{defi}

\begin{Rmk}
The $K$-bilinear functor $\mathbb{E}: \mc{F} \times \mc{F} \xrightarrow{} \mmod K$, where $K$ is a field, in \eqref{Exchange-seq} above is part of the definition of an extriangulated category, see \cite{NakaokaPalu}. In the case where $\mc{F}$ is an exact category, $\mathbb{E}(M,N)=\Ext^1_{\mc{F}}(M,N)$ for any objects $M,N \in \mc{F}$, and in the case where $\mc{F}$ is a triangulated $2$-CY category $\mathbb{E}(M,N)=\Hom_{\mc{F}}(M,N[1])$.
Moreover the conflations in \eqref{Eq:exchange-seq} are nonsplit exact sequences
\[  \begin{tikzcd} 0 \arrow{r} &N \arrow{r}&  L \arrow{r} & M \arrow{r} & 0\end{tikzcd} \text{ and } \begin{tikzcd}0 \arrow{r} & M \arrow{r}&  L' \arrow{r} & N \arrow{r} & 0 \end{tikzcd} \]
in the exact case, and nonsplit triangles
\[ \begin{tikzcd}N \arrow{r}&  L \arrow{r} & M \arrow{r} & N[1] \end{tikzcd} \text{ and } \begin{tikzcd}M \arrow{r}&  L' \arrow{r} & N \arrow{r} & M[1] \end{tikzcd} 
\]
in the triangulated case. 
\end{Rmk}

One can give an explicit formula for the cluster character in terms of quiver Grassmannians of submodules in both cases, however, since we are interested in friezes here, we will omit these expressions and refer the interested reader to the references cited above.

However, we note the following:

\begin{fact} \label{Fact:CC}
\begin{enumerate}[(1)]
\item Let $\mc{C}$ be a (generalized) cluster category of type $A_n$ with cluster tilting object $T=\bigoplus_{i=1}^n T_i$, let $B_T$ the corresponding cluster tilted algebra, and let $\cc_T: \mc{C} \xrightarrow{} \QQ(x_1, \ldots, x_n)$ be a cluster character. Then $\cc_T(T_i[1])=x_i$ and $\cc_T(M)$ is a Laurent polynomial with positive coefficients in the $x_i$ for any $M \in \mmod{B_T}$ (because each indecomposable is sent to a cluster variable, see e.g. \cite[Prop.~2.3]{FuKeller}, and by the positivity phenomenon, each cluster variable is a Laurent polynomial with positive coefficients in the initial cluster).
\item \label{Fact:CC-Frobenius} A cluster character exists for $\mc{C}(k,n)$, see \cite{JKS16}. This cluster character, also denoted by $\cc_T$ by abuse of notation, satisfies the properties of the Fu--Keller cluster character. In particular, assume that $T=\bigoplus_{i} T_i$ is a cluster tilting object with pairwise nonisomorphic indecomposable direct summands $T_i$ corresponding to Pl\"ucker coordinates $p_I$, that is, each index $i$ corresponds to a $k$-subset $I \subseteq [n]$ (such a cluster exists by \cite{Scott06}). Then $\cc_T(T_i)=p_I$ and $\cc_T(M)$ is a Laurent polynomial with positive coefficients in the $p_I$ for any  indecomposable $M \in \mc{C}(k,n)$ that is reachable from $T$ (same argument as above, but using \cite[Thm.~3.3]{FuKeller}).
\item For $\mc{C}(2,n)$ with a cluster tilting object $T=\bigoplus_{l=1}^{n-3}T_l \oplus \bigoplus_{l=1}^n M_{l,l+1}$, where each $T_l$ corresponds to a rigid rank $1$ module $M_{i_l,j_l}$,  we have 
$$\cc_T: \mc{C}(2,n) \xrightarrow{} \QQ(p_{i_1,j_1}, \ldots, p_{i_{n-3},j_{n-3}}, p_{1,2}, \ldots, p_{n-1,n},p_{1,n})$$ with $\cc_T(T_l)=p_{i_l,j_l}$ and $\cc_T(M_{l,l+1})=p_{l,l+1}$. 
\end{enumerate}
\end{fact}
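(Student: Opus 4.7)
The plan is to establish each of the three items by invoking the Caldero-Chapoton formalism at the appropriate level of generality. The common starting point is that for a Hom-finite $2$-Calabi-Yau extriangulated category $\mc{F}$ equipped with a cluster tilting object $T = \bigoplus_i T_i$, the Caldero-Chapoton expression, built from Euler characteristics of submodule Grassmannians of $M$ viewed as a module over $\End_{\mc{F}}(T)^{\mathrm{op}}$, defines a map $\zeta_T$, whose being a cluster character (axioms (a)-(c)) reduces to the $2$-Calabi-Yau property combined with a long exact sequence in $\mathbb{E}$.

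For item (1), I would invoke the Caldero-Keller theorem, refined by Fu-Keller: for a Dynkin quiver $Q$, $\zeta_T$ induces a bijection between isomorphism classes of indecomposable rigid objects of $\mc{C}_Q$ and cluster variables in $\mc{A}_Q$, with the normalization $\zeta_T(T_i[1]) = x_i$ built into the formula. Since any indecomposable in $\mmod{B_T}$ corresponds, under the equivalence $\mc{C}_Q/\langle \add(T)[1]\rangle \simeq \mmod{B_T}$ recalled in Section~\ref{Sub:cluster-cat-triang}, to an indecomposable of $\mc{C}_Q$ not in $\add(T)[1]$, its image is a cluster variable of $\mc{A}_Q$. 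Positivity of its Laurent expansion in $x_1, \ldots, x_n$ is then the Lee-Schiffler positivity theorem, and in type $A$ can be made combinatorially explicit.

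For item (2), the construction of $\zeta_T$ on the Frobenius $2$-CY exact category $\mc{C}(k,n)$ is precisely Fu-Keller's adaptation to Frobenius exact categories, whose hypotheses are verified by the categorification theorem of \cite{JKS16}. Scott's realization \cite{Scott06} of the cluster structure on $\mc{A}(k,n)$ via Postnikov diagrams exhibits clusters whose variables are Pl\"ucker coordinates $p_I$; together with the bijection between rank-one indecomposables $M_I$ of $\mc{C}(k,n)$ and $k$-subsets $I \subseteq [n]$ from Fact~\ref{Fact:JKS-cat}(2), this pairs the $M_I$'s with the $T_i$'s and forces $\zeta_T(M_I) = p_I$. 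Positivity of $\zeta_T(M)$ on reachable indecomposables then follows from the Laurent phenomenon together with the positivity theorem for $\mc{A}(k,n)$.

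Item (3) is a direct specialization of (2) to $k=2$: by Fact~\ref{Fact:JKS-cat}(3), a cluster tilting object in $\mc{C}(2,n)$ consists of $n$ projective-injective summands $M_{l,l+1}$ corresponding to the boundary arcs of a triangulation of $P_n$ and $n-3$ mutable rigid rank-one summands $T_l = M_{i_l, j_l}$ corresponding to its diagonals; reading off the normalization from (2) gives the stated equalities. The main obstacle throughout is verifying axiom (c) of the cluster character in the Frobenius setting, in particular ensuring that the projective-injective summands of $T$ behave as frozen (coefficient) variables rather than as mutable ones in the exchange relations $\zeta_T(X)\zeta_T(Y) = \zeta_T(L)+\zeta_T(L')$. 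This is the technical core of the Fu-Keller-Palu machinery and relies on the exchange conflations of type \eqref{Eq:exchange-seq}, whose existence is guaranteed by the Frobenius structure established in \cite{JKS16}.
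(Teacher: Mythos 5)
Your proposal is correct and follows essentially the same route as the paper, which justifies these facts by citation: Fu--Keller's cluster character (\cite[Prop.~2.3, Thm.~3.3]{FuKeller}) sending summands of $T$ to initial cluster variables and indecomposables to cluster variables, the categorification of \cite{JKS16} together with Scott's Pl\"ucker clusters \cite{Scott06}, positivity of Laurent expansions, and specialization to $k=2$ via Fact~\ref{Fact:JKS-cat}. The only addition you make is spelling out the underlying Caldero--Chapoton/extriangulated machinery and the rigidity of indecomposables in Dynkin type $A$, which is consistent with (and implicit in) the paper's references.
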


\begin{Lem} \label{Lem:Pluecker-frieze-coeff}
Consider $\mc{C}(2,n)$ with a cluster tilting object $T=\bigoplus_{l=1}^{n-3} M_{i_l,j_l} \oplus \bigoplus_{l=1}^n M_{l,l+1}$ and corresponding cluster character $\cc_T: \mc{C}(2,n) \xrightarrow{} \QQ(p_{i_1,j_1}, \ldots, p_{i_{n-3},j_{n-3}}, p_{12}, \ldots, p_{1n})$ and look at the AR-quiver of $\mc{C}(2,n)$. Apply the cluster character to each indecomposable module, forget the arrows and add each  a row of $0$'s on top and on the bottom. \\
Then the entries of rows 2 to $n-2$ of the resulting array are all non-zero in \\ $\QQ(p_{i_1,j_1}, \ldots, p_{i_{n-3},j_{n-3}}, p_{12}, \ldots, p_{1n})$ and satisfy the relations for a frieze with coefficients with $c_{i,i+1}=p_{i,i+1}$. \end{Lem}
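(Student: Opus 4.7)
The plan proceeds in three steps: (i) compute $\zeta_T$ on every indecomposable of $\mc{C}(2,n)$, (ii) check non-vanishing, and (iii) match each diamond in the resulting array against a Plücker relation.

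For step (i), I claim $\zeta_T(M_{a,b}) = p_{a,b}$ for each rank-one indecomposable $M_{a,b}$ with $1 \le a < b \le n$. On the indecomposable summands of $T$ this is Fact~\ref{Fact:CC}(3). For an arbitrary $M_{a,b}$, the arc $(a,b)$ belongs to some triangulation of $P_n$, so $p_{a,b}$ is a cluster variable of $\mc{A}(2,n)$ reachable from $T$ by successive mutations; the bijection between rank-one indecomposables of $\mc{C}(2,n)$ and $2$-subsets of $[n]$ from Fact~\ref{Fact:JKS-cat}(2), combined with Fact~\ref{Fact:CC}(2), then forces $\zeta_T(M_{a,b}) = p_{a,b}$. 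In particular, the projective-injective indecomposables $M_{i,i+1}$, which appear in the first and the last nontrivial rows of the AR-quiver (Fig.~\ref{Fig:ARquiver2n}), contribute the entries $p_{i,i+1}$ that will play the role of the boundary coefficients $c_{i,i+1}$. Step (ii) is then immediate: every entry in the interior rows is a Plücker coordinate and hence, by Fact~\ref{Fact:CC}(2), a non-zero Laurent polynomial in $\CC(p_{i_1,j_1},\dots,p_{i_{n-3},j_{n-3}},p_{1,2},\dots,p_{1,n})$.

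For step (iii), I would read off from the AR-quiver of Fig.~\ref{Fig:ARquiver2n} that the position $(i,j)$ of the pattern in Def.~\ref{Def:frieze-coeffs} corresponds to the indecomposable $M_{i,j}$ (indices modulo $n$). A generic interior diamond therefore has entries $p_{i,j}, p_{i+1,j+1}, p_{i+1,j}, p_{i,j+1}$, and the frieze-with-coefficients relation $m_{i,j} m_{i+1,j+1} - m_{i+1,j} m_{i,j+1} = c_{i,i+1} c_{j,j+1}$ becomes
\[
p_{i,j}\, p_{i+1,j+1} \;-\; p_{i+1,j}\, p_{i,j+1} \;=\; p_{i,i+1}\, p_{j,j+1},
\]
which is the Plücker relation~\eqref{Eq:pluecker-rel} applied to the cyclic quadrilateral $(i,i+1,j,j+1)$. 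A more intrinsic derivation uses only the cluster character axioms: since $M_{i,j}$ and $M_{i+1,j+1}$ are crossing in the sense of Example~\ref{Ex:Grassmannian-kn}, one has $\dim_\CC \Ext^1_{\mc{C}(2,n)}(M_{i,j},M_{i+1,j+1}) = 1$, and the two non-split exchange short exact sequences have middle terms $M_{i,i+1} \oplus M_{j,j+1}$ and $M_{i+1,j} \oplus M_{i,j+1}$ (the two planar resolutions of the crossing in the quadrilateral $(i,i+1,j,j+1)$); property~(c) of $\zeta_T$ then delivers the same identity.

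The main technical obstacle is the bookkeeping: matching the AR-quiver's layout with the indexing of Def.~\ref{Def:frieze-coeffs}, and separately checking the diamonds adjacent to the projective-injective rows and to the added zero rows. In those boundary cases the identity either reproduces the prescribed boundary values $m_{i,i}=0$ and $m_{i,i+1}=p_{i,i+1}$, or collapses to a trivially valid Plücker-type identity.
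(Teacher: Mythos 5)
Your proposal is correct, and your ``more intrinsic derivation'' in step (iii) is precisely the paper's proof: for $|i-j|>1$ one takes the AR-sequence $0 \to M_{i,j} \to M_{i,j+1}\oplus M_{i+1,j} \to M_{i+1,j+1} \to 0$ together with the reversed exchange sequence $0 \to M_{i+1,j+1} \to M_{i,i+1}\oplus M_{j,j+1} \to M_{i,j} \to 0$, notes $\dim_\CC\Ext^1_{\mc{C}(2,n)}(M_{i+1,j+1},M_{i,j})=1$, and applies the multiplicativity and exchange axioms of the cluster character, which yields exactly $\zeta_T(M_{i,j})\zeta_T(M_{i+1,j+1})=\zeta_T(M_{i,j+1})\zeta_T(M_{i+1,j})+p_{i,i+1}p_{j,j+1}$; non-vanishing and the periodicity conditions come from Fact~\ref{Fact:CC} and the periodicity of the AR-quiver, as you indicate. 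Your \emph{primary} route is slightly different: you first identify $\zeta_T(M_{a,b})=p_{a,b}$ for \emph{every} rank-one indecomposable and then read each diamond as the three-term Pl\"ucker relation for the quadrilateral $(i,i+1,j,j+1)$. That works, but be aware that the identification is not quite ``forced'' by combining Fact~\ref{Fact:JKS-cat}(2) with Fact~\ref{Fact:CC}(2)--(3) as stated: those facts give $\zeta_T(T_l)=p_{i_l,j_l}$ for the summands of $T$ and Laurent positivity for reachable modules, and the mere coexistence of two bijections does not by itself guarantee they are compatible; that $\zeta_T(M_I)=p_I$ for all rank-one modules is part of the JKS categorification theorem and should be cited as such. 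The paper's (and your alternative) argument deliberately avoids this stronger input, needing only $\zeta_T(M_{l,l+1})=p_{l,l+1}$ and non-vanishing, which is what makes it self-contained at the level of the stated Facts; your Pl\"ucker-relation route buys a cleaner global picture (every entry literally is a Pl\"ucker coordinate) at the cost of invoking the full JKS identification.
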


\begin{proof}
W.l.o.g. assume that all our modules are in the fundamental domain of the AR-quiver, depicted in Fig.~\ref{Fig:ARquiver2n}. The AR-sequences are of the form
\[ 0 \xrightarrow{} M_{i,j} \xrightarrow{} M_{i,j+1} \oplus M_{i+1,j} \xrightarrow{} M_{i+1,j+1} \xrightarrow{} 0 \ \]
for $|i-j|>1$. Note that there are no AR-sequences starting or ending at the projective-injective modules $M_{i,i+1}$. By definition, each AR-sequence is a nonsplit short exact sequence with $\dim_{\CC}\Ext^1_{\mc{C}(2,n)}(M_{i+1,j+1}, M_{ij})=1$.
The corresponding sequence with start and endterm reversed is
\[ 0 \xrightarrow{} M_{i+1,j+1} \xrightarrow{} M_{i,i+1} \oplus M_{j,j+1} \xrightarrow{} M_{i,j} \xrightarrow{} 0 \ .\]
Applying product and sum formula \eqref{CC:Product} and \eqref{Exchange-seq} from Definition \ref{Def:CC}, we obtain the  frieze relation from Definition \ref{Def:frieze-coeffs}:
\begin{align*} \cc_T(M_{i,j}) \cc_T(M_{i+1,j+1}) & = \cc_T(M_{i,j+1}) \cc_T(M_{i+1,j})+\cc_T(M_{i,i+1}) \cc_T(M_{j,j+1}) \\
& = \cc_T(M_{i,j+1}) \cc_T(M_{i+1,j}) +p_{i,i+1} p_{j,j+1} \ .
\end{align*}
All $\cc_T(M_{ij})$ are nonzero Laurent polynomials in $\QQ(p_{i_1,j_1}, \ldots, p_{i_{n-3},j_{n-3}}, p_{12}, \ldots, p_{1n})$, see Fact \ref{Fact:CC}. Further from the periodicity of the AR-quiver, the periodicity assumptions on the entries of a frieze with coefficients from Definition \ref{Def:frieze-coeffs} are satisfied. Hence the claim follows.
\end{proof}

\begin{Prop} \label{Prop:frieze-from-CC}
There is a $1-1$-correspondence between cluster tilting objects in $\mc{C}(2,n)$ and CC-friezes of width $n-3$. In particular, each such CC-frieze is obtained by applying the cluster character to the entries of the AR-quiver of $\mc{C}(2,n)$ (and forgetting the arrows in the quiver plus adding a top and bottom row of $0$'s to each frieze). 
\end{Prop}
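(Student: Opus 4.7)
The plan is to combine Lemma \ref{Lem:Pluecker-frieze-coeff} with a specialization argument and then invoke the Conway--Coxeter bijection (Theorem \ref{Thm:CoCo}).

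Start with a cluster tilting object $T=\bigoplus_{l=1}^{n-3} M_{i_l,j_l} \oplus \bigoplus_{l=1}^{n} M_{l,l+1}$ in $\mc{C}(2,n)$; by Fact \ref{Fact:JKS-cat}(3), the $n-3$ non-projective-injective summands correspond to a maximal non-crossing set of Pl\"ucker coordinates, i.e.\ to the diagonals of a triangulation $\Delta_T$ of $P_n$. Applying $\zeta_T$ entrywise to the AR-quiver of $\mc{C}(2,n)$ (Fig.~\ref{Fig:ARquiver2n}) yields, by Lemma \ref{Lem:Pluecker-frieze-coeff}, a frieze with coefficients $c_{i,i+1}=p_{i,i+1}$ in the sense of Definition \ref{Def:frieze-coeffs}. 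Now specialize each $p_{i_l,j_l}$ and each $p_{i,i+1}$ to $1$. By Fact \ref{Fact:CC}(2), every indecomposable of $\mc{C}(2,n)$ is reachable from $T$ (finite cluster type), so each $\zeta_T(M_{ab})$ is a Laurent polynomial with positive integer coefficients in these variables, hence specializes to a positive integer. Under specialization, the frieze-with-coefficients rule
\[
m_{ij}m_{i+1,j+1}-m_{i,j+1}m_{i+1,j}=p_{i,i+1}p_{j,j+1}
\]
becomes the unimodular rule \eqref{Eq:diamond}, and the row of $\zeta_T(M_{i,i+1})=p_{i,i+1}$ becomes a row of $1$'s. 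Adjoining the rows of $0$'s on top and bottom produces a CC-frieze $\mc{F}_T$ of width $n-3$.

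Next I would identify $\mc{F}_T$ with the Conway--Coxeter frieze of $\Delta_T$. By Theorem \ref{Thm:CoCo}, $\mc{F}_T$ corresponds to a unique triangulation $\Delta'$ of $P_n$, and by Remark \ref{Rmk:1-in-frieze} the $1$-entries of $\mc{F}_T$ in the interior rows correspond precisely to the diagonals of $\Delta'$. By construction every summand $M_{i_l,j_l}$ of $T$ occupies a specific position in the AR-quiver and produces the entry $p_{i_l,j_l}$, which specializes to $1$; thus all diagonals of $\Delta_T$ appear as $1$-entries of $\mc{F}_T$, giving $\Delta_T\subseteq\Delta'$. Since both are triangulations of $P_n$ with exactly $n-3$ diagonals, $\Delta'=\Delta_T$, and $\mc{F}_T$ is the Conway--Coxeter frieze associated with $\Delta_T$.

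For the bijectivity claim, I would chain two correspondences. The assignment $T\mapsto\Delta_T$ is a bijection between cluster tilting objects in $\mc{C}(2,n)$ and triangulations of $P_n$ (maximal non-crossing sets of rank-$1$ modules versus maximal non-crossing sets of diagonals), and the assignment $\Delta\mapsto\mc{F}_\Delta$ is the Conway--Coxeter bijection (Theorem \ref{Thm:CoCo}) onto CC-friezes of width $n-3$. The composite is $T\mapsto\mc{F}_T$, and its inverse sends a CC-frieze $\mc{F}$ to its triangulation $\Delta$ (via Theorem \ref{Thm:CoCo}) and then to $T=\bigoplus_{(i,j)\in\Delta}M_{ij}\oplus\bigoplus_l M_{l,l+1}$.

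The main obstacle is the identification $\mc{F}_T=\mc{F}_{\Delta_T}$: positivity of the cluster character guarantees we land in the world of CC-friezes, but one must explain why we land on the \emph{correct} CC-frieze. The counting argument via Remark \ref{Rmk:1-in-frieze} and the fixed cardinality $n-3$ of diagonals circumvents having to compute the quiddity sequence explicitly (a more laborious alternative would be to show $\zeta_T(M_{i-1,i+1})|_{p=1}$ equals the number of triangles of $\Delta_T$ incident to vertex $i$ by induction on the structure of $\Delta_T$).
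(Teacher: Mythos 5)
Your proposal is correct and follows essentially the same route as the paper: the $1$--$1$-correspondence is obtained by composing the bijection between cluster tilting objects and triangulations of $P_n$ with the Conway--Coxeter bijection, and the frieze itself comes from Lemma \ref{Lem:Pluecker-frieze-coeff} together with specialization of the initial Pl\"ucker coordinates to $1$ and positivity of the cluster character (Fact \ref{Fact:CC}), with the converse direction recovered from the maximal set of $1$'s via Remark \ref{Rmk:1-in-frieze}. The only difference is that you additionally verify, via the counting argument with Remark \ref{Rmk:1-in-frieze}, that the specialized frieze $\mc{F}_T$ is exactly the Conway--Coxeter frieze of the triangulation $\Delta_T$ attached to $T$ --- a compatibility the paper's proof leaves implicit --- which is a welcome refinement rather than a departure.
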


\begin{proof}
We already know the correspondence between cluster tilting objects in $\mc{C}(2,n)$ and triangulations of an $n$-gon (see Example \ref{Ex:Gr(2,n)}). Combined with the bijection of triangulations of an $n$-gon and CC-friezes of width $n-3$ from the Conway--Coxeter Theorem \ref{Thm:CoCo} the first statement follows. \\
Let now $T$ be a cluster tilting object for $\mc{C}(2,n)$. From Lemma \ref{Lem:Pluecker-frieze-coeff} we get a frieze with coefficients and entries Laurent series in the Pl\"ucker coordinates $p_{i_1,j_1}, \ldots, p_{i_{n-3},j_{n-3}}, p_{12}, \ldots, p_{1n}$ corresponding to the arcs in the triangulation $T$. Specializing all these $p_{ij}=1$ shows that the second and penultimate row consist only of $1$'s and the frieze relation \eqref{Eq:diamond} is satisfied by all entries. Since all entries are Laurent series with positive coefficients in the $p_{i_1,j_1}, \ldots, p_{i_{n-3},j_{n-3}}, p_{12}, \ldots, p_{1n}$ (see Fact \ref{Fact:CC} \eqref{Fact:CC-Frobenius}), it follows that they are all positive integers. Hence all properties for a CC-frieze are satisfied. \\
Conversely, picking a maximal set of $1$'s in the fundamental domain of a CC-frieze of width $n-3$ corresponds to a triangulation of an $n$-gon, cf.~Remark \ref{Rmk:1-in-frieze}, and hence to a cluster tilting object in $\mc{C}(2,n)$.
\end{proof}

 We have shown that in the Frobenius case we obtain all CC-friezes via applying the cluster character w.r.t. a cluster tilting module $T \in \mc{C}(2,n)$ to the AR-quiver of $\mc{C}(2,n)$. However, Caldero and Chapoton already observed  in their foundational paper \cite[Section 5]{CalderoChapoton} the corresponding statement for the cluster category of type $A$. We briefly explain the construction for the generalized cluster category of type $A_{n-3}$: 
 
Let $\mc{C} = \mc{C}_{(Q,W)}$ be a generalized cluster category of type $A_{n-3}$ and let $T = \bigoplus_{i=1}^{n-3} T_i$ be a cluster  tilting object of $\mc{C}$, with pairwise non-isomorphic direct summands $T_i$. Let $B_T = \End_\mc{C} (T)$ be the cluster-tilted algebra associated to $T$. Then each indecomposable object in $\mc{C}$ is either an indecomposable $B_T$-module or $T_i[1]$, a shift of a direct summand of the cluster tilting object. The \emph{specialized Caldero Chapoton map} is the map obtained from postcomposing the Caldero Chapoton map $\cc_T$ (explicit formula in \cite[Section 3]{CalderoChapoton}) associated to $T$ with the specialization of the initial cluster variables to $1$:
\[
 \cc_T(M) = \begin{cases}
              1 & \text{ if } M = T_i[1] \\
              \sum_{\underline{e}}\chi(Gr_{\underline{e}}(M)) & \text{ if $M$ is a $B_T$-module}.
             \end{cases}
\]
Here, $Gr_{\underline{e}}(M)$ denotes the Grassmannian of submodules of the $B_T$-module $M$ with dimension vector $\underline{e}$ and $\chi$ is the Euler-Poincar\'e characteristic.
If $\mc{C}$ is of type $A_{n-3}$ and $T$ is a cluster-tilting object of $\mc{C}$, then for every indecomposable $B_T$-module $M$ the Grassmannian $Gr_{\underline{e}}(M)$ is either empty or a point, 
therefore the above formula simplifies to
\[
 \cc_T(M) = \sum_{N \subseteq M} 1 ,
\]
where the sum goes over submodules of $M$ (cf. \cite[Example~3.2]{CalderoChapoton}). There is also an explicit formula in terms of the shape of the string module $M$ over $B_T$, see \cite[Theorem 4.6]{BFGST18}.

\begin{Rmk}
Here we only scratch the surface of cluster characters and cluster structures on triangulated/Frobenius/extriangulated categories, for a more in depth treatment see \cite{BIRS}.
\end{Rmk}

We end this section with an overview over the correspondences between the combinatorial model (triangulated polygon) $P_n$, Grassmannian cluster algebras $\mc{A}(2,n)$ and Grassmannian cluster categories $\mc{C}(2,n)$ and CC-friezes and their various properties in Figure \ref{Fig:Correspondence}. It is left as an exercise for the reader to adapt the correspondences to the classical case of cluster algebras of type $A_{n}$ and (generalized) cluster categories of type $A_n$.

\subsection{Pl\"ucker friezes and tame integral $SL_k$-friezes} \label{Sec:Pluecker}

We start with defining Pl\"ucker friezes that are obtained from the Grassmannian cluster algebras $\mc{A}(k,n)$. These were first studied in \cite{BFGST2}, which is the main reference for this section. Again, with the use of the specialized cluster character on $\mc{C}(k,n)$ one can obtain tame integral $SL_k$-friezes. However, for $k \geq 3$ not all such friezes are obtained this way. We will in particular discuss the missing finite type cases and pose some questions. \\
Note that recently Pl\"ucker friezes were used to study properties of $SL_k$-tilings, see \cite{PetersenSerhiyenko}.

\begin{landscape}
\begin{figure}[h]
{\tiny
\begin{tabular}{|c|c|c|c|}
\hline
{\bf Combinatorics} & {\bf CC-Frieze } & \multicolumn{2}{|c|}{\begin{minipage}{10cm}
\begin{minipage}{4cm} \begin{centering}{\bf Cluster Algebra} \end{centering} \end{minipage} \begin{minipage}{2cm} $\xleftarrow{\text{Cluster Character } \cc}$\end{minipage} \begin{minipage}{4cm}  {\bf \phantom{lllll}Cluster Category} \end{minipage}
\end{minipage} } \\
\hhline{|=|=|=|=|}
\begin{minipage}{4cm} Polygon $P_n$ with all diagonals \\
and all boundary edges \\
\begin{tikzpicture}[scale=1.5]

\foreach \i in {1,...,6}{
  \coordinate (A\i) at (60*\i:1);
}

\draw[thick] (A1)--(A2)--(A3)--(A4)--(A5)--(A6)--cycle;

\draw[thick] (A1)--(A3);
\draw[thick] (A1)--(A4);
\draw[thick] (A1)--(A5);
\draw[thick] (A2)--(A4);
\draw[thick] (A2)--(A5);
\draw[thick] (A2)--(A6);
\draw[thick] (A3)--(A5);
\draw[thick] (A3)--(A6);
\draw[thick] (A4)--(A6);

    \fill (A1) circle (1pt) node[shift={(60:0.25)}] {$j$};
   \fill (A4) circle (1pt) node[shift={(180:0.3)}] {$i$};

\end{tikzpicture}
\end{minipage} &  
\begin{minipage}{5cm} 
CC-frieze of width $w=n-3$ \\

\begin{tikzpicture}[scale=0.4]
\draw (0,6)  node (M1n)   {$1$};
\draw (2,6)  node (Mn-1n) {$1$};
\draw (4,6)  node (Mn-2n-1) {$1$};
\draw (10,6) node (M23)   {$1$};
\draw (12,6) node (M12r)  {$1$};
\draw (1,5)  node (M1n-1) {$m_{1,3}$};
\draw (3,5)  node (Mn-2n) {$m_{2,4}$};
\draw (9,5)  node (M24)   {$m_{n-2,n}$};
\draw (11,5) node (M13r)  {$m_{n-1,1}$};
\draw (2,4)  node (M1n-2) {$m_{1,4}$};
\draw (8,4)  node (M25)   {$m_{n-3,n}$};
\draw (10,4) node (M14r)  {$m_{n-2,1}$};
\draw (4,2)  node (M13)   {$m_{1,n-1}$};
\draw (6,2)  node (M2n)   {$m_{2,n}$};
\draw (8,2)  node (M1n-1r){$m_{1,3}$};
\draw (5,1)  node (M12)   {$1$};
\draw (7,1)  node (M1nr)  {$1$};

%

\foreach \x/\y/\a/\b in {6/6/n-3/n-2, 8/6/3/4, 5/5/n-3/n-1, 7/5/3/5,
                         4/4/n-3/n, 3/3/1/n-3, 5/3/n-4/n, 7/3/2/6, 9/3/1/5}
{\draw (\x,\y) node (M\a\b) {};}

\foreach \s/\t in {n-2n-1/n-3n-1, n-2n/n-3n, 1n-2/1n-3, 34/24, 35/25, 26/25, 15/14r}
{\path [dotted] (M\s) edge (M\t);}

\foreach \s/\t in {1n-3/13, n-3n/n-4n, 2n/26, 1n-1r/15, n-2n-1/34, n-3n-1/35}
{\path [dotted] (M\s) edge (M\t);}
\end{tikzpicture}
\end{minipage}
 & 
 \begin{minipage}{5cm}
 Grassmannian cluster algebra \\
 \[\mc{A}(2,n)=\CC[p_{ij}: 1 \leq i < j \leq n]/ I_P \ ,\]
 where $I_P$ is ideal generated by Pl\"ucker relations
 \end{minipage}
 & 
 \begin{minipage}{5cm} 
 Grassmannian cluster category \\
 \[ \mc{C}(2,n) \]
 \end{minipage}
 \\
\hline
\begin{minipage}{4cm}arcs $(i,j)$ in $P_n$  \\
(with $1 \leq i < j \leq n$ considered cyclically modulo $n$) \\
\begin{tikzpicture}[scale=1.5]

\foreach \i in {1,...,6}{
  \coordinate (A\i) at (60*\i:1);
}

\draw[thick] (A1)--(A2)--(A3)--(A4)--(A5)--(A6)--cycle;

\draw[thick] (A1)--(A4);

    \fill (A1) circle (1pt) node[shift={(60:0.25)}] {$j$};
   \fill (A4) circle (1pt) node[shift={(180:0.3)}] {$i$};

\end{tikzpicture}
\end{minipage}
& 
\begin{minipage}{5cm}
entries $m_{ij}$ in fundamental domain \\
(with $1\leq i < j \leq n$ and $|i-j|>1$, and $m_{i,i+1}=1$, considered cyclically modulo $n$) 
\end{minipage} 
&  
cluster variables $p_{ij}$ in $\mc{A}(2,n)$ 
 &  
 rigid rank $1$ modules $M_{ij}$ in $\mc{C}(2,n)$ \\
\hline
\begin{minipage}{4cm} Triangulation {\color{red} $T$} of $P_n$\\
\begin{tikzpicture}[scale=1.5]

\foreach \i in {1,...,6}{
  \coordinate (A\i) at (60*\i:1);
}

\draw[thick,color=red] (A1)--(A2)--(A3)--(A4)--(A5)--(A6)--cycle;

\draw[thick, color=red] (A1)--(A3);
\draw[thick, color=red] (A1)--(A4);
\draw[thick, color=red] (A1)--(A5);

    \fill (A1) circle (1pt) node[shift={(60:0.25)}] {$6$};
        \fill (A2) circle (1pt) node[shift={(60:0.25)}] {$5$};
     \fill (A5) circle (1pt) node[shift={(120:-0.25)}] {$2$};
   \fill (A4) circle (1pt) node[shift={(180:0.3)}] {$3$};
   \fill (A3) circle (1pt) node[shift={(240:0.3)}] {$4$};
      \fill (A6) circle (1pt) node[shift={(360:0.3)}] {$1$};

\end{tikzpicture}
\end{minipage}  
& 
\begin{minipage}{5cm}
maximal collections of $1$'s in a fundamental region \\
{\tiny
 \xymatrix@=0.3em{ 
   &\ldots & \rot{1} && \rot{1} && \rot{1} && \rot{1} && \rot{1} &   \ldots &\\
  &\ldots && \rot{1} && 2 && 2 && 2 &&    \ldots& \\
  &\ldots & \grau{3} && \rot{1} && 3 && 3 &&  \grau{1}  &  \ldots& \\
 &\ldots && \grau{2} && \rot{1} && 4 && \grau{1} &&    \ldots& \\
   &\ldots & \grau{1} && \grau{1} && \rot{1} && \grau{1} && \grau{1} &    \ldots & \\
  }
}
\end{minipage}
&
\begin{minipage}{4cm} $\{ p_{ij} \}$ maximal collection \\ of non-crossing Pl\"ucker coordinates
\end{minipage} & 
\begin{minipage}{4cm} 
cluster tilting object 
\[ T=\bigoplus_{l=1}^{n-3} M_{i_l,j_l} \oplus \bigoplus_{l=1}^n M_{l,l+1} \ , \]
where $|i_l-j_l|>1$ and $T$ is maximally rigid  in $\mc{C}(2,n)$ 
\end{minipage}\\
\hline
\begin{minipage}{5cm} flip of a diagonal $(i,j)$ in $T$ \\
\begin{minipage}{2cm}\centering
\begin{tikzpicture}[scale=0.8]

\foreach \i in {1,...,6}{
  \coordinate (A\i) at (60*\i:1);
}

\fill[lightgray] (A1)--(A3)--(A4)--(A5)--cycle;
\draw[thick] (A1)--(A2)--(A3)--(A4)--(A5)--(A6)--cycle;

\draw[thick] (A1)--(A3);
\draw[thick,color=red] (A1)--(A4);
\draw[thick] (A1)--(A5);

    \fill (A1) circle (1pt) node[shift={(60:0.25)}] {$i$};
     \fill (A5) circle (1pt) node[shift={(120:-0.25)}] {$k$};
   \fill (A4) circle (1pt) node[shift={(180:0.3)}] {$j$};
   \fill (A3) circle (1pt) node[shift={(240:0.3)}] {$l$};

\end{tikzpicture}
\end{minipage}
\begin{minipage}{0.5cm}
$\xrightarrow{\mu_{ij}}$
\end{minipage}
\begin{minipage}{2cm} \centering
\begin{tikzpicture}[scale=0.8]

\foreach \i in {1,...,6}{
  \coordinate (A\i) at (60*\i:1);
}

\fill[lightgray] (A1)--(A3)--(A4)--(A5)--cycle;
\draw[thick] (A1)--(A2)--(A3)--(A4)--(A5)--(A6)--cycle;

\draw[thick] (A1)--(A3);
\draw[thick] (A1)--(A5);
\draw[thick,color=red] (A3)--(A5);

    \fill (A1) circle (1pt) node[shift={(60:0.25)}] {$i$};
     \fill (A5) circle (1pt) node[shift={(120:-0.25)}] {$k$};
   \fill (A4) circle (1pt) node[shift={(180:0.3)}] {$j$};
   \fill (A3) circle (1pt) node[shift={(240:0.3)}] {$l$};

\end{tikzpicture}
\end{minipage}

\end{minipage} & \begin{minipage}{4cm}frieze mutation at entry $m_{ij}$, \\see \cite{BFGST18} for an extensive treatment \end{minipage}& cluster mutation at $p_{ij}$ & $\add(T/ M_{ij})$-approximation  \\
\hline
\end{tabular}
}
\caption{Correspondences} \label{Fig:Correspondence}
\end{figure}
\end{landscape}

We need to recall some terminology about Pl\"ucker coordinates in $\mc{A}(k,n)$ from \cite{BFGST2}. Throughout we assume $k \leq \frac{n}{2}$ and we consider sets $I$ modulo $n$, where we choose the representatives 
$1, \ldots, n$ for the equivalence classes modulo $n$. The Pl\"ucker coordinates $p_I$  can be indexed by $k$-subsets of $[n]=\{1, \ldots, n\}$. If $I$ consists of $k$ consecutive elements, modulo $n$, then
$I$ is called a \emph{consecutive $k$-subset} and 
$p_I$ a {\em consecutive Pl\"ucker coordinate}. These are frozen cluster variables or 
coefficients. Further, if $I$ is of the form $I=I_0\cup \{m\}$ where $I_0$ is a consecutive $(k-1)$-subset of $[n]$ and $m$ is any entry of $[n]\setminus I_0$, then $I$ is {\em almost consecutive}, and $p_I$ is an \emph{almost consecutive Pl\"ucker coordinate}.
For a tuple $J=\{i_1,\dots, i_{\ell}\}$ 
of $[n]$ and $\{a_1, \ldots, a_\ell\} = \{i_1, \ldots, i_\ell\}$ with $1\le a_1\le \dots\le a_{\ell}\le n$ 
we set 
\[
o(J)=o(i_1,\dots, i_{\ell}) = (a_1,\dots, a_{\ell}).
\]
With this notation, if $0\le s,\ell\le k$, then $p_{b_1,\dots, b_s,o(i_1,\dots, i_{\ell}),  
c_1,\dots, c_{k-\ell+a}}$ means \newline $p_{b_1,\dots, b_s,a_1,\dots, a_{\ell}, 
c_1,\dots, c_{k-\ell+a}}$. Further use the shorthand notation $[r]^\ell:=\{ r, r+1, \ldots, r+\ell-1\}$.

\begin{defi}\label{Def:Pluecker-frieze}
	The {\em Pl\"ucker frieze} of type $(k,n)$ is a $\ZZ \times \{1, 2, \ldots, n+k-1\}$-grid with entries given by the map
	\begin{eqnarray*}
		\varphi_{(k,n)} \colon \ZZ \times \{1, 2, \ldots, n+k-1\} & \to & \mathcal{A}(k,n)\\
		(r,m) & \mapsto & p_{o([r']^{k-1},m')},
	\end{eqnarray*}
	where $r'\in[n]$ is the reduction of $r$ modulo $n$	and $m'\in[n]$ is the 
	reduction of $m+r'-1$ modulo $n$.  
We denote the Pl\"ucker frieze by $\mathcal P_{(k,n)}$. 
	The {\em special Pl\"ucker frieze of type $(k,n)$}, denoted by $s\mathcal P_{(k,n)}$, 
	is the grid we get from $\mathcal P_{(k,n)}$ 
	by substituting the consecutive Pl\"ucker coordinates with $1$s. 
	\end{defi}
An illustration of the Pl\"ucker frieze $\mc{P}_{(k,n)}$ can be found in \cite[Figure 1]{BFGST2} and $s\mathcal{P}_{(3,6)}$ is in Example \ref{Ex:D4} below. The main Theorem of \cite[Theorem 3.3]{BFGST2} shows that the special Pl\"ucker frieze is a tame $SL_k$-frieze over 
\[ s\mc{A}(k,n)=\mc{A}(k,n)/\langle p_I-1: \text{ $p_I$ is a consecutive Pl\"ucker coordinate} \rangle \ .\]
Further let ${\bf x}=(x_1, \ldots, x_{m})$ be a cluster in $\mc{A}(k,n)$ and consider the map $\varepsilon_{{\bf x}}: \mc{A}(k,n) \xrightarrow{} \CC$ sending each element of ${\bf x}$ to $1$. The map $\varepsilon_{{\bf x}}$ is called \emph{specialization to $1$}. This yields tame integral friezes: 

\begin{Thm}[Corollary 3.11 of \cite{BFGST2}] \label{Thm:SLk-friezes-pluecker}
Let ${\bf x}=(x_1, \ldots, x_{m})$ be a cluster in $\mc{A}(k,n)$ and consider the specialization map $\varepsilon_{{\bf x}}$. Then $\varepsilon_{{\bf x}}(s\mc{P}_{(k,n)})$ is a tame integral $SL_k$-frieze of width $w=n-k-1$.
\end{Thm}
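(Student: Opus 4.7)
The plan is to reduce the statement to the already-established Theorem~3.3 of \cite{BFGST2} (that $s\mathcal{P}_{(k,n)}$ is a tame $SL_k$-frieze over $s\mathcal{A}(k,n)$) together with two auxiliary inputs: the Laurent/positivity phenomenon for Grassmannian cluster algebras, and a row-by-row analysis of the grid structure of $s\mathcal{P}_{(k,n)}$.

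First I would observe that the specialization map $\varepsilon_{\mathbf{x}}$ factors through $s\mathcal{A}(k,n)$. Since $\mathbf{x}$ is a cluster in the cluster algebra with coefficients $\mathcal{A}(k,n)$, every frozen variable appears in $\mathbf{x}$; the frozen variables are exactly the consecutive Plücker coordinates $p_{[r]^k}$. Hence $\varepsilon_{\mathbf{x}}(p_{[r]^k}) = 1$ for all $r$, and $\varepsilon_{\mathbf{x}}$ descends to a ring homomorphism $\overline{\varepsilon}_{\mathbf{x}}\colon s\mathcal{A}(k,n)\to \mathbb{C}$. Applying $\overline{\varepsilon}_{\mathbf{x}}$ entrywise to $s\mathcal{P}_{(k,n)}$ produces the array $\varepsilon_{\mathbf{x}}(s\mathcal{P}_{(k,n)})$.

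Next I would transfer the $SL_k$ and tameness conditions. By Theorem~3.3 of \cite{BFGST2}, every $k\times k$ diamond minor of $s\mathcal{P}_{(k,n)}$ equals $1$ in $s\mathcal{A}(k,n)$ and every $(k+1)\times(k+1)$ diamond minor equals $0$. Since these are polynomial identities among the entries, they are preserved by any ring homomorphism; applying $\overline{\varepsilon}_{\mathbf{x}}$ yields the same identities in $\mathbb{C}$, so $\varepsilon_{\mathbf{x}}(s\mathcal{P}_{(k,n)})$ is a tame $SL_k$-array.

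It then remains to verify integrality, positivity, and the correct boundary/width structure. For any Plücker coordinate $p_I$ with non-repeating $I$, $p_I$ is a cluster variable in $\mathcal{A}(k,n)$, so by the Laurent phenomenon together with positivity for Grassmannian cluster algebras (due to Lee--Schiffler, or equivalently from the categorification in Fact~\ref{Fact:CC}\eqref{Fact:CC-Frobenius} combined with Fu--Keller), $p_I$ expands as a Laurent polynomial with non-negative integer coefficients in $\mathbf{x}$; evaluating $\mathbf{x}=(1,\ldots,1)$ therefore yields a positive integer. For the structure of rows, I would inspect the definition $\varphi_{(k,n)}(r,m)=p_{o([r']^{k-1},m')}$: for $1\le m\le k-1$ the index $m'\in\{r',\ldots,r'+k-2\}$ coincides with an element of $[r']^{k-1}$, so the index multiset has a repeat and the Plücker coordinate is $0$; the rows $m=k$ and $m=n$ produce consecutive $k$-subsets, hence entries equal to $1$ after specialization; for $n+1\le m\le n+k-1$ one again gets repeats, giving $0$. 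The remaining rows $m=k+1,\ldots,n-1$ are the non-trivial positive integer entries, giving width $w=n-k-1$.

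The main obstacle is the positivity step: we need that every entry in the fundamental domain is actually a cluster variable of $\mathcal{A}(k,n)$ (which is the case for Plücker coordinates with non-repeating indices), and then that the positivity theorem for Grassmannian cluster algebras applies to guarantee that the specialization is a positive integer rather than merely a non-negative integer. The rest is essentially bookkeeping: tracking which indices $m$ give repeats, which give consecutive subsets, and checking that the diamond minor identities pass through $\overline{\varepsilon}_{\mathbf{x}}$.
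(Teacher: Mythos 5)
Your proposal is correct and follows essentially the same route the paper indicates: the survey gives no proof of its own but cites Corollary~3.11 of \cite{BFGST2}, whose argument is exactly your reduction to Theorem~3.3 (the frieze relations over $s\mc{A}(k,n)$, preserved under the ring homomorphism induced by $\varepsilon_{\bf x}$) combined with the Laurent phenomenon and positivity to get positive integer entries, plus the bookkeeping of repeated versus consecutive index sets for the boundary rows and width. The only point worth flagging is the convention that a ``cluster'' in $\mc{A}(k,n)$ here includes the frozen consecutive Pl\"ucker coordinates (as in Example~\ref{Ex:D4}), which is precisely what makes your factorization through $s\mc{A}(k,n)$ legitimate.
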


Friezes obtained by specializing a cluster to $1$ are called \emph{unitary friezes}.
For $\mc{A}(2,n)$ we have seen that this construction yields all (finitely many) tame integral $SL_2$-friezes, a.k.a.~CC-friezes, and hence all tame integral $SL_2$-friezes are unitary. \\
 However, for $k \geq 3$ the situation is much less tractable, even for the remaining finite types ($k=3$, $n \in \{ 6,7,8\}$) there are more $SL_3$-friezes than the ones obtained by Theorem \ref{Thm:SLk-friezes-pluecker}. These friezes are called \emph{non-unitary friezes} in the literature, and much less is known about them.
 
 \begin{Ex} 
 \label{Ex:D4}Consider the Grassmannian cluster algebra $\mc{A}(3,6)$. Then the specialized Pl\"ucker frieze is  $s\mathcal P_{(3,6)}$: 
\[
\xymatrix@C=-8pt@R=-1pt{
 && 0 && 0 && 0 && 0 && 0 && 0 && 0 \\
&0 && 0 && 0 && 0 && 0 && 0 && 0 && 0\\
  && 1 && 1 && 1 && 1 && 1 && 1 && 1 \\
&p_{146} && p_{125} && p_{236} && p_{134} && p_{245} && p_{356} && p_{146} && \cdots  \\
\cdots && p_{124} && p_{235} && p_{346} && p_{145} && p_{256} && p_{136} && p_{124} \\
& 1  && 1 && 1 && 1 && 1 && 1 && 1 && 1 \\
 && 0 && 0 && 0 && 0 && 0 && 0 && 0 \\
&0 && 0 && 0 && 0 && 0 && 0 && 0 && 0
}
\]
In $\mc{A}(3,6)$ a cluster is of size $4+6$ and is e.g. given by $p_{245}, p_{145}, p_{124}, p_{125}$ plus the six consecutive Pl\"ucker coordinates. Specializing the non-consecutive Pl\"ucker coordinates in this cluster also to $1$ yields the unitary integral $SL_3$-frieze in Fig.~\ref{Fig:SL3}. However, one can also specialize them to $2$ and get the non-unitary integral $SL_3$-frieze in Fig.~\ref{Fig:SL3}.
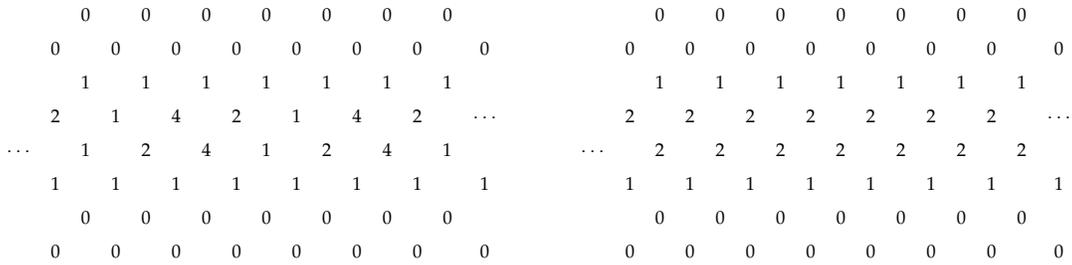
\begin{figure}[h!]
{\tiny
\begin{minipage}{0.4 \textwidth}
\xymatrix@=0.2em{
 && 0 && 0 && 0 && 0 && 0 && 0 && 0 \\
&0 && 0 && 0 && 0 && 0 && 0 && 0 && 0\\
  && 1 && 1 && 1 && 1 && 1 && 1 && 1 \\
&2 && 1 && 4 && 2 && 1 && 4 && 2 && \cdots  \\
\cdots && 1 && 2 && 4 &&  1 && 2 && 4 && 1 \\
& 1  && 1 && 1 && 1 && 1 && 1 && 1 && 1 \\
 && 0 && 0 && 0 && 0 && 0 && 0 && 0 \\
&0 && 0 && 0 && 0 && 0 && 0 && 0 && 0
}
\end{minipage}
\begin{minipage}{0.051 \textwidth}
$\phantom{a}$
\end{minipage}
\begin{minipage}{0.4 \textwidth}
\xymatrix@=0.2em{
 && 0 && 0 && 0 && 0 && 0 && 0 && 0 \\
&0 && 0 && 0 && 0 && 0 && 0 && 0 && 0\\
  && 1 && 1 && 1 && 1 && 1 && 1 && 1 \\
&2 && 2 && 2 && 2 &&2 && 2&& 2 && \cdots  \\
\cdots && 2 && 2 && 2 && 2 && 2 && 2 && 2 \\
& 1  && 1 && 1 && 1 && 1 && 1 && 1 && 1 \\
 && 0 && 0 && 0 && 0 && 0 && 0 && 0 \\
&0 && 0 && 0 && 0 && 0 && 0 && 0 && 0
}
\end{minipage}
}
\caption{A unitary $SL_3$-frieze (left) and a non-unitary $SL_3$-frieze (right).} \label{Fig:SL3}
\end{figure}
Note that the AR-quiver of the stable category $\underline{\mc{C}(3,6)}$ is of type $D_4$.
 \end{Ex}
 
We give an overview over some results about the finite type friezes below, in particular about their enumeration. One can again use  Grassmannian cluster categories $\mc{C}(k,n)$ and the relation to so-called mesh friezes, which are obtained from the AR-quiver of $\mc{C}(k,n)$:

\begin{defi}[cf. Def.~4.6 of \cite{BFGST2}]
Let $\mc{C}(k,n)$ be of finite type. 
A {\em mesh frieze} $\mathcal{F}_{k,n}$ for  $\mc{C}(k,n)$ is a collection of positive integers, one for each indecomposable object in $\mc{C}(k,n)$ such that $\mathcal{F}_{k,n}(P)=1$ for every indecomposable projective-injective $P$ and such 
that all mesh relations on the AR-quiver of $\mc{C}(k,n)$ evaluate to 1. This means, that
for any AR-sequence 
\begin{equation}\label{ar-triangle}
0\to A\to \bigoplus_{i=1}^\ell B_i \to C \to 0,
\end{equation}
where the $B_i$ are indecomposable, and $\ell \geq 1$, we have
\[
	\mathcal{F}_{k,n}(A)\mathcal{F}_{k,n}(C) = \prod_{i=1}^\ell \mathcal{F}_{k,n}(B_i) + 1.
\]
\end{defi}

Note that in \cite{ARS} certain friezes were defined for Cartan matrices associated to acyclic quivers, also cf.~\cite{KellerScherotzke}, where these friezes were interpreted in a categorical framework. For Dynkin type these friezes correspond to the stable versions $\underline{\mc{F}_{k,n}}$ of mesh-friezes defined above: $\underline{\mc{F}_{2,n}}$ yields an $A_{n-3}$ Dynkin type frieze, and $\underline{\mc{F}_{3,n}}$ for $n=6,7,8$ yields type $D_4$, $E_6$, $E_8$, respectively.

It is shown in  \cite[Cor.~4.16]{BFGST2} that there is a bijection between
$$\{ \text{mesh friezes for $\mathcal{C}(3,n)$} \} \longleftrightarrow \{ \text{ tame integral }SL_3-\text{friezes of width } n-4 \} \ ,$$
for $n \in \{6,7,8\}$. 

\begin{Rmk}
The numbers of the finite type mesh friezes are as follows:\\
\begin{center}
\begin{tabular}{|c|c|c|c|c|}
\hline
Type & $\mc{C}(2,n)$ & $\mc{C}(3,6)$ & $\mc{C}(3,7)$  & $\mc{C}(3,8)$ \\
\hhline{|=|=|=|=|=|}
unitary & $C_{n-2}$ & 50 & 833 &  25080 \\
\hline
non-unitary & 0 & 1 & 35 &  1872 \\
\hline
Total & $C_{n-2}$ & 51 &  868 & 26952 \\
\hline
\end{tabular}
\end{center}
For type $\mc{C}(2,n)$ mesh friezes are just the CC-friezes, which are all unitary, and their number has been confirmed by Conway and Coxeter. The type $\mc{C}(3,6)$ friezes have been studied as $2$-friezes by Morier-Genoud, Ovsienko, and Tabachnikov in \cite{MGOT12}, where also the number $51$ was proven. The numbers for $\mc{C}(3,7)$ and $\mc{C}(3,8)$ were conjectured by Fontaine and Plamondon in \cite{FP16} (for $\mc{C}(3,7)$ also see \cite{Propp,MGOT12}). For $\mc{C}(3,7)$ the conjecture was confirmed by Cuntz and Plamondon in the Appendix of \cite{BFGST2}. Both proofs use arithmetic properties of the entries of the friezes. Finally, a  preprint by Zhang \cite{Zhang-finite} confirms the number for $\mc{C}(3,8)$ with diophantine geometry methods, using results by Gunawan and Muller  about the finiteness of these friezes \cite{GunawanMuller, Muller-finiteD} and frieze points \cite{dSGHL}.
\end{Rmk}

\begin{Rmk}
In the cases  $\mc{C}(3,n)$, where $n \in \{6,7,8\}$, one can explain the occurrence of all non-unitary friezes from the non-unitary frieze of Example \ref{Ex:D4} by applying Iyama--Yoshino reduction on $\mathcal{C}(3,n)$, see \cite[Section 5]{BFGST2}. The key idea here is to perform Iyama--Yoshino reduction (see \cite{IY08}, also \cite{BIRS}) at a suitable indecomposable module on the Frobenius category $\mathcal{C}(3,n)$ and observing that one again obtains a mesh frieze of smaller Dynkin type this way, see \cite[Prop 5.3]{BFGST2} for an ad-hoc proof and \cite[Section 8]{FMP} for a more conceptual approach.
\end{Rmk}

\begin{Rmk}
Iyama--Yoshino reduction has been studied for various categories, in particular for $\mc{C}(k,n)$, see \cite{FMP}. It is a purely categorical construction, but for $\mc{C}(2,n)$ it has a nice analogue in the combinatorial model: Let $T$ be a triangulation of $P_n$, and let $(i,j) \in T$ be a diagonal. Then IY-reduction with respect to the corresponding module $M_{ij} \in \mc{C}(2,n)$ yields two smaller CC-friezes that are ``glued'' together at the entry $M_{ij}$, see \cite[Example 8.8]{FMP}. These are precisely obtained by cutting $P_n$ along $(i,j)$ and computing the friezes for the two resulting smaller polygons. However, if one reduced with respect to a diagonal $(i,j) \not \in T$, then one obtains two smaller friezes with coefficients. \\
It would also be interesting to study the reverse operation.
\end{Rmk}

In particular, for infinite type cases, non-unitary friezes are not well understood yet. Therefore we post the following

\begin{Qu}
\begin{enumerate}[(1)]
\item Can we find a representation theoretic explanation of non-unitary $SL_k$-friezes from Grassmannians $\mc{C}(k,n)$ for $k \geq  3$ and $n \geq  9$? In particular, are there always tame integral $SL_k$-friezes whose nontrivial entries are all in $\ZZ_{>1}$?
\item More from a quiver perspective: consider an initial cluster of Pl\"ucker coordinates  in $\mc{A}(k,n)$. When is it possible  to specialize all its values to integers $>1$ such that all the other Pl\"ucker coordinates are also positive integers?\footnote{An example for this is the cluster for $\mc{A}(3,6)$ of Example \ref{Ex:D4}, which can be specialized to $2$.} And can one find representation theoretic interpretation for this phenomenon?
\end{enumerate}
\end{Qu}

\subsection{Infinite frieze structures} \label{Sub:infinite}

In this section we consider a generalization for the correspondences in the table in Fig.~\ref{Fig:Correspondence} starting from the  natural question: what happens when $n \xrightarrow{} \infty$, that is, when one considers triangulations of an $\infty$-gon? 

We  sketch how construct a suitable Frobenius categorification $\mc{C}(2,\infty)$ of the (completed) $\infty$-gon from $\mc{C}(2,n)$ below, following \cite{ACFGS}. Then we obtain frieze structures from an adapted cluster character, which will appear in detail in \cite{EsentepeFaber}.

Consider the \emph{$\infty$-gon}, that is, a disc with a discrete set of marked points on the boundary admitting a unique two-sided accumulation point. The \emph {completed $\infty$-gon} is obtained by adding the unique accumulation point as a marked point.  Usually, the marked points are labeled by $\ZZ$, and the accumulation point will be denoted by $\infty$. Similar as for the finite polygon case, we call $(a,b)$ an {\em arc of the completed $\infty$-gon} in $\ZZ \cup \{\infty\}$, such that $a < b$. There are two types of arcs: $(a,b)$ is {\em finite} if $a,b \in \ZZ$, and {\em infinite} if $b = \infty$. \\
One defines a \emph{triangulation of the (completed) $\infty$-gon} as a maximal set of non-crossing arcs (where crossing is defined as in the finite case). Triangulations and mutations have been studied independently by \cite{BaurGratz} and \cite{CanakciFelikson}. There are five types of triangulations of the completed $\infty$-gon, \cite[Thm.~1.12]{BaurGratz} - we will only be discussing so-called \emph{fan triangulations} here: a fan triangulation $T$ of the completed $\infty$-gon has a unique point $a \in \ZZ$ such that there are infinitely many arcs of the form $(a,b) \in T$ with $b \in \ZZ$ and $b>a$ as well as infinitely many arcs of the form $(c,a) \in T$ with $c \in \ZZ$ and $c <a$, and $(a,\infty)\in T$.

\begin{Rmk}
We want to point out that $\mc{C}(2, \infty)$ is one possible construction of a category with infinity type $A$ cluster structure. It is building on the foundational paper \cite{HJ-infinite}, where a triangulated version has been defined for the non-completed $\infty$-gon. Moreover, there are several other approaches: \cite{PY21} construct a triangulated category for the $\infty$-gon allowing for several accumulation points via Igusa--Todorov's \cite{ITcyclic} approach, also see \cite{MohammadiRockZaffalon} for more work in this direction. Fisher \cite{F16} constructs  a completion of the Holm--J{\o}rgensen category. These approaches are compared in \cite{ACFGS}. Moreover,  recently Canakc\i, Kalck, and Pressland  \cite{CKP} constructed an extriangulated category and show that the weak cluster tilting subcategories of this extriangulated category are indeed in bijection with all the triangulations of the completed $\infty$-gon.
\end{Rmk}

\begin{Rmk}
We will not be focussing on the cluster \emph{algebra} structure here, but there is also ongoing work in this direction:
Let 
$$\mc{A}(k,\infty)=\CC[p_I: I \subseteq \ZZ, |I|=k] / I_P $$
where $I_P$ is the ideal of Pl\"ucker relations, which are defined as follows: for any two subsets $J,J' \subseteq \ZZ$ with $|J| = k+1$ and $|J'| = k-1$, with $J = \{j_0, \ldots, j_{k}\}$ and $j_0 < \ldots < j_{k}$ we get a Pl\"ucker relation
\begin{eqnarray}
	\sum_{l=0}^{k}(-1)^lp_{J' \cup \{j_l\}}p_{J \setminus \{j_l\}} \ .
\end{eqnarray}
Then $I_P$ is the ideal generated by these (infinitely many) Pl\"ucker relations. Here the $p_I$ are also called Pl\"ucker coordinates and we also speak of non-crossing $p_I$ and $p_J$ if the sets $I$ and $J$ satisfy the same conditions as in the finite case.
In \cite{GratzGrabowski}, and its Appendix by Groechenig, it has been shown that $\mc{A}(k,\infty)$ can be equipped with a cluster structure and indeed this ring can be interpreted as the homogeneous coordinate ring of an infinite version of a Grassmannian under a generalization of the Pl\"ucker embedding. Also see a recent discussion of these rings as colimits in \cite{GratzKorff}.
\end{Rmk}

Fixing $k \geq 2$, we want to obtain a category $\mc{C}(k,\infty)$ from the JKS category 
\[\mc{C}(k,n)=\CM_{C_n}(\CC\llbracket x,y \rrbracket/\langle x^k - y^{n-k} \rangle) \ .\] Consider the action of the multiplicative group $\mathbb{G}_m$ on $\CC[x,y]$, with 
$$x \mapsto \zeta x \text{ and } y \mapsto \zeta^{-1}y \ .$$
Take as semi-invariant $x^k$ (since $y^{n-k}$ tends to be ``small'' for $n \gg 0$) and consider the category
\[ \CM_{\mathbb{G}_m}(\CC[x,y]/\langle x^k \rangle)  \]
the category of $\mathbb{G}_m$-equivariant $\CM$-modules over $R_k:=\CC[x,y]/\langle x^k \rangle$. One can show that this is equivalent to the category
\[ \mc{C}(k, \infty):=\CM_{\ZZ}(R_k) \ , \]
where the $\ZZ$-grading is given by $\deg x=1$ and $\deg y=-1$. Then $\mc{C}(k, \infty)$ is called the \emph{Grassmannian cluster category of infinite rank (of type $k$)}. Note that in this context, all modules, ideals, etc. are considered w.r.t. the grading.
In particular, we want to remark here, that although we have to consider graded modules, the rings $R_k$ are commutative hypersurface rings (in particular they are Gorenstein), which allows us to use tools from commutative algebra: in particular, for hypersurface rings one can use Eisenbud's matrix factorizations \cite{EisenbudMF} to explicitly write down $\CM$-modules.\footnote{However, under the $\ZZ$-grading, the degree $0$ part of $R_k$ is not a field, and also the grading is not bounded -- which makes it hard to handle for computer algebra systems.}

\begin{Thm}[see Thm. 3.7, 3.9, and 4.1 of \cite{ACFGS-IMRN} for precise statements] 
The generically free\footnote{This condition is sometimes also called ``free on the punctured spectrum''.} rank $1$ modules over $R_k$ are explicitly identified with monomial ideals in $R_k$, which yields a $1-1$ correspondence with the Pl\"ucker coordinates $p_I$. Moreover, two Pl\"ucker coordinates $p_I$ and $p_J$ in $R_k$ are compatible if and only if for the corresponding modules $M_I, M_J$ in $\mc{C}(k,\infty)$ satisfy $\Ext^1_{\mc{C}(k, \infty)}(M_I, M_J)=0$.
\end{Thm}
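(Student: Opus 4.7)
The plan is to transport the rim/staircase description of rank one MCM modules from the finite Grassmannian cluster category $\mc{C}(k,n)$ (\`a la Jensen--King--Su and Baur--King--Marsh) to the $\ZZ$-graded hypersurface $R_k=\CC[x,y]/\langle x^k\rangle$, exploiting that $R_k$ is Gorenstein and that every graded MCM module admits a two-periodic matrix factorization of $x^k$. Everything should then reduce to explicit combinatorics of monomial ideals and of their Koszul-type resolutions.

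For the identification with monomial ideals, I would first note that any generically free rank one graded MCM module $M$ over the Gorenstein ring $R_k$ embeds into its total graded ring of fractions as a graded fractional ideal (up to a degree shift), and reflexivity allows one to replace $M$ by an honest graded ideal of $R_k$. The $\ZZ$-grading with $\deg x=1$, $\deg y=-1$ then forces this ideal to be \emph{monomial}, because each homogeneous component of $R_k$ is spanned by a single monomial $x^a y^b$ with $0\leq a<k$. I would then enumerate such ideals: their minimal generators form a staircase $\{x^{a_1}y^{b_1},\ldots,x^{a_s}y^{b_s}\}$ with $0\leq a_1<\cdots<a_s<k$ and $b_1>\cdots>b_s\geq 0$, and the generic-freeness-of-rank-one condition pins down exactly $k$ steps along the anti-diagonal direction. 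Reading those steps as anti-diagonal coordinates produces a $k$-subset $I=\{i_1<\cdots<i_k\}\subseteq\ZZ$, yielding the advertised bijection with Pl\"ucker indices $p_I$.

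For the Ext-vanishing statement, I would compute $\Ext^1_{\mc{C}(k,\infty)}(M_I,M_J)$ by applying $\Hom_{R_k}(-,M_J)$ to the two-periodic matrix factorization resolution of $M_I$ and taking cohomology. The resulting space is an explicit cokernel of graded homomorphisms modulo those factoring through a graded free module, and a direct combinatorial analysis -- mirroring the local crossing argument of JKS/BKM in the finite case -- shows that this cokernel vanishes precisely when the staircases of $I$ and $J$ do not interleave. Interleaving of staircases translates exactly into crossing of the $k$-subsets $I,J\subseteq\ZZ$, which gives the claimed equivalence between compatibility of $p_I,p_J$ and the vanishing of $\Ext^1(M_I,M_J)$.

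The main obstacle is the unbounded $\ZZ$-grading of $R_k$. Unlike the $\mu_n$-equivariant situation, the invariant subring $R_k^{(0)}=\CC[xy]$ is not a field, the grading is not bounded below, and $R_k$ is neither local nor complete, so the usual Nakayama-style lifts and finiteness arguments available for $\mc{C}(k,n)$ do not apply directly. One must therefore verify by hand that the monomial ideals described above remain MCM over the \emph{graded} ring $R_k$, that their matrix factorizations are genuinely two-periodic in the graded sense, and that the Hom-spaces used in the $\Ext^1$ computation are finite-dimensional in each degree so the combinatorial argument remains valid. These finiteness and well-definedness checks, rather than the combinatorics itself, are where the bulk of the technical work in \cite{ACFGS-IMRN} is concentrated, and they are precisely what allow the finite-$n$ picture to survive the passage $n\to\infty$.
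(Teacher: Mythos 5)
The first thing to note is that the survey does not prove this statement at all: it is quoted with a pointer to Theorems 3.7, 3.9 and 4.1 of \cite{ACFGS-IMRN}, so there is no in-paper proof to compare against, and your outline can only be measured against that reference. In broad outline your plan (explicit monomial models for the generically free rank-one graded MCM modules, graded matrix factorizations of $x^k$, and an explicit computation showing $\Ext^1$-vanishing is equivalent to non-crossing) is of the same general type as the cited work, and your closing paragraph correctly identifies the graded finiteness issues. But two of your supporting steps are wrong as written, and the combinatorial heart of the second assertion is asserted rather than argued.

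Concretely: the homogeneous components of $R_k$ are \emph{not} one-dimensional -- the degree-$0$ component is spanned by $1, xy, \dots, (xy)^{k-1}$, and every component of degree $d\le 0$ is $k$-dimensional -- so your stated reason that the grading forces graded ideals to be monomial fails for every $k\ge 2$. The conclusion is still true, but for a different reason: since $(xy)^k=0$ in $R_k$, every nonzero homogeneous element is a monomial times a unit of the form $c\bigl(1+xy(\cdots)\bigr)$, hence generates the same ideal as a monomial; you need this observation (or a normal-form argument for graded matrix factorizations) in place of the dimension count. Next, generic freeness of rank one does not ``pin down exactly $k$ steps'': it is equivalent to the localization at the minimal prime $(x)$ being free of rank one, i.e.\ to the staircase containing a pure power of $y$ (that is, $a_1=0$), and the number of minimal generators can be anything from $1$ to $k$ -- the free module $R_k$ itself occurs (it corresponds to a consecutive $k$-subset), as does, e.g., $(y,x^2)\subset R_3$. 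The bijection with $k$-subsets has to be set up by recording, for each $j=0,\dots,k-1$, the minimal exponent $c_j$ with $x^jy^{c_j}$ in the ideal (so $c_0\ge\cdots\ge c_{k-1}$, repetitions allowed) together with the grading shift; your ``exactly $k$ steps'' formulation would exclude most of the modules in the classification. Finally, the claim that the cokernel computing $\Ext^1_{\mc{C}(k,\infty)}(M_I,M_J)$ vanishes exactly when the subsets are non-crossing is the substance of Theorem 4.1 of \cite{ACFGS-IMRN} and is not obtained by simply ``mirroring'' the finite case: in this infinite-rank setting the category is not known a priori to be stably 2-Calabi--Yau, so even the symmetry of the vanishing condition in $I$ and $J$ cannot be assumed, and the interleaving-versus-crossing case analysis must actually be carried out.
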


Below we will see that for $k=2$, there is a complete classification of the indecomposable objects in $\mc{C}(2,\infty)$. However for $k \geq 3$ the rings $R_k$ are of wild Cohen--Macaulay type and not much is known about their module theory.

\begin{Qu}
Let  $k \geq 3$. Can we describe generically free modules of higher rank in $\mc{C}(k,\infty)$ that correspond to cluster variables of higher degree in $\mc{A}(k,\infty)$? Is there a combinatorial way to describe a cluster structure with Postnikov diagrams, similar to the finite case in \cite{Scott06}?
\end{Qu}

Now let us focus on the case $k=2$, that is, the ring $R_2=\CC[x,y]/\langle x^2 \rangle$ with the $\ZZ$-grading as above. This ring (the ungraded version) has appeared as $A_\infty$-singularity in singularity theory \cite{Siersma} and as one of the two examples of hypersurface rings of countable $\CM$-type \cite{BGS} (the other one being the $D_\infty$-singularity $\CC[x,y]/ \langle x^2y \rangle$). Moreover, Buchweitz, Greuel, and Schreyer have classified all indecomposable $\CM$-modules. Combining this with the results on Pl\"ucker coordinates and types of arcs in the $\infty$-gon, one obtains a one-to-one correspondence between arcs in the $\infty$-gon and $\CM$-modules in $\mc{C}(2,\infty)$.

One defines cluster tilting subcategories of the Frobenius category $\mc{C}(2,n)$ as functorially finite maximally rigid subcategories. It can be shown, see \cite{PY21, ACFGS}, that these correspond precisely to the fountain triangulations of the $\infty$-gon. Further, the other types of triangulations can be shown to be maximal almost rigid subcategories of $\mc{C}(2,\infty)$ \cite[Theorem 5.5]{ACFGS}. Moreover, mutation can be defined in the $\infty$-gon, cf.~\cite{BaurGratz}, and one can show that it corresponds to a categorical mutation, cf.~\cite{IY08}, in $\mc{C}(2,\infty)$, see \cite[Theorem 5.9]{ACFGS}. We get the following analogue of Table \ref{Fig:Correspondence}:

\begin{Thm}
There are one-to-one correspondences between arcs in a completed $\infty$-gon and indecomposable objects in the Frobenius category $\mc{C}(2,\infty)$ as indicated in the table in Fig.~\ref{Fig:Correspondence-infinite}.
\begin{figure}[h!]
\begin{tabular}{|c|c|}
\hline
{\bf Combinatorial model }& {\bf Frobenius category} \\
\hhline{|=|=|}
arcs in completed $\infty$-gon
&
indecomposable objects of $\mc{C}(2,\infty)$
\\
\hline
\begin{minipage}{0.45 \textwidth}
    \centering
    \begin{tikzpicture}[scale=3,cap=round,>=latex]
    \draw[black](270:0.5) .. controls (270:0.2) and (200:0.2) .. (200:0.5);
    \node at (270:0.65) {$-l+1$};
    \node at (200:0.75) {$-l-j$};
    \draw[black] (0,0) circle(0.5cm);
    \node at (90:0.65) {$\infty$};
	\draw (90:0.5) node[fill=black,circle,inner sep=0.065cm] {} circle (0.02cm);	        
 \end{tikzpicture}

finite arcs: $(-l-j, -l+1)$ \\
(including the boundary arcs $(-l,-l+1)$
\end{minipage}
 & \begin{minipage}{0.45 \textwidth}
 \centering
 ideals $\langle x,y^j \rangle(l)$ in $R_2$ \\
 (including projective-injectives $R(l)$)
 \end{minipage}\\
\hline
\begin{minipage}{0.45 \textwidth}
   \centering
    \begin{tikzpicture}[scale=3,cap=round,>=latex]
    \draw[black](300:0.5) .. controls (300:0.2) and (90:0.2) .. (90:0.5);
    \node at (300:0.65) {$-l$};
    \draw[black] (0,0) circle(0.5cm);
    \node at (90:0.65) {$\infty$};
	\draw (90:0.5) node[fill=black,circle,inner sep=0.065cm] {} circle (0.01cm);	        
 \end{tikzpicture} \\
infinite arcs $(-l, \infty)$
\end{minipage} & the modules $\CC[y](l)$ \\
\hline
\begin{minipage}{0.45 \textwidth}
\centering
 
 \begin{tikzpicture}[scale=3,cap=round,>=latex]

  \foreach \a in {275, 280, 285, 290, 295,300, 305, 310, 315, 320, 325, 330, 335, 340,345, 350, 355, 360,365,0,5,10,15,20,25,30,35,40,45,50,55,60} {
    \draw[red] (270:0.5) .. controls (270:0.35) and (\a:0.35) .. (\a:0.5);
  }
  
    \foreach \a in {265,260,255,250,245,240,235,230,225,220,215,210,205,200,195,190,185,180,175,170,165,160,155,150,145,140,135,130,125} {
    \draw[red] (270:0.5) .. controls (270:0.35) and (\a:0.35) .. (\a:0.5);
  }
  
\draw[red](270:0.5) .. controls (270:0.2) and (90:0.2) .. (90:0.5);

  \draw[thick,dotted,color=red] (65:0.4) arc (65:80:0.4);
    \draw[thick,dotted, color=red] (120:0.4) arc (120:95:0.4);

  \draw[red] (0,0) circle(0.5cm);

  \node at (90:0.65) {$\infty$};
  \draw (90:0.5) node[fill=black,circle,inner sep=0.065cm] {} circle (0.015cm);	  

\end{tikzpicture}

fountain triangulations  ${\color{red} T}$
\end{minipage} & cluster tilting subcategories $\mc{T}$ of $\mc{C}(2,\infty)$ \\
\hline
\begin{minipage}{0.45 \textwidth}
admissible mutation as defined in \cite[Def.~2.1]{BaurGratz}  
\end{minipage} & categorical mutation as defined in \cite{ACFGS} \\
\hline
\end{tabular}
\caption{The infinite correspondences.} \label{Fig:Correspondence-infinite}
\end{figure}
\end{Thm}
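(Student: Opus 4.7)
The theorem packages four distinct bijections that together build the combinatorial-categorical dictionary for $\mc{C}(2,\infty)$. The plan is to treat the rows of the table sequentially, since each one feeds into the next, and each may be read as a summary of results that have already been established in the literature, in particular in \cite{ACFGS-IMRN, ACFGS, PY21, BaurGratz, IY08}.

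\emph{Step 1: classification of indecomposable objects.} I would start from the Buchweitz--Greuel--Schreyer classification \cite{BGS}: up to isomorphism, the indecomposable $\CM$-modules over the hypersurface $R_2 = \CC\llbracket x, y \rrbracket / \langle x^2 \rangle$ are $R_2$, the monomial ideals $\langle x, y^j \rangle$ for $j \geq 1$, and the quotient $\CC\llbracket y \rrbracket = R_2/\langle x \rangle$; their matrix factorizations are easy to write down explicitly. Lifting this to the $\ZZ$-graded/equivariant category $\mc{C}(2,\infty) = \CM_{\ZZ}(R_2)$ is routine: each indecomposable admits a unique graded structure up to a degree shift $(l)$ with $l \in \ZZ$, yielding the three families $R(l)$, $\langle x, y^j \rangle (l)$, and $\CC[y](l)$. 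To obtain the arc bijection one then reads off the module data combinatorially: $R(l)$ is sent to the boundary arc $(-l, -l+1)$, $\langle x, y^j \rangle (l)$ to the finite arc $(-l-j, -l+1)$, and $\CC[y](l)$ to the infinite arc $(-l, \infty)$. This matching is precisely the one furnished by the theorem cited from \cite{ACFGS-IMRN}.

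\emph{Step 2: cluster tilting subcategories.} By the same cited theorem, $\Ext^1_{\mc{C}(2,\infty)}(M_I, M_J) = 0$ if and only if the arcs associated to $I$ and $J$ do not cross. Therefore a set of arcs is rigid exactly when it is non-crossing, and the maximal such sets are the triangulations of the completed $\infty$-gon. The extra ingredient needed to pass from ``maximally rigid'' to ``cluster tilting'' (as defined through functorial finiteness, following Iyama--Yoshino \cite{IY08}) is the existence of left and right $\mc{T}$-approximations of every object; this is the step that picks out precisely the fountain triangulations among the five isomorphism classes of triangulations. I would verify this along the lines of \cite[Theorem 5.5]{ACFGS} (and the analogous triangulated statement of P\'alu--Yang \cite{PY21}): the fountain point provides both an ascending and a descending sequence of arcs that produce the needed approximations, while the other triangulation types fail this criterion at the accumulation point(s).

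\emph{Step 3: mutation.} Finally I would match admissible mutation of a fountain triangulation, in the sense of \cite[Def.~2.1]{BaurGratz}, with Iyama--Yoshino mutation of the corresponding cluster tilting subcategory. Given Steps 1 and 2, the only thing to check is that flipping an arc $(a,b)$ inside its unique quadrilateral in $T$ replaces the module $M_{(a,b)}$ with the cone/cocone of the minimal right/left $\mc{T}/M_{(a,b)}$-approximation of $M_{(a,b)}$. The computation is the direct infinite analogue of the $n$-gon case and recovers \cite[Theorem 5.9]{ACFGS}.

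\emph{Main obstacle.} The delicate step is Step 2: the verification of functorial finiteness. Rigidity and maximality of a triangulation are handed to us by the Ext-characterization of compatibility, but an infinite rigid family need not admit approximations, and exactly this is what distinguishes the fountain triangulations from the other four types. Making this distinction precise (and handling the accumulation point carefully in both the existence and minimality of approximations) is the technical heart of the theorem.
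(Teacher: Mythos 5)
Your proposal is correct and follows essentially the same route as the paper, which likewise assembles the theorem from the Buchweitz--Greuel--Schreyer classification of $\CM$-modules over $R_2$, the arc/module and Ext-compatibility results of \cite{ACFGS-IMRN}, the identification of cluster tilting subcategories with fountain triangulations in \cite{PY21, ACFGS}, and the mutation comparison of \cite[Theorem 5.9]{ACFGS}; the survey offers no argument beyond these citations, so your outline matches it step for step, including the emphasis on functorial finiteness as the point distinguishing fountain triangulations. (One small correction: \cite{PY21} is by Paquette and Y{\i}ld{\i}r{\i}m, not Palu--Yang.)
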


Finally, since the topic of this article is friezes, we want to construct infinite analogues of CC-friezes, that is, some kind of nonperiodic integral $SL_2$-friezes. The idea is to also use a cluster character on $\mc{C}(2,\infty)$, specialize to $1$, and obtain the frieze relations from the properties of this cluster character, similar to Prop.~\ref{Prop:frieze-from-CC}. In \cite{EsentepeFaber} we extend the cluster character of Paquette and Yildirim \cite{PY21} to this case and show how to get frieze structures from fountain triangulations of the completed $\infty$-gon. However, the existence of the fountain point that has infinitely many arcs of the triangulation emanating complicates the generalization and we only obtain so-called \emph{half-friezes}. 

\begin{Ex}
Consider the following fountain triangulation of the $\infty$-gon:
\[ T=\{ (0,2n), n \in \ZZ \backslash \{0, \pm 1 \} \} \cup \{ (2n,2n+2), n \in \ZZ\} \cup \{(n,n+1), n \in \ZZ \} \ . \]
For a schematic picture of the triangulation see Fig.~\ref{Fig:fan-infinity}.
\begin{figure}[h!]
\begin{tikzpicture}[scale=3,cap=round,>=latex]

  \foreach \a in {  290,  310,    330,   350, 10,30,50} {
    \draw[red] (270:0.5) .. controls (270:0.35) and (\a:0.35) .. (\a:0.5);
  }

    \foreach \a in {  300,  320,    340,   360, 20,40} {
    \draw[red] (\a-10:0.5) .. controls (\a-10:0.45) and (\a+10:0.35) .. (\a+10:0.5);
  }
  
  \foreach \a in {270, 280,290,300,310,320,330,340,350,360,10,20,30,40,50,60} {
    \node[fill=black,circle,inner sep=0.02cm] at (\a:0.5) {};
  }
  
    \foreach \a in {250,230,210,190,170,150,130} {
    \draw[red] (270:0.5) .. controls (270:0.35) and (\a:0.35) .. (\a:0.5);
  }
  
    \foreach \a in {240, 220, 200, 180, 160, 140} {
    \draw[red] (\a-10:0.5) .. controls (\a-10:0.45) and (\a+10:0.35) .. (\a+10:0.5);
  }
  
  \foreach \a in {260, 250,230,210,190,170,150,130,240, 220, 200, 180, 160, 140} {
    \node[fill=black,circle,inner sep=0.02cm] at (\a:0.5) {};
  }
  
\draw[red](270:0.5) .. controls (270:0.2) and (90:0.2) .. (90:0.5);

  \draw[thick,dotted,color=red] (65:0.4) arc (65:80:0.4);
    \draw[thick,dotted, color=red] (120:0.4) arc (120:95:0.4);

  \draw[red] (0,0) circle(0.5cm);

  \node at (90:0.65) {$\infty$};
  \draw (90:0.5) node[fill=black,circle,inner sep=0.065cm] {} circle (0.015cm);	  

\end{tikzpicture}
\caption{A fountain triangulation of $\infty$-gon.}  \label{Fig:fan-infinity}
\end{figure}
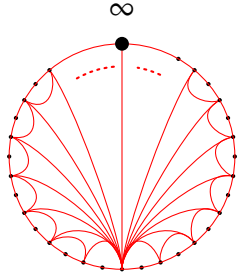

Using the cluster character, the corresponding half-frieze is
\[
{\tiny
\begin{tikzcd}[row sep=0.35em, column sep=0.75em]
\cdots  && 0 &&0 && 0 && 0 && 0&& 0  &&0&&  0&& 0 && 0 && 0 &&  0 && \cdots \\ 
\cdots &1 &&  1 &&1 && 1&& 1 && 1 &&1 &&1 && 1&&1&&1 && 1 &&\cdots   \\  
\cdots & & 1 && 4 && 1 && 3 && 1&& \phantom{m}  &&\rot{1}&&  3&& 1&& 4 && 1 &&  4 && \cdots \\ 
\cdots & 3 &&  3 && 3 && 2 && 2 &&  &&  && \rot{2} &&2 && 3 && 3 && 3 && \cdots   \\ 
\cdots&& 8 && 2 && 5 && 1 &&  &&  &&  && \rot{1} && 5&&2 &&8&& 2 && \cdots   \\ 
\cdots& 5&& 5 && 3 && 2 &&  &&  &&  &&  && \rot{2} &&  3 && 5 && 5 &&  \cdots  \\ 
\cdots&& 3 && 7  && 1 &&  &&  && &&  &&  && \rot{1} && 7 && 3 && 12 &&  \cdots\\ 
\cdots&7 && 4 &&2&&  &&  &&  &&  && &&  && \rot{2} && 4 &&7&&  \cdots   \\ 
\cdots &&  9 && 1 && && && && &&  &&  &&  &&\rot{1} && 9 && 4 && \cdots \\ 
\cdots &5 && 2&& &&  &&  &&  &&  && && &&  && \rot{2} && 5&&\cdots     \\ 
\end{tikzcd}
}
\]

Similarly as in the case of CC-friezes the first nontrivial row is obtained by counting triangles incident to each marked point on the circle (excluding the fountain point). Also note that we can look at the red diagonal in the frieze, if we set all values $\neq 1$ to $0$, we obtain the periodic sequence $1,0,1,0, \ldots$. This corresponds to the so-called index sequence of a Penrose tiling of $\RR^2$ and is studied in more detail in \cite{EsentepeFaber}.

\end{Ex}

We end this section with some related questions:

\begin{Qu}
\begin{enumerate}[(1)]
\item Can one also use $\CM$-modules on the $D_\infty$-singularity to categorify the triangulations of a punctured $\infty$-gon? And does this yield infinite friezes of type $D_\infty$?
\item In a different direction: What about cluster tilting subcategories for higher $\mc{C}(k,\infty)$ for $k \geq 3$?
For the categories $\mc{C}(k,\infty)$ can one obtain  an $\infty$-analogue of Pl\"ucker friezes and hence infinite tame integral $SL_k$-friezes?
\end{enumerate}
\end{Qu}

\subsection{Other frieze types} \label{Sub:other-repthy}
There are several other constructions yielding friezes, also coming from representation theory. For example, friezes of type $D$ from punctured disks were investigated by Baur, Marsh \cite{BM09},  infinite periodic friezes of type $\widetilde A$ from annuli by Baur, Canakci, Jacobsen, Kulkarni, Todorov \cite{BCJKT}, infinite periodic friezes of type $\widetilde D$ from twice punctured disks by Baur, Bittmann, Gunawan, Todorov, Y{\i}ld{\i}r{\i}m \cite{BBGTY}, super friezes of type $A$ by Canakci, Fedele, Garcia Elsener, and Serhiyenko \cite{CFGS}, and many more. \\
Instead of discussing them here (which would be out of scope of the current article) we refer the reader again to Anna Felikson's compendium (cf.~\cite{Felikson-frieze-portal}) of research papers (and more!) on friezes:

 \url{https://www.maths.dur.ac.uk/users/anna.felikson/Projects/frieze/frieze-res.html}. 

\subsection*{Acknowledgements} I want to thank the organizers of the ICRA 2024 Shanghai for the invitation to write a proceedings article, in particular Bernhard Keller for his patience. Further, thanks go to the anonymous referees for a very careful reading and valuable suggestions to improve the article, and to \"Ozg\"ur Esentepe and Zahra Nazemian for comments on a previous version. I also want to thank my wonderful collaborators for many discussions about friezes - in particular Karin Baur and Gordana Todorov, who introduced me to this research topic at the very first WINART workshop in 2016.
%


\newcommand{\etalchar}[1]{$^{#1}$}
\def\cprime{$'$}

\end{document}